\documentclass[preprint,12pt]{elsarticle}

\usepackage{amssymb}
\usepackage{algorithmic}
\usepackage{amsmath}
\usepackage{caption}
\usepackage{mathtools}
\usepackage{hyperref}
\usepackage{cleveref}
\usepackage{float}
\usepackage{amsthm}
\newtheorem{theorem}{Theorem}[section]
\newtheorem{claim}{Claim}
\usepackage{algorithm}

\usepackage{tabularx}
\usepackage{amssymb}
\usepackage{graphicx}
\usepackage{algorithm}
\usepackage{makecell}
\usepackage{float}
\usepackage{booktabs}
\newtheorem{lemma}{Lemma}
\renewcommand{\vec}[1]{\mathbf{#1}}
\newcommand{\mat}[1]{\mathbf{#1}}

\newtheorem{remark}{Remark}[section]

\hypersetup{
    colorlinks=false,
    pdfnewwindow=true
}

\begin{document}

\begin{frontmatter}




\title{An adaptive split--combine Gaussian mixture filter for nonlinear and multimodal state estimation}

\author[label1]{San Kim}
\author[label2,label3]{Won Chang}
\author[label4,label5]{Daniel B. Forger}
\author{Dae Wook Kim\corref{cor1}\fnref{label1,label6}}
\ead{daewook@kaist.ac.kr}
\cortext[cor1]{Corresponding author}

\affiliation[label1]{organization={Department of Brain and Cognitive Sciences, KAIST},
            city={Daejeon},
            postcode={34141},
            country={Republic\! of\! Korea}}

\affiliation[label2]{organization={Department of Statistics, Seoul National University},
            city={Seoul},
            postcode={08826},
            country={Republic\! of\! Korea}}

\affiliation[label3]{organization={The Institute for Data Innovation in Science, Seoul National University},
            city={Seoul},
            postcode={08826},
            country={Republic\! of\! Korea}}           

\affiliation[label4]{organization={Department of Mathematics, University of Michigan},
            city={Ann Arbor},
            postcode={48109},
            state={MI},
            country={USA}}
            
\affiliation[label5]{organization={Department of Computational Medicine and Bioinformatics},
            city={Ann Arbor},
            postcode={48109},
            state={MI},
            country={USA}}

\affiliation[label6]{organization={KI for Human Augmentation Convergence, KAIST},
            city={Daejeon},
            postcode={34141},
            country={Republic\! of\! Korea}}

\begin{abstract}
Filtering combines model predictions with measurements to estimate the probability density function (PDF) of a system state over time. The PDF often becomes highly asymmetric and even multimodal in nonlinear systems with oscillatory or chaotic dynamics. Such non-Gaussian features violate the single-Gaussian assumption underlying Kalman-type filters. To address this problem, Gaussian mixture filtering has been proposed. However, accurately propagating mixture components and adaptively adjusting their number and weights over time remain open challenges. Here, we develop an adaptive split--combine Gaussian mixture filter (AMF) that estimates the time evolution of asymmetric and multimodal PDFs by adaptively splitting and combining Gaussian particles without auxiliary online numerical optimization. Notably, the proposed splitting method guarantees a reduction in variance along a target level-set-point direction of a Gaussian particle. This enables accurate and efficient propagation of particles. We show that AMF consistently outperforms various baseline filters across diverse benchmarks, including single and coupled slow–fast Van der Pol oscillators and the Lorenz attractor. We also propose a parallel implementation of AMF, allowing high-fidelity PDF estimation with practical computational cost.
\end{abstract}



\begin{keyword}
Nonlinear filtering; Gaussian mixture modeling; Uncertainty propagation; Slow-fast oscillators; Chaotic systems; Data assimilation


\end{keyword}

\end{frontmatter}



\section{Introduction}

Nonlinear dynamics, such as oscillations and chaos, are ubiquitous across physical and biological systems, including atmospheric convection \cite{lorenz2017deterministic}, neuronal activity \cite{hodgkin1952measurement}, cardiac pacemaking \cite{van1928lxxii}, and circadian rhythms \cite{jewett1998refinement}. The full state of these systems, however, often remains inaccessible to direct measurement despite advances in experimental techniques. For example, gene and hormone expression levels in deep brain regions cannot be directly measured in practice \cite{stone2020computational, dijk2020novel, kim2019systems}.

Due to this limitation, estimating unobservable states in stochastic systems is essential. One approach to this problem is recursive Bayesian estimation, i.e., Bayesian filtering \cite{sarkka2023bayesian}. This framework iteratively alternates between two steps: a $time$-$update$ step, which propagates the state between consecutive measurements, and a $measurement$-$update$ step, which corrects the prediction from the time-update step using the latest measurement. Although the optimal Bayesian filter solution can be represented conceptually, its exact computation is generally intractable for nonlinear systems \cite{arasaratnam2009cubature, arasaratnam2010cubature}. Thus, nonlinear filters rely on numerical approximations that can be broadly categorized into two classes. In the first class, the posterior density is constrained to take an $a$ $priori$ form, typically Gaussian. Continuous--discrete Kalman filters, such as the continuous--discrete cubature Kalman filter (CD-CKF) \cite{arasaratnam2010cubature}, belong to this class. In the second class, no assumptions are made about the form of the posterior density, and a representative example is the particle filter (PF) \cite{gordon1993novel, doucet2001introduction}.

Although these methods have been widely used in many applications to date \cite{ristic2003beyond, fearnhead2018particle, khodarahmi2023review}, their accuracy and computational efficiency deteriorate when estimating highly nonlinear dynamic states \cite{sarkka2023bayesian}. For example, CD-CKF employs a 1.5--order It\^o--Taylor expansion of the stochastic differential equation (SDE) to perform the time-update step, and thus its accuracy strongly depends on the system’s local nonlinearity: when the system is highly nonlinear, the probability density function (PDF) of the state variables may exhibit non-Gaussian features such as asymmetry (e.g., skewness) and multimodality \cite{arasaratnam2010cubature}. While non-Gaussian state propagation can be tracked through sequential importance sampling within PF, its performance is constrained by a trade-off between the degree of nonlinearity and the number of samples \cite{daum2003curse, snyder2008obstacles}. These drawbacks become more pronounced when measurements are temporally sparse or provide little information for updating the system states \cite{terejanu2008novel}, particularly in biological oscillator data assimilation \cite{kim2023wearable}.

To circumvent these challenges, Gaussian mixture Filters (GMFs) have been proposed \cite{sorenson1971recursive}. They track the posterior density by propagating a mixture of Gaussian components with non-zero covariance, unlike PF, which uses Dirac delta components. In the GMF paradigm, determining an appropriate number of Gaussian components and their weights is crucial for achieving both accuracy and computational efficiency \cite{shao2019semisupervised}. Specifically, more and finer components are required when the posterior density is complex, whereas fewer and coarser components suffice when it resembles a Gaussian distribution. One approach to this issue is to continuously assess non-Gaussianity and dynamically adjust the number and weights of Gaussian components during filtering \cite{zhang2022gaussian}. However, most existing methods have been developed only for the measurement-update step, not for the time-update step \cite{leong2013gaussian, tuggle2018automated, raitoharju2019partitioned}. Moreover, during the time-update step, the component weights are generally assumed to be fixed \cite{alspach2003nonlinear, ito2002gaussian, chen2000mixture, faubel2009split, Faubel2010Further, Volker2011versatile}. This assumption holds when the covariances of the state variables are sufficiently small, such that the linearization of the system provides a reasonable representation of the dynamics around the mean. However, it easily becomes problematic for highly nonlinear systems or when measurements are noisy or infrequent.

The fixed-weight problem during state propagation has been addressed in only a few studies using optimization-based formulations \cite{terejanu2008novel, terejanu2008uncertainty, singla2008gaussian, Horwood2011Adaptive, Wang2020Adaptive, Li2021Efficient}. For example, Terejanu et al. \cite{terejanu2008novel} incorporated an auxiliary online optimization into the time-update step, where the Gaussian mixture approximation was constrained to satisfy the Fokker--Planck equation governing the time evolution of the state PDF. However, this approach can only be implemented with a fixed number of Gaussian components. Moreover, the numerical optimization problem must be solved at each infinitesimal time step during state propagation between consecutive measurements, which is computationally expensive. It also requires the computation of spatial partial derivatives and numerical integration, which may lead to stability issues.

One possible remedy is to extend adaptive particle split--combine techniques developed for uncertainty propagation \cite{Vittaldev2016spacecraft, Jones2024physics, Dunik2018Directional, Gutierrez2024Class, Kulik2026nonlinearity, wang2023asymmetric} to filtering. Specifically, Wang and Forger \cite{wang2023asymmetric} recently proposed the asymmetric particle population density method (APPD) to simulate macroscopic behaviors of biological oscillators. In this framework, individual particles, treated as Gaussian components in GMFs, are propagated by solving ordinary differential equations (ODEs) reformulated from the underlying Fokker--Planck equation. The particles are then adaptively split, combined, and reweighted. Although the method appears to be a reasonable candidate for filtering applications, there remains room for further improvement. Its splitting algorithm is based on eigen-decomposition, which divides a particle along directions with the largest variance. However, this splitting strategy does not guarantee that the resulting particles satisfy the local linear approximation along a target direction, which is required for accurate state propagation of individual particles \cite{faubel2009split, Faubel2010Further, Horwood2011Adaptive, wang2023asymmetric, Demars2013entropy, Yu2014Adaptive, berchet2021adaptive}. Furthermore, although the combining algorithm based on the proximity of particle means is computationally efficient, it can distort the resulting mixture distribution due to differences in particle covariances \cite{wang2023asymmetric, runnalls2007kullback}.

Here, we propose a filtering-oriented split--combine scheme that dynamically adjusts the number and weights of particles. Our adaptive scheme theoretically guarantees a halving of the variance along a target direction while avoiding computationally demanding online numerical optimization and matrix inversion. Furthermore, it performs particle combining by assessing the distributional discrepancy between a candidate combined particle and the corresponding mixture of particles using a computationally efficient upper bound on the Kullback--Leibler (KL) divergence \cite{runnalls2007kullback}. Using this split--combine scheme, we developed an adaptive split–combine Gaussian mixture filter (AMF) for state estimation of nonlinear systems with asymmetric and multimodal state distributions. We compared AMF with several baselines, including recently developed Kalman-type filters and GMFs, across benchmark problems consisting of single and coupled slow--fast Van der Pol oscillators and the Lorenz attractor. AMF consistently outperformed the baselines. Importantly, AMF outperformed the original APPD-based GMF while using fewer particles during filtering. These results indicate that our adaptive split--combine scheme plays a critical role in nonlinear filtering. Lastly, we proposed a parallelized implementation of AMF and showed that, due to its suitability for parallel computation, it dramatically reduces the computational cost of highly nonlinear state-space estimation.

The remainder of this paper is organized as follows. Sec.~\ref{sec:Method} introduces AMF, with the problem formulation in Sec.~\ref{sec:probf} and an overview in Sec.~\ref{sec:overview}, followed by a detailed description in Secs.~\ref{sec:timeupdate}--~\ref{sec:breview}. In Sec.~\ref{sec:Results}, we present numerical analyses that demonstrate improvement of AMF over baselines. Sec.~\ref{sec:Conclusion} discusses the implications of our findings and future research directions.

\section{Methods} \label{sec:Method}

\subsection{Problem Formulation} \label{sec:probf}

A continuous--discrete dynamic system consists of two equations: the process equation and the measurement equation. The process equation describes the continuous-time evolution of the system states in the form of an It\^o--type SDE:

\begin{equation} \label{eq:sde}
    d\mathbf{x}_t = \mathbf{v}(\mathbf{x}_t)dt + \sqrt{\mathbf{K}}d\mathbf{W}_t
\end{equation}

\noindent where $\mathbf{x}_t$ denotes the $d$--dimensional state of the system at time $t$, $\mathbf{v} : \mathbb{R}^d \rightarrow \mathbb{R}^d$ is a drift function, $\mathbf{W}_t$ is a $d$--dimensional standard Wiener process, and $\mathbf{K}$ is a $d \times d$ positive semi-definite process noise matrix where $\mathbf{K} = \sqrt{\mathbf{K}}\sqrt{\mathbf{K}}^T$. The system states are observed indirectly through noisy measurements at discrete times, and their relationship is modeled by the measurement equation:

\begin{equation} \label{eq:meas}
    \mathbf{z}_{t_k} = h(\mathbf{x}_{t_{k}}) + \mathbf{r}
\end{equation}

\noindent where $\mathbf{z}_{t_{k}}$ is the $n$--dimensional measurements at discrete time instances $t_k$ for $k=1, 2, \dots$, $h : \mathbb{R}^d \rightarrow \mathbb{R}^{n}$ is the measurement function, and $\mathbf{r}$ is assumed to be a zero-mean Gaussian white noise with a covariance matrix $\mathbf{R}$.

The purpose of filtering is to compute the posterior density $p(\mathbf{x}_{t_k}|\mathbf{z}_{t_{1:k}})$, where $\mathbf{z}_{t_{1:k}}=\{\mathbf{z}_{t_1}, \cdots, \mathbf{z}_{t_k}\}$. This can be calculated recursively in two steps: the time-update step and the measurement-update step. In the time-update step, the predicted state density $p(\mathbf{x}_{t_k}|\mathbf{z}_{t_{1:k-1}})$ is obtained by solving the process equation \eqref{eq:sde} with the initial condition $p(\mathbf{x}_{t_{k-1}}|\mathbf{z}_{t_{1:k-1}})$, which is the previous posterior density. In the measurement-update step, the propagated density is updated at each measurement instant using Bayes’ theorem with the measurement equation \eqref{eq:meas}: 

\begin{equation}
\underbrace{p(\mathbf{x}_{t_{k}}|\mathbf{z}_{t_{1:k}})}_{\text{Posterior}} = \frac{1}{c_k}\underbrace{p(\mathbf{x}_{t_{k}}|\mathbf{z}_{t_{1:k-1}})}_{\text{Prior}}\underbrace{p(\mathbf{z}_{t_{k}}|\mathbf{x}_{t_k})}_{\text{Likelihood}}
\end{equation}
where $c_k=p(\mathbf{z}_{t_{k}}|\mathbf{z}_{t_{1:k-1}})=\int_{\mathbb{R}^d} p(\mathbf{x}_{t_{k}}|\mathbf{z}_{t_{1:k-1}})p(\mathbf{z}_{t_{k}}|\mathbf{x}_{t_k}) d\mathbf{x}_{t_{k}}$.

Although a recursive approach for the continuous--discrete filtering problem can be described conceptually as above, exact solutions exist only for a few special cases, such as when the drift function and the measurement function are linear \cite{arasaratnam2010cubature}. Except for these restricted cases, approximate solutions must be computed using numerical methods. However, existing numerical filters often cannot provide either sufficient accuracy or acceptable computational cost when estimating asymmetric and multimodal state densities over time. To overcome this limitation, we developed AMF that can accurately and efficiently track the non-Gaussian posterior density through an adaptive particle propagation--split--combine scheme in parallel. The next subsection presents an overview of the proposed method.

\subsection{Overview of the Method} \label{sec:overview}
Highly nonlinear systems exhibit non-Gaussian state distributions, and the degree of this non-Gaussianity changes over time. For example, in limit-cycle systems with fast–slow manifolds, state PDFs become elongated and bent along the orbit in the fast manifold, but this non-Gaussianity diminishes when passing through the slow manifold. Therefore, detecting non-Gaussianity during state propagation is essential for nonlinear state-space estimation. This inspired us to introduce a particle propagation--split--combine scheme in the time-update step of AMF (Fig. \ref{fig:fig1}). This scheme evaluates the validity of the Gaussian assumption (i.e., the local linear approximation) at each infinitesimal time step and adaptively splits or combines Gaussian particles to approximate the non-Gaussian PDF when required.

\begin{figure}[!htbp]
    \centering
    \includegraphics[width=\linewidth]{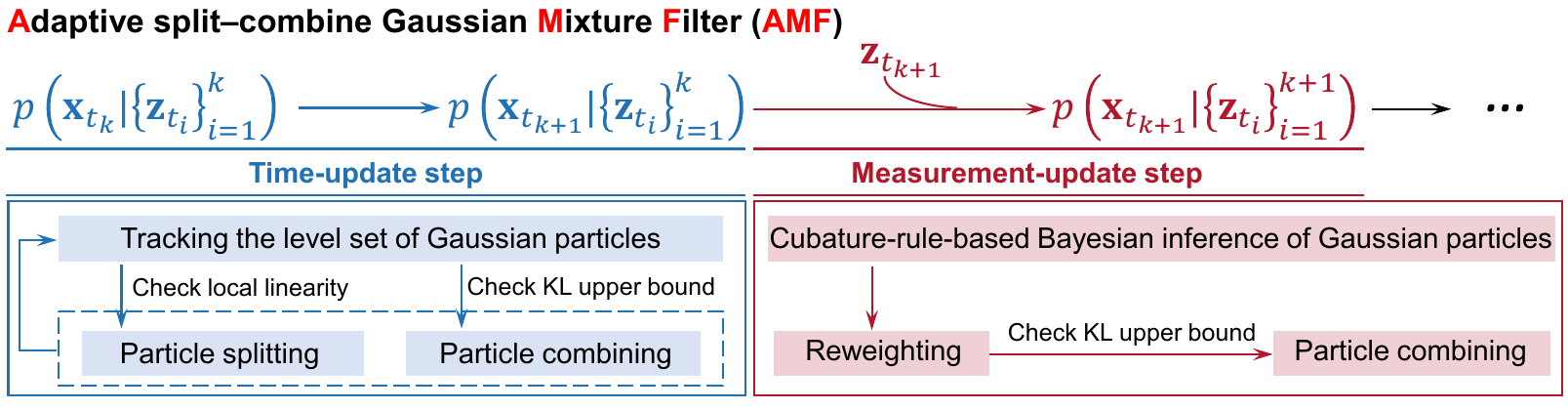}
    \caption{\textbf{Overview of the adaptive split–combine Gaussian mixture filter.} In the time-update step, single-particle propagation is performed by tracking its level set. Particles violating the local linear approximation are split at each infinitesimal time step without auxiliary online numerical optimization, guaranteeing halved variance along the target level-set-point direction. Particles are combined when the combined distribution is sufficiently close to their weighted sum in terms of the Kullback--Leibler divergence. In the measurement-update step, the mean, covariance, and weight of each particle are updated by numerical integration based on the cubature rule. See Secs.~\ref{sec:timeupdate} and~\ref{sec:measurementupdate} for details.}
    \label{fig:fig1}
\end{figure}

Specifically, AMF approximates a complex-shaped PDF with a mixture of Gaussian particles. The propagation of each particle is determined by solving ODEs along its level set, under a local linear approximation of the Fokker–Planck equation \cite{wang2021level}. During propagation, the quality of the local linear approximation is continuously evaluated without computing a Laplacian, which helps to avoid numerical stability issues in nonlinear state estimation \cite{wang2023asymmetric}. If the quality is not satisfactory, the particle is split into three. Notably, the splitting is performed without solving auxiliary numerical optimization problems and guarantees a halving of the variance along the target level-set-point direction. It thereby accelerates satisfaction of the local linear approximation assumption for each particle without requiring excessive splitting, which can accumulate numerical errors. Particles are also combined into a single Gaussian by evaluating information loss using an efficiently computable upper bound on the KL divergence \cite{runnalls2007kullback}. 

This scheme dramatically reduces the number of particles and thus the computational cost compared with APPD that inspired our approach, while improving accuracy (see below for details). Moreover, when implementing the propagation--split--combine scheme, the weight of each particle is automatically updated without any additional computation, unlike existing GMFs \cite{terejanu2008novel, Horwood2011Adaptive, Wang2020Adaptive, Li2021Efficient}.

Once the time-update step has been completed, the Gaussian particles and their mixture weights are updated using the latest measurement. First, each particle’s Gaussian density is corrected by propagating cubature points through the measurement function and assimilating them into the latest measurement, as in the CD-CKF measurement-update step \cite{arasaratnam2010cubature}. Next, each weight is updated by scaling it with the corresponding marginal likelihood approximated using a cubature rule. Finally, the updated particles undergo splitting and combining, as done in the time-update step.

To more efficiently implement AMF, we provide its parallel implementation. Note that this is possible because iterative operations, such as particle propagation and splitting, in AMF can be performed independently for each particle. In the following subsections, we provide detailed descriptions of the time-update step (Sec.~\ref{sec:timeupdate}), the measurement-update step (Sec.~\ref{sec:measurementupdate}), and the implementation details including parallel computing (Sec.~\ref{sec:imple_details}).

\subsection{Time-update step} \label{sec:timeupdate}
This section describes the time-update step of AMF, which consists of three parts: propagation of individual particles, splitting, and combining. These parts are presented in Sec.~\ref{sec:spp}, Sec.~\ref{sec:ps}, and Sec.~\ref{sec:pc}, respectively.

\subsubsection{Single-particle propagation} \label{sec:spp}
This section presents the algorithm for propagating a single Gaussian particle by solving the process equation \eqref{eq:sde} under the local linear approximation, as first proposed in \cite{wang2021level}. Unlike the CD-CKF, which directly solves the SDE numerically, this method converts it into the corresponding Fokker–Planck equation:

\begin{equation} \label{eq:fpe}
    \frac{\partial u}{\partial t} = \frac{1}{2}\nabla \cdot (\mathbf{K}\nabla u) - \nabla \cdot (\mathbf{v}u)
\end{equation}

\noindent where $u(\mathbf{x},t)$ denotes the time evolution of the state PDF. Next, the level-set equation of the Fokker--Planck equation is solved under the local linear approximation: $\mathbf{v(x)} \approx \mathbf{Jx}$ where $\mathbf{J}$ is the Jacobian matrix. Specifically, consider the level set $\mathcal{L}(t)$ of a single particle, initialized with the Gaussian distribution centered at zero without loss of generality:

\begin{equation} \label{eq:level_set}
    \mathcal{L}(t) := \{ \mathbf{x} \in \mathbb{R}^d \mid F(\mathbf{x},t):=\frac{u(\mathbf{x},t)}{u(\mathbf{0},t)} = c \},
\end{equation}

\noindent where $0 < c < 1$ is some fixed scalar constant and

\begin{equation} \label{eq:gaussian_initial}
    u(\mathbf{x},0) = \frac{1}{\sqrt{(2\pi)^d \det(\boldsymbol{\Sigma})}} \exp\left(-\frac{1}{2}\mathbf{x}^T \boldsymbol{\Sigma}^{-1} \mathbf{x}\right).
\end{equation}

Then, a velocity of the level set for $\mathcal{L}(0)$ is defined by a velocity field $\mathbf{v}_L$ satisfying the level-set equation:
\begin{equation} \label{eq:level_set_eq}
    \frac{dF}{dt}+ \mathbf{v}_L \cdot \nabla F =0.
\end{equation}
The velocity field $\mathbf{v}_L$ can be explicitly calculated under the local linear approximation (see Lemma 1 in \cite{wang2021level} for the detailed proof):
\begin{equation} \label{eq:level_set_velocity}
    \mathbf{v}_L \approx \mathbf{J}\mathbf{x} + \frac{1}{2}\mathbf{K}\boldsymbol{\Sigma}^{-1}\mathbf{x}.
\end{equation}

Using the velocity field $\mathbf{v}_L$ in \eqref{eq:level_set_velocity}, we trace the evolution of the column vectors of $\mathbf{M}$, where the covariance matrix $\mathbf{\Sigma}=\mathbf{M}\mathbf{M}^T$, as they lie on the same level set, and tracking this level set is equivalent to tracking the propagation of a Gaussian particle (see~\cref{app:app1} and \cite{wang2021level}). Specifically, let $\Sigma(0)=\mathbf{M}(0)\mathbf{M}(0)^T$ be the covariance matrix of an initial Gaussian density, and $\mathbf{M}(0)=\begin{bmatrix} \mathbf{x}_1(0) & \cdots & \mathbf{x}_d(0)\end{bmatrix}$, where $\mathbf{x}_i(0)$ is the $i$-th column vector of $\mathbf{M}(0)$. Then, $\mathbf{x}_i(0)$, representing a point on the level set, evolves according to the velocity field $\mathbf{v}_L$ in \eqref{eq:level_set_velocity}, which can be expressed as follows:

\begin{align} \label{eq:level_set_eq_x}
    \frac{d\mathbf{x}_i}{dt} &\approx \mathbf{J}\mathbf{x}_i + \frac{1}{2}\mathbf{K}\boldsymbol{\Sigma}^{-1}\mathbf{x}_i \approx \mathbf{v}(\bar{\mathbf{x}} + \mathbf{x}_i) -  \mathbf{v}_a(\bar{\mathbf{x}}, \mathbf{M}) + \frac{1}{2}\mathbf{K}\mathbf{M}^{-T}\mathbf{e}_i
\end{align}
where $\mathbf{e}_i$ is the unit vector with all entries equal to $0$ except for the $i$-th entry, $\bar{\mathbf{x}}$ is the center location of the particle, and  

\begin{align}
\mathbf{v}_a(\bar{\mathbf{x}}, \mathbf{M}) := \frac{1}{2d} \sum_{i=1}^{d} \left( \mathbf{v}(\bar{\mathbf{x}} + \mathbf{x}_i) + \mathbf{v}(\bar{\mathbf{x}} - \mathbf{x}_i) \right).
\end{align}
Note that the approximation in \eqref{eq:level_set_eq_x} allows us to avoid Jacobian computation. Thus, we adopt this form for simulating particle propagation. The matrix shorthand form of \eqref{eq:level_set_eq_x} is given by

\begin{equation} \label{eq:M_ode_avg}
    \frac{d\mathbf{M}}{dt} = \mathbf{v}(\bar{\mathbf{x}} + \mathbf{M}) - \mathbf{v}_a + \frac{1}{2}\mathbf{K}\mathbf{M}^{-T},
\end{equation}
and the averaged velocity of the center is given by 
\begin{equation} \label{eq:mean_vel}
    \frac{d\bar{\mathbf{x}}}{dt} = \mathbf{v}_a(\bar{\mathbf{x}}, \mathbf{M})
\end{equation}
where the matrix-vector additions are defined entrywise. Note that, in order to track the propagation of a single point $\mathbf{x}_i$ on the level set, the other points $\mathbf{x}_j$ as well as the center location $\bar{\mathbf{x}}$ of the Gaussian particle must also be considered. Hence, all points on the level set must be updated simultaneously by concatenating $\mathbf{M}$ and $\bar{\mathbf{x}}$ into a variable of $d \times (d+1)$ dimension, and solving it with any ODE solver. The single-particle update algorithm is illustrated in Algorithm \ref{alg:single_particle_update}. Note that individual Gaussian particles can be updated independently, which enables parallel computation and thus dramatically reduces computational time (see Sec.~\ref{sec:imple_details} for details).

\begin{algorithm}[hbt!]
\caption{Single-Particle Propagation}
\label{alg:single_particle_update}
\begin{algorithmic}[1]
\REQUIRE The center $\bar{\mathbf{x}}$ and the covariance square root $\mathbf{M}$ of the Gaussian particle at the previous time step.
\STATE Update $\bar{\mathbf{x}}$ and $\mathbf{M}$ by passing them as the state variables to an ODE solver with the derivative defined by \eqref{eq:M_ode_avg} and \eqref{eq:mean_vel}.
\RETURN The updated center and covariance square root
\end{algorithmic}
\end{algorithm}

\subsubsection{Particle splitting} \label{sec:ps}
The update of Gaussian particles relies on the local linear approximation of the drift term, as described above. However, this approximation becomes invalid when the size of a Gaussian particle grows as a result of continuous noise accumulation, or when the Gaussian structure is distorted by nonlinear dynamics, such as oscillations. To address this issue, we develop a particle splitting method inspired by \cite{wang2023asymmetric}. Specifically, we define the following relative error for the off-center points:

\begin{equation} \label{eq:relative_error}
    \varepsilon = \frac{\left\| \frac{1}{2}(\mathbf{v}(\bar{\mathbf{x}}+2\Delta\mathbf{x}^{\hat{\imath}}) - \mathbf{v}(\bar{\mathbf{x}})) - (\mathbf{v}(\bar{\mathbf{x}}+\Delta\mathbf{x}^{\hat{\imath}}) - \mathbf{v}(\bar{\mathbf{x}})) \right\|}{\left\| \mathbf{v}(\bar{\mathbf{x}}) \right\|}
\end{equation}
where $\bar{\mathbf{x}}$ is the center of the particle, and $\Delta\mathbf{x}^{\hat{\imath}}$ is the offset of the off-center point in direction $\hat{\imath}$. Using the relative error \eqref{eq:relative_error}, the soundness of the linear approximation is examined. Once the approximation is no longer valid, we need to reduce the particle covariance to shrink its effective support. This is achieved by splitting the particle into three thinner particles if the error exceeds a threshold  $\epsilon_{\text{threshold}}$ (see below for details). The selection of the threshold introduces a trade-off between computational cost and accuracy. Unless otherwise specified, $\epsilon_{\text{threshold}}$ is set to $0.05$, following \cite{wang2023asymmetric}.

Note that we use \eqref{eq:relative_error} instead of the criterion proposed in \cite{berchet2021adaptive} to avoid computing the Laplacian, thereby preserving the advantages of Laplacian-free approaches in single-particle propagation. This choice also improves numerical stability by eliminating Laplacian and Jacobian computations that may become unstable for near-singular covariance matrices. The numerator in \eqref{eq:relative_error} represents the sum of all higher-order ($\geq 2$) terms arising from the Taylor expansion of the drift function $\mathbf{v}$, scaled by a constant factor (see~\cref{app:app2}).

When determining whether particle splitting is required, we consider the errors computed by setting $\Delta\mathbf{x}^{\hat{\imath}}=\mathbf{x}_i$, the $i$-th column vector of $\mathbf{M}$. This allows us to assess the soundness of the approximation in \eqref{eq:level_set_eq_x}, which determines the numerical errors generated when solving \eqref{eq:M_ode_avg} and \eqref{eq:mean_vel}. Once the error exceeds a threshold, we split the particle into three thinner particles, each with half the variance in the direction of $\mathbf{x}_i$. Specifically, the covariance matrix $\boldsymbol{\Sigma}_{\text{split}}=\mathbf{N}\mathbf{N}^T$ of each child particle is computed by subtracting the appropriate orthogonal projections as follows:
\begin{align} \label{eq:new_cov_factor}
    \mathbf{N}_j &= \frac{1}{\sqrt{2}}\mathbf{x}_{j,\parallel_i} + \mathbf{x}_{j,\bot_i} \nonumber \\
    &= (\mathbf{x}_{j,\parallel_i} + \mathbf{x}_{j,\bot_i}) - \left(1-\frac{1}{\sqrt{2}}\right)\mathbf{x}_{j,\parallel_i} \nonumber \\
    &= \mathbf{x}_j - \left(1-\frac{1}{\sqrt{2}}\right) \frac{\langle \mathbf{x}_i, \mathbf{x}_j \rangle}{\langle \mathbf{x}_i, \mathbf{x}_i \rangle} \mathbf{x}_i, \quad 1 \le j \le d
\end{align}
where $\mathbf{N}_j$ is the $j$-th column of $\mathbf{N}$, $\mathbf{x}_{j,\parallel_i}$ and $\mathbf{x}_{j,\bot_i}$ denote the components of $\mathbf{x}_j$ parallel and perpendicular to $\mathbf{x}_i$, respectively, and $\langle \cdot, \cdot \rangle$ denotes the inner product. 

We then assign the central child particle a weight of $1-2w$, placing it at the same location as the parent particle. The two remaining child particles are assigned weights of $w$ each and are positioned at $\pm a$ relative to the center. With this configuration, we aim to find $w$ and $a$ such that the weighted sum of the child particles accurately represents the parent particle by solving the following optimization problem:
\begin{multline} \label{eq:full_split_opt}
    \min_{a,w}\max_{\mathbf{x}} \left| K(\mathbf{x} \mid \mathbf{0}, \boldsymbol{\Sigma}) \right. \\
    \left. - \left( (1-2w)K(\mathbf{x} \mid \mathbf{0}, \boldsymbol{\Sigma}_{\text{split}}) + wK(\mathbf{x} \mid -a\mathbf{u}_i, \boldsymbol{\Sigma}_{\text{split}}) + wK(\mathbf{x} \mid a\mathbf{u}_i, \boldsymbol{\Sigma}_{\text{split}}) \right) \right|
\end{multline}
where $K(\mathbf{x}\mid \boldsymbol{\mu}, \boldsymbol{\Sigma})$ denotes the PDF of the $d$--dimensional Gaussian distribution $\mathcal{N}(\boldsymbol{\mu},\boldsymbol{\Sigma})$, $\mathbf{0}\in\mathbb{R}^d$ is the zero vector, and $\mathbf{u}_i=\mathbf{x}_i/\|\mathbf{x}_i\|$ is the unit vector in the direction of $\mathbf{x}_i$. If $\mathbf{x}_i$ is aligned with any principal axis (i.e., an eigenvector) of the particle's covariance matrix, the multi-dimensional optimization problem \eqref{eq:full_split_opt} is reduced to $1$--dimensional problem that is independent of the remaining dimensions of the state space:
\begin{multline} \label{eq:split_opt} 
    \min_{a,w}\max_{x} \left| K(x \mid 0, \sigma^2) \right. \\
    \left. - \left( (1-2w)K(x \mid 0, \frac{\sigma^2}{2}) + wK(x \mid -a, \frac{\sigma^2}{2}) + wK(x \mid a, \frac{\sigma^2}{2}) \right) \right|
\end{multline}
where $K(x \mid \mu, \sigma^2)$ denotes the PDF of the $1$--dimensional Gaussian distribution with mean $\mu$ and variance $\sigma^2$. This problem can be solved numerically, yielding the optimal parameter values $a = 1.03332\sigma$ and $w = 0.21921$, as presented in \cite{wang2023asymmetric}.

However, this choice of $a$ and $w$ is typically suboptimal because the target level-set-point direction $\mathbf{x}_i$ is generally not aligned with any principal axis (i.e., an eigenvector) of the parent particle's covariance matrix. Due to this challenge, previous work splits the parent particle along a principal axis (rather than directly along $\mathbf{x}_i$) and adopts the values of $a$ and $w$ obtained by solving \eqref{eq:split_opt}, although there is no guarantee that this effectively reduces the linearization error \cite{faubel2009split, Faubel2010Further, Horwood2011Adaptive, wang2023asymmetric, Demars2013entropy, Yu2014Adaptive, berchet2021adaptive}. As a result, it may trigger unnecessary splitting and cause the number of particles to grow rapidly, leading to prohibitive computational costs and highly accumulated numerical errors in the optimization problem.

To tackle this limitation, we develop a particle splitting method that guarantees a reduction in variance along a vector $\mathbf{h}$ from the particle mean to an arbitrary level-set point, while still using the optimal parameter values derived from \eqref{eq:split_opt}. The key idea is to transform the Gaussian distribution into a new coordinate system where $\mathbf{h}$ is statistically uncorrelated with the other directions, allowing the optimal parameter values in \eqref{eq:split_opt} to be applied directly (Fig. \ref{fig:fig2}). After splitting, the resulting child distributions are transformed back to the original space. Specifically, without loss of generality, we consider a centered Gaussian distribution $\mathbf{x} \sim \mathcal{N}(\mathbf{0}, \mathbf{\Sigma})$ and aim to split it to reduce its variance along the direction $\mathbf{d} =\frac{\mathbf{h}}{||\mathbf{h}||}$.

\begin{figure}[t]
    \centering
    \includegraphics[width=\linewidth]{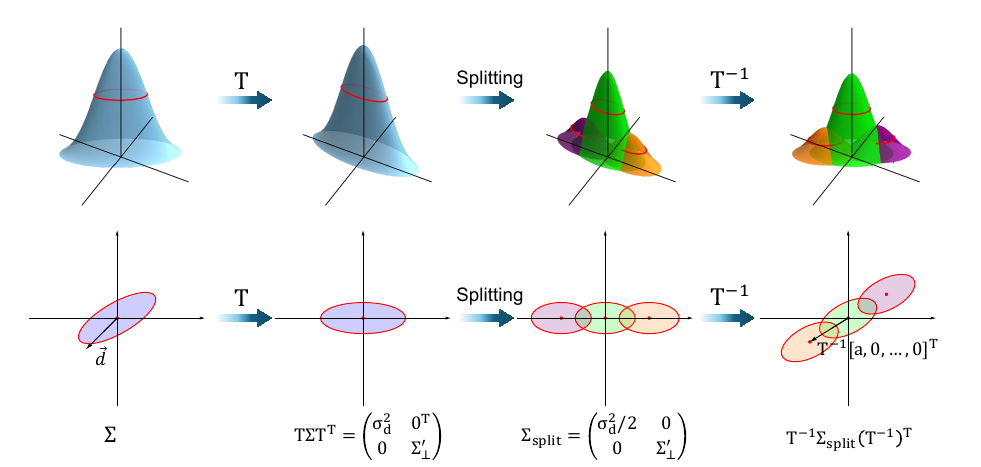}
    \caption{\textbf{Schematic diagram of the particle splitting method.} To reduce the variance along a specific direction $\mathbf{d}$, a linear transformation $\mathbf{T}$ is first applied to the particle so that the variance of the transformed particle along its principal axis equals the variance of the original particle along $\mathbf{d}$, i.e., $\sigma_d^2=\mathbf{d}^T\mathbf{\Sigma}\mathbf{d}$. The particle is then split into three child particles along the principal axis using the parameter values derived in \eqref{eq:split_opt}. The child particles are transformed back by applying $\mathbf{T}^{-1}$, yielding particles whose variance along $\mathbf{d}$ is halved. Note that the mean and covariance matrix of the child particles can be computed without explicitly computing either $\mathbf{T}$ or $\mathbf{T}^{-1}$  (see Sec.~\ref{sec:ps} for details).}
    \label{fig:fig2}
\end{figure}

As a first step, we construct a linear transformation $\mathbf{T}$ such that
\begin{equation}
\mathbf{T}=\begin{pmatrix} \mathbf{d}^T \\ \mathbf{B} \end{pmatrix}
\end{equation}
where the rows of $\mathbf{B}$ form an orthonormal basis for the null space of $(\mathbf{\Sigma}\mathbf{d})^T$. Such a matrix $\mathbf{T}$ can always be constructed, for example, using the Gram--Schmidt process. Note that $\mathbf{T}$ is invertible if the variance along $\mathbf{d}$ is nonzero (i.e., $\mathbf{d}^T \mathbf{\Sigma} \mathbf{d} \neq 0$), because this implies that $\mathbf{d}$ and $\mathbf{\Sigma}\mathbf{d}$ are not orthogonal, and thus $\mathbf{d}^T$ is linearly independent of the rows of $\mathbf{B}$. Importantly, the condition $\mathbf{d}^T \mathbf{\Sigma} \mathbf{d} \neq 0$ is satisfied when $\mathbf{d}$ is chosen as a level-set-point direction, because the variance along a level-set-point direction cannot be zero. Indeed, the condition is typically satisfied in filtering settings, since process noise is continuously introduced into the system. Hence, $\mathbf{T}$ can be considered as invertible.

Next, using $\mathbf{T}$, we transform the random vector $\mathbf{x}$ into $\mathbf{y} = \mathbf{T}\mathbf{x}$:
\begin{equation} \label{eq:y_partition}
\mathbf{y} = \begin{pmatrix} y_1 \\ \mathbf{y}_{\perp} \end{pmatrix} = \begin{pmatrix} \mathbf{d}^T \\ \mathbf{B} \end{pmatrix}\mathbf{x} = \begin{pmatrix} \mathbf{d}^T\mathbf{x} \\ \mathbf{B}\mathbf{x} \end{pmatrix}
\end{equation}
The first component, $y_1 = \mathbf{d}^T\mathbf{x}$, is a scalar random variable. Since it is a linear transformation of a Gaussian vector $\mathbf{x}$, $y_1$ itself follows a normal distribution with a mean of $E[y_1] = \mathbf{d}^T E[\mathbf{x}] = 0$. Its variance $\sigma_d^2$ is given by
\begin{equation} \label{eq:var_y1}
\sigma_d^2 = \text{Var}(y_1) = E[y_1y_1^T] = E[\mathbf{d}^T\mathbf{x}\mathbf{x}^T\mathbf{d}] = \mathbf{d}^T\mathbf{\Sigma}\mathbf{d}.
\end{equation}
This demonstrates that the variance of $y_1$ is equal to the variance of $\mathbf{x}$ along the direction $\mathbf{d}$, i.e., $\sigma_d^2 = \mathbf{d}^T\mathbf{\Sigma}\mathbf{d}$. Then, the covariance matrix of $\mathbf{y}$, $\mathbf{\Sigma}_\mathbf{y} = \mathbf{T}\mathbf{\Sigma}\mathbf{T}^T$, is computed as
\begin{equation} \label{eq:T_sigma_T_transpose}
\mat{\Sigma}_{\mat{y}}=\mat{T}\mat{\Sigma}\mat{T}^T = \begin{pmatrix} \vec{d}^T \\ \mat{B} \end{pmatrix} \mat{\Sigma} \begin{pmatrix} \vec{d} & \mat{B}^T \end{pmatrix} = \begin{pmatrix} \vec{d}^T\mat{\Sigma}\vec{d} & \vec{d}^T\mat{\Sigma}\mat{B}^T \\ \mat{B}\mat{\Sigma}\vec{d} & \mat{B}\mat{\Sigma}\mat{B}^T \end{pmatrix}.
\end{equation}
Note that, by construction of $\mat{B}$, the off-diagonal block $\mat{B}\mat{\Sigma}\vec{d}$ vanishes. Hence, $\mat{\Sigma}_{\mathbf{y}}$ is block-diagonal:
\begin{equation} \label{eq:sigma_prime_block}
\mat{\Sigma}_{\mathbf{y}} = 
\begin{pmatrix}
\sigma_d^2 & \vec{0}^T \\
\vec{0} & \mat{\Sigma}_{\mathbf{y},\perp}
\end{pmatrix}
\end{equation}
where $\mat{\Sigma}_{\mathbf{y},\perp}=\mat{B}\mat{\Sigma}\mat{B}^T$. 

The transformed random vector $\mathbf{y}$ now has a principal axis along which the variance equals that of $\mathbf{x}$ along $\mathbf{d}$ (i.e., $\mat{d}^T\mat{\Sigma}\mat{d}$). This enables us to split $\mathbf{y}$ into three child particles $\mathbf{y}_{\text{center}}$, $\mathbf{y}_{\text{left}}$, and $\mathbf{y}_{\text{right}}$ by directly adopting the values derived from \eqref{eq:split_opt}. The three child particles of $\mathbf{y}$ have the covariance matrix where the variance of the first component is halved:
\begin{equation} \label{eq:sigma_split_def}
\mat{\Sigma}_{\mat{y},\text{split}} = 
\begin{pmatrix}
\sigma_d^2/2 & \vec{0}^T \\
\vec{0} & \mat{\Sigma}_{\mat{y},\perp}
\end{pmatrix}
\end{equation}
Their means are given by
\begin{equation}
\boldsymbol{\mu}_{\mathbf{y},\text{center}} = 0, \hspace{4pt}
\boldsymbol{\mu}_{\mathbf{y},\text{left}} = [-a, 0, \dots, 0]^T, \hspace{4pt}
\boldsymbol{\mu}_{\mathbf{y},\text{right}} = [a, 0, \dots, 0]^T
\end{equation}
respectively, where $\boldsymbol{\mu}_{\mathbf{y},\text{center}}$, $\boldsymbol{\mu}_{\mathbf{y},\text{left}}$, and $\boldsymbol{\mu}_{\mathbf{y},\text{right}}$ denote the means of the center, left, and right child particles of $\mathbf{y}$, respectively, and $a = 1.03332\sigma_d$ is the displacement scalar.

Lastly, we apply the inverse transformation to $\mathbf{y}_{\text{center}}$, $\mathbf{y}_{\text{left}}$, and $\mathbf{y}_{\text{right}}$, obtaining the child particles of $\mathbf{x}$:
\begin{equation} \label{eq:final_particles}
\begin{aligned}
\mat{x}_{\text{center}}&=\mathbf{T}^{-1}\mathbf{y}_{\text{center}} \sim \mathcal{N}(0, \mat{\Sigma}_{\mat{x},\text{split}}) \\
\mat{x}_{\text{left}}&=\mathbf{T}^{-1}\mathbf{y}_{\text{left}} \sim \mathcal{N}(\mat{T}^{-1}\boldsymbol{\mu}_{\mathbf{y},\text{left}}, \mat{\Sigma}_{\mat{x},\text{split}}) \\
\mat{x}_{\text{right}}&=\mathbf{T}^{-1}\mathbf{y}_{\text{right}} \sim \mathcal{N}(\mat{T}^{-1}\boldsymbol{\mu}_{\mathbf{y},\text{right}}, \mat{\Sigma}_{\mat{x},\text{split}})
\end{aligned}
\end{equation}
where $\mat{\Sigma}_{\mat{x},\text{split}}=\mat{T}^{-1}\mat{\Sigma}_{\mat{y},\text{split}}(\mat{T}^{-1})^T$. Note that the $\mat{T}^{-1}\boldsymbol{\mu}_{\mathbf{y},\text{left}}$ and $\mat{T}^{-1}\boldsymbol{\mu}_{\mathbf{y},\text{right}}$ (i.e., $\mat{T}^{-1}[\pm a, 0, \dots, 0]^T$) can be expressed in a simpler form. Specifically, we have $\mat{T}^{-1}\mat{\Sigma}_\mat{y} = \mat{\Sigma}\mat{T}^T$ because $\mat{\Sigma}_{\mat{y}} = \mat{T}\mat{\Sigma}\mat{T}^T$. This can be written as follows:
\begin{equation}
    \mat{T}^{-1}\mat{\Sigma}_{\mat{y}}
    =
    \mat{T}^{-1}
    \begin{pmatrix}
    \sigma_d^2 & \vec{0}^T \\
    \vec{0} & \mat{\Sigma}_{\mat{y},\perp}
    \end{pmatrix}
    =\mat{\Sigma}
    \begin{pmatrix}
    \vec{d} & \mat{B}^{T}
    \end{pmatrix}
\end{equation}
By comparing the first columns on both sides of the second equality, we can observe that the vector obtained by multiplying the first column of $\mat{T}^{-1}$ by $\sigma_d^2$ is equal to $\mat{\Sigma}\vec{d}$. Therefore, the following relationship holds:
\begin{equation}  \label{eq:displacement}
\mat{T}^{-1}[a, 0, \dots, 0]^T=a \frac{\Sigma\vec{d}}{\sigma_d^{2}}=1.03332\sigma_d\frac{\Sigma\vec{d}}{\sigma_d^{2}}=\frac{1.03332}{\sigma_d}\Sigma\vec{d}
\end{equation}
This shows that it is not necessary to explicitly compute either the transformation matrix $\mat{T}$ or its inverse when calculating the means $\mat{T}^{-1}\boldsymbol{\mu}_{\mathbf{y},\text{left}}$ and $\mat{T}^{-1}\boldsymbol{\mu}_{\mathbf{y},\text{right}}$ of the child particles in \eqref{eq:final_particles}. We will later show that computing either $\mat{T}$ or $\mat{T}^{-1}$ is also unnecessary for evaluating their covariance matrix $\mat{\Sigma}_{\mat{x},\text{split}}$.

Next, we show that the splitting method strictly reduces the variance of the particle distribution along a given direction $\vec{d}$. Specifically, we decompose $\vec{x} \sim \mathcal{N}(\boldsymbol{0}, \mat{\Sigma})$ into two components: $\vec{x}_{\parallel}$ that is parallel to the covariance vector between $\mathbf{x}$ and its projection onto direction $\vec{d}$, i.e., $\vec{d}^T \vec{x}$, and $\vec{x}_{\perp}$ that is uncorrelated with $\vec{d}^T \vec{x}$. That is,
\[
\vec{x} = \vec{x}_{\parallel} + \vec{x}_{\perp} \quad \text{where} \quad \vec{x}_{\parallel}  \parallel \text{Cov}(\mathbf{x},\vec{d}^T \vec{x})=\mathbf{\Sigma}\mathbf{d} \quad \text{and} \quad \text{Cov}(\vec{x}_{\perp}, \vec{d}^T \vec{x}) = 0.
\]
This decomposition can always be constructed by taking
\begin{equation} \label{eq:x_parallel_perp_def}
\vec{x}_{\parallel} := \frac{\vec{d}^T \vec{x}}{\vec{d}^T\mat{\Sigma}\vec{d}}\mat{\Sigma}\vec{d}, \quad 
\vec{x}_{\perp} := \vec{x} - \vec{x}_{\parallel},
\end{equation}
because $\vec{x}_{\parallel}$ is a scalar multiple of  $\mathrm{Cov}(\vec{x}, \vec{d}^T \vec{x}) = \mat{\Sigma}\vec{d}$ with scalar $\frac{\vec{d}^T \vec{x}}{\vec{d}^T\mat{\Sigma}\vec{d}}$ and
\begin{align}
    \mathrm{Cov}(\vec{x}_{\perp}, \vec{d}^T \vec{x})
    &= \mathbb{E}\big[\vec{x}_{\perp} \vec{d}^T \vec{x}\big] \nonumber\\
    &= \mathbb{E}\big[(\vec{x}-\vec{x}_{\parallel})\vec{d}^T \vec{x}\big] \nonumber\\
    &= \mathbb{E}[\vec{x}\vec{d}^T \vec{x}] - \mathbb{E}\left[\frac{\vec{d}^T \vec{x}\,\vec{d}^T \vec{x}}{\vec{d}^T\mat{\Sigma}\vec{d}}\,\mat{\Sigma}\vec{d}\right] \nonumber\\
    &= \mat{\Sigma}\vec{d} - \frac{\vec{d}^T\mat{\Sigma}\vec{d}}{\vec{d}^T\mat{\Sigma}\vec{d}} \mat{\Sigma}\vec{d}=\mat{\Sigma}\vec{d} - \mat{\Sigma}\vec{d} = \vec{0}.
    \label{eq:cov_xperp_y}
\end{align}
Note that $\mathrm{Cov}(\vec{x}_{\parallel}, \vec{d}^T \vec{x})=\mathbf{\Sigma}\vec{d}=\mathrm{Cov}(\vec{x}, \vec{d}^T \vec{x})$:
\begin{align}
    \mathrm{Cov}(\vec{x}_{\parallel}, \vec{d}^T \vec{x})
    &= \mathbb{E}\big[\vec{x}_{\parallel} \vec{d}^T \vec{x}\big] \nonumber\\
    &= \mathbb{E}\left[\frac{\vec{d}^T \vec{x}\,\vec{d}^T \vec{x}}{\vec{d}^T\mat{\Sigma}\vec{d}}\,\mat{\Sigma}\vec{d}\right] \nonumber\\
    &= \frac{\vec{d}^T\mat{\Sigma}\vec{d}}{\vec{d}^T\mat{\Sigma}\vec{d}} \mat{\Sigma}\vec{d} \nonumber\\
    &= \mat{\Sigma}\vec{d} = \text{Cov}(\vec{x},\vec{d}^T \vec{x}).
    \label{eq:cov_xparallel_y}
\end{align}
This implies that $\vec{x}_{\parallel}$ exactly captures the component of $\vec{x}$ that is correlated with $\vec{d}^T \vec{x}$, while $\vec{x}_{\perp}$ represents the residual component that is uncorrelated with $\vec{d}^T \vec{x}$. It also follows that $\vec{x}_{\perp}$ and $\vec{d}^T \vec{x}$ are independent from \eqref{eq:cov_xperp_y} since $\vec{x}$ is Gaussian and $(\vec{x}_{\perp}, \vec{d}^T \vec{x})$ are jointly Gaussian. This results in $\text{Cov}(\vec{x}_{\perp},\vec{x}_{\parallel})=\vec{0}$ because $\vec{x}_{\parallel}$ is a linear function of $\vec{d}^T \vec{x}$ \eqref{eq:x_parallel_perp_def} and $\text{Cov}(\vec{x}_{\perp},\vec{d}^T \vec{x})=\vec{0}$ \eqref{eq:cov_xperp_y}. Therefore,
\[
\mat{\Sigma} = \mat{\Sigma}_{\parallel} + \mat{\Sigma}_{\perp}.
\]
where $\mat{\Sigma}_{\parallel}$ and $\mat{\Sigma}_{\perp}$ denote $\text{Var}(\vec{x}_{\parallel})$ and $\text{Var}(\vec{x}_{\perp})$, respectively, and 
\begin{align}
    \mat{\Sigma}_{\parallel}
    &= \text{Var}\left(\frac{\vec{d}^T \vec{x}}{\vec{d}^T\mat{\Sigma}\vec{d}}\mat{\Sigma}\vec{d}\right) \nonumber\\
    &= \frac{\text{Var}(\vec{d}^T \vec{x})}{(\vec{d}^T\mat{\Sigma}\vec{d})^2}(\mat{\Sigma}\vec{d})(\mat{\Sigma}\vec{d})^T \nonumber\\
    &= \frac{(\mat{\Sigma}\vec{d})(\mat{\Sigma}\vec{d})^T}{\vec{d}^T\mat{\Sigma}\vec{d}}.
    \label{eq:sigma_parallel_def}
\end{align}
This implies that $\mat{\Sigma}_{\parallel}$ represents the covariance contributed by the component of $\vec{x}$ aligned with $\text{Cov}(\vec{x},\vec{d}^T \vec{x})=\mat{\Sigma}\vec{d}$, i.e., the variability of $\vec{x}$ explained by its projection onto direction $\vec{d}$. This suggests that $\mat{\Sigma}_{\parallel}$ should be halved, while leaving the orthogonal component unchanged, if the proposed splitting method in \eqref{eq:final_particles} correctly reduces the variability of $\vec{x}$ explained by its projection onto $\vec{d}$. We confirm this in \Cref{thm:main_result}.

\begin{theorem} \label{thm:main_result}
The covariance $\mat{\Sigma}_{\mat{x},\text{split}}$ of the child particles in \eqref{eq:final_particles} is given by
\begin{equation} \label{eq:sigma_final_main_result}
\mat{\Sigma}_{\mat{x},\text{split}} = \mat{\Sigma} - \frac{1}{2}\mat{\Sigma}_{\parallel}.
\end{equation}
\end{theorem}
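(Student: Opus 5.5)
The plan is to write $\mat{\Sigma}_{\mat{y},\text{split}}$ as $\mat{\Sigma}_{\mat{y}}$ minus a rank-one correction and then push that correction back through $\mat{T}^{-1}$. By \eqref{eq:sigma_prime_block} and \eqref{eq:sigma_split_def}, the two matrices differ only in the $(1,1)$ entry:
\begin{equation*}
\mat{\Sigma}_{\mat{y},\text{split}} = \mat{\Sigma}_{\mat{y}} - \frac{\sigma_d^2}{2}\,\vec{e}_1\vec{e}_1^T,
\end{equation*}
where $\vec{e}_1=[1,0,\dots,0]^T$. Conjugating by $\mat{T}^{-1}$ gives
\begin{equation*}
\mat{\Sigma}_{\mat{x},\text{split}} = \mat{T}^{-1}\mat{\Sigma}_{\mat{y}}(\mat{T}^{-1})^T - \frac{\sigma_d^2}{2}\,(\mat{T}^{-1}\vec{e}_1)(\mat{T}^{-1}\vec{e}_1)^T .
\end{equation*}

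The first term is handled directly. Since $\mat{\Sigma}_{\mat{y}}=\mat{T}\mat{\Sigma}\mat{T}^T$ and $\mat{T}$ is invertible (this holds whenever $\vec{d}^T\mat{\Sigma}\vec{d}\neq 0$, as argued above), we get $\mat{T}^{-1}\mat{\Sigma}_{\mat{y}}(\mat{T}^{-1})^T=\mat{\Sigma}$.

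For the second term we use the identity already derived before \eqref{eq:displacement}. Comparing the first columns of $\mat{T}^{-1}\mat{\Sigma}_{\mat{y}}=\mat{\Sigma}\mat{T}^T$, and using the block-diagonal form \eqref{eq:sigma_prime_block}, gives $\sigma_d^2\,\mat{T}^{-1}\vec{e}_1=\mat{\Sigma}\vec{d}$. Hence
\begin{equation*}
\mat{T}^{-1}\vec{e}_1=\frac{\mat{\Sigma}\vec{d}}{\sigma_d^2}.
\end{equation*}
Substituting this in, the correction becomes
\begin{equation*}
\frac{\sigma_d^2}{2}\cdot\frac{(\mat{\Sigma}\vec{d})(\mat{\Sigma}\vec{d})^T}{\sigma_d^4}=\frac{1}{2}\,\frac{(\mat{\Sigma}\vec{d})(\mat{\Sigma}\vec{d})^T}{\vec{d}^T\mat{\Sigma}\vec{d}}.
\end{equation*}
By \eqref{eq:sigma_parallel_def} this equals $\frac{1}{2}\mat{\Sigma}_{\parallel}$, which yields \eqref{eq:sigma_final_main_result}.

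The only delicate step is justifying the column identity for $\mat{T}^{-1}\vec{e}_1$. It relies on the vanishing off-diagonal block $\mat{B}\mat{\Sigma}\vec{d}=\vec{0}$, which is what makes the first column of $\mat{\Sigma}_{\mat{y}}$ equal to $\sigma_d^2\vec{e}_1$, together with $\sigma_d^2\neq 0$. Both were established earlier, so I expect no real obstacle.

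As an optional consistency check, one can verify that the result halves the variance along $\vec{d}$ while keeping the remaining variability fixed. First, $\vec{d}^T\mat{\Sigma}_{\mat{x},\text{split}}\vec{d}=\sigma_d^2-\tfrac12\sigma_d^2=\sigma_d^2/2$. Second, the decomposition $\mat{\Sigma}=\mat{\Sigma}_\parallel+\mat{\Sigma}_\perp$ gives $\mat{\Sigma}_{\mat{x},\text{split}}=\tfrac12\mat{\Sigma}_\parallel+\mat{\Sigma}_\perp$, so $\mat{\Sigma}_\perp$ is unchanged. The formula also shows that neither $\mat{T}$ nor $\mat{T}^{-1}$ has to be formed explicitly.
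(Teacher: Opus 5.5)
Your proposal is correct and follows the paper's proof essentially step for step. Both write $\mat{\Sigma}_{\mat{y},\text{split}}=\mat{\Sigma}_{\mat{y}}-\frac{\sigma_d^2}{2}\vec{e}_1\vec{e}_1^T$, conjugate by $\mat{T}^{-1}$, and substitute the column identity $\mat{T}^{-1}\vec{e}_1=\mat{\Sigma}\vec{d}/\sigma_d^2$ from \eqref{eq:displacement}. Your closing check (halved variance along $\vec{d}$, unchanged $\mat{\Sigma}_\perp$) is a correct extra but not needed for the theorem.
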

\begin{proof}
In the splitting procedure, the variance along the first coordinate of $\mathbf{y}=\mat{T}\mathbf{x}$ is reduced by half \eqref{eq:sigma_split_def}:
\begin{align}
\mat{\Sigma}_{\mathbf{y},\text{split}} 
= \mat{\Sigma}_{\mathbf{y}} - \frac{\sigma_d^2}{2}\vec{e}_1\vec{e}_1^\top,
\label{eq:y_split_eq}
\end{align}
where $\vec{e}_1$ denotes the first standard basis vector in $\mathbb{R}^d$ and $\sigma^2_{d}=\mat{d}^T\mat{\Sigma}\mat{d}$. Mapping \eqref{eq:y_split_eq} back to the $\mathbf{x}$-space via the inverse transformation, we obtain
\begin{align}
\mat{\Sigma}_{\mat{x},\text{split}} 
&= \mat{T}^{-1} \mat{\Sigma}_{\mathbf{y},\text{split}} (\mat{T}^{-1})^T \nonumber\\
&= \mat{T}^{-1} \left( \mat{\Sigma}_{\mathbf{y}} - \frac{\sigma_d^2}{2}\vec{e}_1\vec{e}_1^T \right) (\mat{T}^{-1})^T \nonumber\\
&= \mat{\Sigma} - \frac{\sigma_d^2}{2} (\mat{T}^{-1}\vec{e}_1)(\mat{T}^{-1}\vec{e}_1)^T.
\end{align}
Because $\mat{T}^{-1}\vec{e}_1 = \frac{\mat{\Sigma}\vec{d}}{\sigma_d^2}$ \eqref{eq:displacement}, we have

\begin{align}
\mat{\Sigma}_{\mat{x},\text{split}} 
&= \mat{\Sigma} - \frac{\sigma_d^2}{2}
\left( \frac{\mat{\Sigma}\vec{d}}{\sigma_d^2} \right)
\left( \frac{\mat{\Sigma}\vec{d}}{\sigma_d^2} \right)^T \nonumber\\
&= \mat{\Sigma} - \frac{1}{2} \frac{(\mat{\Sigma}\vec{d})(\mat{\Sigma}\vec{d})^T}{\vec{d}^T\mat{\Sigma}\vec{d}} 
\nonumber\\
&= \mat{\Sigma} - \frac{1}{2} \mat{\Sigma}_{\parallel}.
\label{eq:thm_split_result}
\end{align}
Note that the third equality follows from \eqref{eq:sigma_parallel_def}.
\end{proof}

\begin{remark}
\Cref{thm:main_result} provides a theoretical justification for the particle splitting method in \eqref{eq:final_particles}. In particular, it shows that the method selectively reduces the covariance captured by projecting $\mathbf{x}$ onto direction $\mathbf{d}$ and reconstructing it optimally as
\[
\operatorname{Var}\!\left(\mathbb{E}[\mathbf{x} \mid \mathbf{d}^T \mathbf{x}]\right)
=
\frac{(\Sigma \mathbf{d})(\Sigma \mathbf{d})^T}{\mathbf{d}^T \Sigma \mathbf{d}}.
\]
\end{remark}

\begin{remark}
The means and covariances of the child particles can be computed by 
\eqref{eq:displacement} and \eqref{eq:thm_split_result} without explicitly computing either $\mat{T}$ or its inverse. 
Therefore, the splitting procedure remains computationally tractable, despite involving a transformation and its inverse.
\end{remark}

Moreover, we show that the error introduced by particle splitting \eqref{eq:final_particles} is identical to the error obtained from the corresponding $1$--dimensional particle splitting optimization problem \eqref{eq:split_opt} (see~\cref{app:app3}). This justifies the use of the parameter values $a = 1.03332\sigma$ and $w = 0.21921$ from \eqref{eq:split_opt} to determine the displacements and weights of the child particles at every splitting event, without solving an auxiliary numerical optimization problem.

Taken together with the above remarks, this demonstrates that the proposed method enables well-tuned particle splitting. This is important from the perspectives of both accuracy and computational cost. Specifically, unnecessary splitting increases the number of Gaussian particles beyond what is needed, resulting in the excessive executions of Algorithm~\ref{alg:single_particle_update} and thus increasing the computational burden. Moreover, repeatedly solving the optimization problem \eqref{eq:full_split_opt} during splitting due to unnecessary splits may lead to a non-negligible accumulation of numerical errors, thereby affecting the filtering results.

The particle splitting algorithm is illustrated in Algorithm~\ref{alg:particle_splitting}. Note that when particle splitting is applied iteratively, the weights $W$ of some particles can become extremely small, causing them to contribute negligibly to the overall distribution. In such cases, particles with weights below a certain threshold $W_{\text{threshold}}$ are pruned. This threshold is set to $10^{-4}$ unless otherwise specified. The choice of threshold introduces a trade-off between computational cost and accuracy, as retaining more small-weight particles increases the number of executions of Algorithm~\ref{alg:single_particle_update}.

\begin{algorithm}[H]
\caption{Particle Splitting} 
\label{alg:particle_splitting}
\begin{algorithmic}[1]

\REQUIRE $W_{\text{threshold}}\in(0,1]$, $\epsilon_{\text{threshold}}>0$, a Gaussian particle $p = (W, \bar{\mathbf{x}}, \mathbf{\Sigma})$ where $W \in (0,1]$ is the weight, $\bar{\mathbf{x}} \in \mathbb{R}^d$ and $\mathbf{\Sigma}= \mathbf{M} {\mathbf{M}}^T \in \mathbb{R}^{d \times d}$ denote the mean and covariance, respectively.

\IF{$W < W_{\text{threshold}}$}
    \STATE \textbf{return} $\emptyset$
\ENDIF

\FOR{$i = 1$ to $d$}
    \STATE Calculate $\epsilon_i$ using \eqref{eq:relative_error} with 
    $\bar{\mathbf{x}}$ and $\Delta \mathbf{x}^{\hat{i}} = \mathbf{M}_i$ where $\mathbf{M}_i$ is $i$-th column of $\mathbf{M}$
\ENDFOR

\STATE $k \leftarrow \arg\max_{1 \le i \le d} \epsilon_i$

\IF{$\epsilon_k \le \epsilon_{\text{threshold}}$}
    \STATE \textbf{return} $\{(W, \bar{\mathbf{x}}, \mathbf{\Sigma})\}$
\ENDIF

\STATE $\mathbf{d}_k \leftarrow \mathbf{M}_k / \|\mathbf{M}_k\|$

\STATE Compute
\[
\boldsymbol{\mu}= \frac{a}{\mathbf{d}_k^T \mathbf{\Sigma} \mathbf{d}_k}\mathbf{\Sigma}\mathbf{d}_k,\,\,\, \mathbf{\Sigma}_{\parallel,k}
=
\frac{
(\mathbf{\Sigma}\mathbf{d}_k)(\mathbf{\Sigma}\mathbf{d}_k)^T
}{
\mathbf{d}_k^T \mathbf{\Sigma} \mathbf{d}_k
}
\]
where $a=1.03332\sqrt{\mathbf{d}_k^T \mathbf{\Sigma} \mathbf{d}_k}$ and $w=0.21921$ are the values from \eqref{eq:split_opt}.

\STATE Generate three child particles: center, left, and right, denoted by $p_{\text{center}}$, $p_{\text{left}}$, and $p_{\text{right}}$:
\[
p_\text{center} =
\left((1-2w)W, 
\bar{\mathbf{x}},
\mathbf{\Sigma} - \tfrac{1}{2}\mathbf{\Sigma}_{\parallel,k}
\right)
\]
\[
p_\text{left} =
\left(wW, 
\bar{\mathbf{x}} - \boldsymbol{\mu},
\mathbf{\Sigma} - \tfrac{1}{2}\mathbf{\Sigma}_{\parallel,k}
\right),\,\,\,p_\text{right} =
\left(wW, 
\bar{\mathbf{x}} + \boldsymbol{\mu},
\mathbf{\Sigma} - \tfrac{1}{2}\mathbf{\Sigma}_{\parallel,k}
\right)
\]

\STATE \textbf{return}
$p_{\text{center}}, p_{\text{left}}, p_\text{right}$

\end{algorithmic}
\end{algorithm}

\subsubsection{Particle combining} \label{sec:pc}
Particle splitting generates new particles. Thus, using only particle splitting for filtering leads to a geometric increase in the number of particles over time, making the method impractical. To address this issue, particle combining is required. This need has also been recognized in previous work \cite{zhang2022gaussian, faubel2009split, wang2023asymmetric, runnalls2007kullback, Yu2014Adaptive}. For example, Wang and Forger \cite{wang2023asymmetric} recently developed a method to determine which particles should be combined by partitioning the state space into cubic-shaped grid cells and considering the proximity of particle means in the same cell. Specifically, if the mean distance among particles, $\{\vec{x}_i \sim \mathcal{N}(\boldsymbol{\mu}_i, \mat{\Sigma}_i)\}_{i=1}^{n}$, within the same cell is less than a certain radius, they are combined into a single Gaussian particle, $\vec{x}_{\text{com}} \sim \mathcal{N}(\boldsymbol{\mu}_{\text{com}}, \mat{\Sigma}_{\text{com}})$:
\begin{equation}
\label{eq:combined_mean_cov}
\begin{aligned}
\boldsymbol{\mu}_{\text{com}}
&=
\frac{1}{w_{\text{com}}}
\sum_{i=1}^{n} w_i \boldsymbol{\mu}_i,\\
\boldsymbol{\Sigma}_{\text{com}}
&=
\underbrace{\frac{1}{w_{\text{com}}}\sum_{i=1}^{n} w_i \mathbf{\Sigma}_i}_{\mathbf{\Sigma}_{\text{W}}}
+
\underbrace{\frac{1}{w_{\text{com}}}\sum_{i=1}^{n} w_i
(\boldsymbol{\mu}_i-\boldsymbol{\mu}_{\text{com}})
(\boldsymbol{\mu}_i-\boldsymbol{\mu}_{\text{com}})^T}_{\mathbf{\Sigma}_{\text{B}}}
\end{aligned}
\end{equation}
where $w_i$ is the weight of $\vec{x}_i$, $w_{\text{com}}=\sum_{i=1}^n w_i$ is the weight of the combined particle, $\boldsymbol{\Sigma}_{\text{W}}$ represents the within-component contribution, and $\boldsymbol{\Sigma}_{\text{B}}$ represents the between-component contribution. Note that the equality of the variances is a direct result of the law of total variance.

The method is computationally efficient because it only uses the proximity of particle means as the combining criterion. However, it may make incorrect combining decisions in certain cases. For instance, if particles have similar means but significantly different covariances, the distribution of the combined particle can differ substantially from that of their weighted sum (Fig. \ref{fig:fig3}(a)). Furthermore, even when the means are far apart, sufficiently large covariances can make the distribution of the combined particle similar to that of their weighted sum (Fig. \ref{fig:fig3}(b)). In such cases, incorrect particle-combining decisions based on particle means can lead to non-negligible numerical errors.

\begin{figure}[!htbp]
    \centering
    \includegraphics[width=\linewidth]{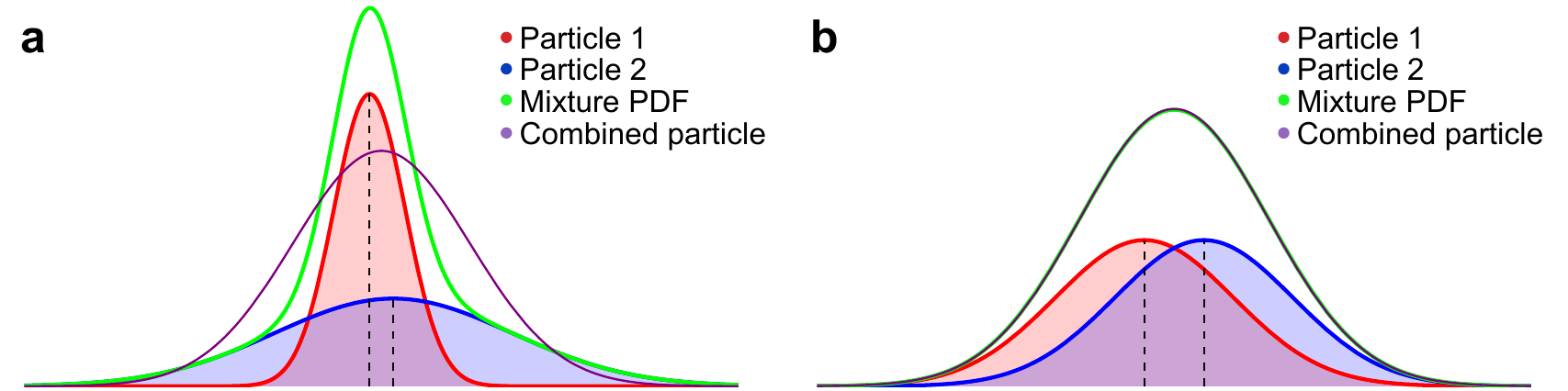}
    \caption{\textbf{Particle-combining decisions based solely on particle means can be inaccurate, leading to numerical errors and unnecessary computational cost.} \textbf{(a)} When the means of particles 1 (red) and 2 (blue) are close but their variances differ substantially, the PDF of the combined particle (purple) can differ significantly from the weighted mixture PDF (green). Thus, combining particles solely because their means are close can lead to numerical errors. \textbf{(b)} When the means of two particles are far apart but their variances are sufficiently large, the weighted mixture PDF can still be approximated well by the PDF of the combined particle. Thus, relying solely on particle means can result in an unnecessarily large number of particles, increasing computational cost.}
    \label{fig:fig3}
\end{figure}

To prevent this, we focus on the similarity between the distributions. Specifically, let us consider the Gaussian mixture
\begin{equation} \label{eq:gmm_reduction}
\begin{split}
    \mathbf{x} = \sum_{i=1}^{N} w_i \mathbf{x}_i, \quad \mathbf{x}_i \sim \mathcal{N}(x; \boldsymbol{\mu}_i, \mathbf{\Sigma}_i), \quad \sum_{i=1}^{N} w_i = 1.
\end{split}
\end{equation}
In the mixture, we aim to determine whether two particles, denoted without loss of generality as $\mathbf{x}_1$ and $\mathbf{x}_2$, should be combined to $\mathbf{x}_{\text{com}}$ computed using \eqref{eq:combined_mean_cov}. For this, we first consider the KL divergence between the Gaussian mixtures before and after combining:
\begin{multline} \label{eq:kl_div}
    D_{\text{KL}}(w_1\pi_1 + w_2\pi_2 + \pi_{\text{res}}, \pi_{\text{com}} + \pi_{\text{res}}) = \\
    \int_{\mathbb{R}^d} (w_1\pi_1 + w_2\pi_2 + \pi_{\text{res}})(x) \log \frac{(w_1\pi_1 + w_2\pi_2 + \pi_{\text{res}})(x)}{(\pi_{\text{com}} + \pi_{\text{res}})(x)} dx.
\end{multline}
where $\pi_i$ and $\pi_{\text{res}}$ denote the PDFs of $\mathbf{x}_i$ and the weighted sum of residual particles $\sum_{i=3}^{N} w_i \mathbf{x}_i$, respectively. Note that, in \eqref{eq:kl_div}, $\pi_{\text{res}}$ is included because we focus on the effect of particle combining on the entire mixture $\mathbf{x}$ in \eqref{eq:gmm_reduction}, rather than only on the sub-mixture consisting of $\mathbf{x}_1$ and $\mathbf{x}_2$. Although the KL divergence \eqref{eq:kl_div} provides a principled criterion for combining particles, its computation is intractable during filtering, particularly when a large number of particle pairs must be considered. Thus, we instead utilize the computable upper bound $\mathcal{B}(\mathbf{x}_1,\mathbf{x}_2)$ on the KL divergence inspired by \cite{runnalls2007kullback} and \cite{wills2023bayesianfilteringalgorithmgaussian}:
\begin{equation} \label{eq:kl_bound1}
    D_{\text{KL}}(w_1\pi_1 + w_2\pi_2 + \pi_{\text{res}}, \pi_{\text{com}} + \pi_{\text{res}}) \le \mathcal{B}(\mathbf{x}_1,\mathbf{x}_2)
\end{equation}
where
\begin{equation} \label{eq:kl_bound_final}
    \mathcal{B}(\mathbf{x}_1,\mathbf{x}_2) = \frac{1}{2}\left[(w_1+w_2)\log \det(\mathbf{\Sigma}_{\text{com}}) - w_1\log \det(\mathbf{\Sigma}_1) - w_2\log \det(\mathbf{\Sigma}_2)\right].
\end{equation}

\vspace{1.3ex}

\noindent This upper bound quantifies the information loss caused by replacing two Gaussian particles with a single Gaussian particle in the mixture. Although $\mathcal{B}(\mathbf{x}_i,\mathbf{x}_j)$ is expressed only in terms of the covariance determinants, it is an upper bound on the KL divergence between the Gaussian mixtures before and after combining, rather than a direct measure of the covariance change itself \cite{runnalls2007kullback}. The detailed step-by-step derivation of \eqref{eq:kl_bound1} and \eqref{eq:kl_bound_final} is provided in~\cref{app:app4} for completeness, since our notation and context differ from those in \cite{runnalls2007kullback}. Note that computing \eqref{eq:kl_bound_final} is computationally efficient as it only involves determinant evaluations and avoids expensive procedures such as Monte Carlo sampling for KL divergence estimation.

Using \eqref{eq:kl_bound_final}, we perform particle combining (Fig. \ref{fig:fig4}). Specifically, we first partition the state space into a grid, as done in \cite{wang2023asymmetric}. Within each grid cell, we compute the minimum value of $\mathcal{B}(\mathbf{x}_i,\mathbf{x}_j)$ over all particle pairs. If the minimum value falls below a threshold $C_{\text{threshold}}$, the corresponding particle pair is combined, as illustrated in Algorithm~\ref{alg:kl_gmm_reduction}. $C_{\text{threshold}}$ is set to 0.05 unless otherwise specified. Note that the combining process is iterated in each grid cell until the minimum value is greater than $C_{\text{threshold}}$.

\begin{figure}[h]
    \centering
    \includegraphics[width=0.8\linewidth]{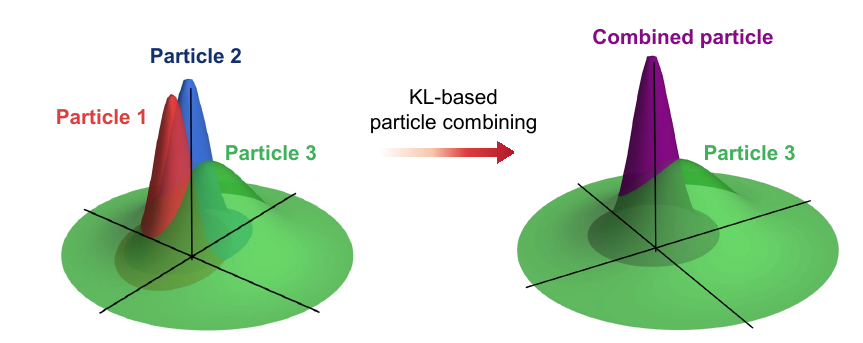}
    \caption{\textbf{Particle combining based on the Kullback--Leibler divergence upper bound.} The upper bound in \eqref{eq:kl_bound_final} is evaluated for each particle pair. If the minimum value among all pairs is below a threshold, the corresponding pair is combined, as illustrated for particles 1 and 2.}
    \label{fig:fig4}
\end{figure}

\begin{algorithm}[h]
\caption{Particle Combining}
\label{alg:kl_gmm_reduction}
\begin{algorithmic}[1]

\REQUIRE $C_{\text{threshold}} > 0$, particle set and corresponding weights
$\{\vec{x}_i,w_i\}_{i \in S}$ in a grid cell ($S$: particle indices in the cell)

\STATE Compute $\mathcal{B}(\mathbf{x}_i,\mathbf{x}_j)$ using \eqref{eq:kl_bound_final} for all $i,j \in S$ with $i < j$
\STATE Find
\[
(i^\star, j^\star)=\arg\min_{i,j \in S,\; i < j}\mathcal{B}(\mathbf{x}_i,\mathbf{x}_j)
\]

\IF{$\mathcal{B}(\mathbf{x}_{i^\star},\mathbf{x}_{j^\star}) < C_{\text{threshold}}$}
    \STATE Compute the combined particle and its weight $\{\mathbf{x}_{\mathrm{com}}, w_{\mathrm{com}}\}$ from $\mathbf{x}_{i^\star}$ and $\mathbf{x}_{j^\star}$ using \eqref{eq:combined_mean_cov}:
    \STATE \hspace{1em}
    $\{\mathbf{x}_{i^\star},w_{i^\star}\}
    \leftarrow
    \{\mathbf{x}_{\text{com}},w_{\text{com}}\}$
    \STATE Remove $j^\star$ from $S$: $S \leftarrow S \setminus \{j^\star\}$
\ENDIF

\RETURN Updated particle set and corresponding weights $\{\vec{x}_i,w_i\}_{i \in S}$

\end{algorithmic}
\end{algorithm}

\subsection{Measurement-update step} \label{sec:measurementupdate}
As a result of the time-update step, we obtain the predicted (i.e., prior) state distribution, represented as a Gaussian mixture:
\begin{equation}
    p(\mathbf{x}_{t_k}\mid \mathbf{z}_{t_{1:k-1}})
    =
    \sum_{i=1}^{M} \hat{w}_i \hat{\pi}_i(\mathbf{x}_{t_k})
\label{eq:pred_state_dis}
\end{equation}
where $\mathbf{z}_{t_{1:k-1}}=\{\mathbf{z}_{t_1}, \cdots, \mathbf{z}_{t_{k-1}}\}$, $\hat{w}_i$ is the predicted weight of the $i$-th Gaussian component, and $\hat{\pi}_i$ denotes its corresponding PDF with mean $\hat{\boldsymbol{\mu}}_i$ and covariance matrix $\hat{\mathbf{\Sigma}}_i$. The predicted state distribution \eqref{eq:pred_state_dis} is updated using Bayes' theorem when a new measurement $\mathbf{z}_{t_k}$ is given:
\begin{align}
    p(\mathbf{x}_{t_k} \mid \mathbf{z}_{t_{1:k}})
    &=
    \frac{
        p(\mathbf{z}_{t_k} \mid \mathbf{x}_{t_k})
        \sum_{i=1}^{M} \hat{w}_i \hat{\pi}_i(\mathbf{x}_{t_k})
    }{
        \int
        p(\mathbf{z}_{t_k} \mid \mathbf{x}_{t_k})
        \sum_{i=1}^{M} \hat{w}_i \hat{\pi}_i(\mathbf{x}_{t_k})
        \, d\mathbf{x}_{t_k}
    }\\[1.4ex]
    &=
    \frac{
        \sum_{i=1}^{M} \hat{w}_i p(\mathbf{z}_{t_k} \mid \mathbf{x}_{t_k}) \hat{\pi}_i(\mathbf{x}_{t_k})
    }{
        \sum_{i=1}^{M} \hat{w}_i \int
        p(\mathbf{z}_{t_k} \mid \mathbf{x}_{t_k})\hat{\pi}_i(\mathbf{x}_{t_k})
        \, d\mathbf{x}_{t_k}
    }\\[1.4ex]
    &=\frac{1}{c_k}\sum_{i=1}^{M} \hat{w}_i p(\mathbf{z}_{t_k} \mid \mathbf{x}_{t_k})\hat{\pi}_i(\mathbf{x}_{t_k})
\end{align}
where $c_k = \sum_{i=1}^{M} \hat{w}_i \int p(\mathbf{z}_{t_k} \mid \mathbf{x}_{t_k})\hat{\pi}_i(\mathbf{x}_{t_k}) \, d\mathbf{x}_{t_k}$. This expression can be further rewritten as
\begin{align}
    p(\mathbf{x}_{t_k} \mid \mathbf{z}_{t_{1:k}})
    &=
    \sum_{i=1}^{M}
    \Biggl[
    \frac{\hat{w}_i}{c_k}
    \left(
        \int
        p(\mathbf{z}_{t_k}\mid\mathbf{x}_{t_k})
        \hat{\pi}_i(\mathbf{x}_{t_k})
        d\mathbf{x}_{t_k}
    \right)
    \nonumber \\[1ex]
    &\qquad \cdot
    \left(
        \frac{
            p(\mathbf{z}_{t_k}\mid\mathbf{x}_{t_k})
            \hat{\pi}_i(\mathbf{x}_{t_k})
        }{
            \int
            p(\mathbf{z}_{t_k}\mid\mathbf{x}_{t_k})
            \hat{\pi}_i(\mathbf{x}_{t_k})
            d\mathbf{x}_{t_k}
        }
    \right)
    \Biggr].
\end{align}
This implies that the updated (i.e., posterior) distribution becomes the following mixture:

\begin{equation}
    p(\mathbf{x}_{t_k}\mid \mathbf{z}_{t_{1:k}})
    =
    \sum_{i=1}^{M} w_i^+ \pi_i^+(\mathbf{x}_{t_k})
\label{eq:updated_state_dis}
\end{equation}
where
\begin{equation*}
w_i^+=\frac{\hat{w}_i}{c_k}\left(\int p(\mathbf{z}_{t_k}\mid\mathbf{x}_{t_k}) \hat{\pi}_i(\mathbf{x}_{t_k}) d\mathbf{x}_{t_k} \right),\,\
\pi_i^+(\mathbf{x}_{t_k}) = \frac{p(\mathbf{z}_{t_k}\mid\mathbf{x}_{t_k})\hat{\pi}_i(\mathbf{x}_{t_k})}{\int p(\mathbf{z}_{t_k}\mid\mathbf{x}_{t_k}) \hat{\pi}_i(\mathbf{x}_{t_k}) d\mathbf{x}_{t_k}}.
\end{equation*}

Therefore, the measurement-update step consists of two parts: computing the posterior distribution $\pi_i^+(\mathbf{x}_{t_k})$ for each particle and computing the corresponding weight $w_i^+$. These parts are presented in Sec.~\ref{sec:singlepm} and Sec.~\ref{sec:weightupdate}, respectively.

\subsubsection{Single-particle measurement update} \label{sec:singlepm}
For the single-particle measurement update, we used the measurement-update method from the square-root CD-CKF \cite{arasaratnam2010cubature} because it can accommodate the positive semi-definite matrix calculated during the single-particle propagation step \cite{kim2023wearable, wang2021level}. We restate the method in Algorithm~\ref{alg:spmu} because our notation differs from that in previous studies \cite{arasaratnam2010cubature, kim2023wearable, wang2021level}. Note that this method yields a Gaussian particle, i.e., $\pi_i^+$ in \eqref{eq:updated_state_dis} is Gaussian, because the CD-CKF assumes Gaussianity. This assumption is reasonable in our algorithm, since each particle is expected to have a relatively narrow covariance due to both particle splitting and the measurement update. Consequently, the updated distribution \eqref{eq:updated_state_dis} becomes a Gaussian mixture, providing a suitable setting for performing the next time-update step.

\begin{algorithm}[tb]
\caption{Single-Particle Measurement Update} 
\label{alg:spmu}
\begin{algorithmic}[1]

\REQUIRE A predicted Gaussian particle $\hat{\mathbf{x}}_i \sim \mathcal{N}(\hat{\boldsymbol{\mu}}_i, \hat{\boldsymbol{\Sigma}}_i)$ where $\hat{\boldsymbol{\mu}}_i\in \mathbb{R}^d$ and $\hat{\boldsymbol{\Sigma}}_i = \hat{\mathbf{M}}_i\hat{\mathbf{M}}_i^T$, a measurement $\mathbf{z}_{t_k}$, a measurement function $\mathbf{h}$, and a measurement noise covariance matrix $\mathbf{R}$
\STATE Construct the concatenated matrix of the cubature points:
\[\mathbf{N} = \hat{\boldsymbol{\mu}} + \sqrt{d}\left(\hat{\mathbf{M}_i} \mid -\hat{\mathbf{M}_i}\right).\]
\vspace{-1em}
\STATE Compute the predicted measurement:
\[
\bar{\mathbf{z}}_{t_k} = \frac{1}{2d}\sum_{j=1}^{2d}\mathbf{h}(\mathbf{N})_{j},
\]
where $\mathbf{h}(\mathbf{N})_{j}$ is the $j$-th column of $\mathbf{h}(\mathbf{N})$.
\STATE Perform the QR-factorization:
\[
\begin{pmatrix}
\mathbf{T}_{11} & \mathbf{O} \\
\mathbf{T}_{21} & \mathbf{T}_{22}
\end{pmatrix}
=
\operatorname{qr}
\begin{pmatrix}
\mathbf{h}(\mathbf{N}) & \sqrt{\mathbf{R}} \\
\mathbf{N} & \mathbf{O}
\end{pmatrix},
\]
where $\mathbf{O}$ is a zero matrix.
\STATE Compute the updated mean $\boldsymbol{\mu}_i^+$ and covariance matrix $\mathbf{\Sigma}_i^+$:
\[
\boldsymbol{\mu}_i^+ = \hat{\boldsymbol{\mu}}_i + \mathbf{T}_{21}\mathbf{T}_{11}^{-1}(\mathbf{z}_{t_k} - \bar{\mathbf{z}}_{t_k}), \hspace{0.2cm} \mathbf{\Sigma}_i^+ = \mathbf{T}_{22} \mathbf{T}_{22}^T.
\]
\vspace{-1em}
\RETURN $\mathbf{x}_i^+ \sim \mathcal{N}({\boldsymbol{\mu}}_i^+, {\boldsymbol{\Sigma}}_i^+)$ with PDF $\pi_i^+$
\end{algorithmic}
\end{algorithm}

\subsubsection{Weight update} \label{sec:weightupdate}
Analytically computing the updated weight $w_i^+$ in \eqref{eq:updated_state_dis} is typically intractable \cite{arasaratnam2010cubature}. We thus compute $w_i^+$ by numerically approximating the integral using the cubature rule \cite{stroud1971approximate, cools1997constructing}:
\begin{equation}
    \int p(\mathbf{z}_{t_k}\mid\mathbf{x}_{t_k}) \hat{\pi}_i(\mathbf{x}_{t_k}) d\mathbf{x}_{t_k} 
    \approx \frac{1}{2d} \sum_{j=1}^{2d} \pi_{i,j}(\mathbf{z}_{t_{k}})
\label{eq:cubaapprox}
\end{equation}
where $\pi_{i,j}$ is a Gaussian PDF with mean $h(\hat{\boldsymbol{\mu}}_i + \sqrt{\hat{\mathbf{\Sigma}}_i}\boldsymbol{\xi}_j)$ and covariance matrix $\mathbf{R}$, and the cubature points $\boldsymbol{\xi}_j$ are given by
\begin{equation*} \label{eq:cubature_points_def}
    \boldsymbol{\xi}_j = 
    \begin{cases} 
        \sqrt{d}\,\mathbf{e}_j & j=1, \dots, d \\ 
        -\sqrt{d}\,\mathbf{e}_{j-d} & j=d+1, \dots, 2d.
    \end{cases}
\end{equation*}
Note that $h$ and $\mathbf{R}$ are the measurement function and measurement noise covariance matrix in \eqref{eq:meas}, respectively, and $\mathbf{e}_i$ is the $i$-th unit vector in $\mathbb{R}^d$. Specifically, applying the cubature-rule-based approximation \eqref{eq:cubaapprox} yields 
\begin{align}
    w_i^+ &=\hat{w}_i\frac{\int p(\mathbf{z}_{t_k}\mid\mathbf{x}_{t_k}) \hat{\pi}_i(\mathbf{x}_{t_k}) d\mathbf{x}_{t_k}}{\sum_{l=1}^{M} \hat{w}_l \int p(\mathbf{z}_{t_k} \mid \mathbf{x}_{t_k})\hat{\pi}_l(\mathbf{x}_{t_k}) \, d\mathbf{x}_{t_k}} \nonumber\\
    &\approx \hat{w}_i 
    \frac{\sum_{j=1}^{2d} \pi_{i,j}(\mathbf{z}_{t_k})}
    {\sum_{l=1}^{M} \hat{w}_l \sum_{j=1}^{2d} \pi_{l,j}(\mathbf{z}_{t_k})} \propto \hat{w}_i \sum_{j=1}^{2d} \pi_{i,j}(\mathbf{z}_{t_k})
\label{eq:weight_update}
\end{align}
Based on this proportional relationship in \eqref{eq:weight_update} and the normalization condition $\sum_{i=1}^{M} w_i^+ = 1$, the weights are updated. The detailed procedure is illustrated in Algorithm~\ref{alg:weight_update}.

\begin{algorithm}[H]
\caption{Weight Update}
\label{alg:weight_update}
\begin{algorithmic}[1]

\REQUIRE Predicted weights $\{\hat{w}_i\}_{i=1}^{M}$, a measurement $\mathbf{z}_{t_k}$, a measurement function $h$, a measurement noise covariance matrix $\mathbf{R}$, and predicted means and covariance matrices $\{\hat{\boldsymbol{\mu}}_i,\hat{\mathbf{\Sigma}}_i\}_{i=1}^{M}$ where $\hat{\boldsymbol{\mu}}_i\in \mathbb{R}^d$
\FOR{$i=1$ to $M$}
\STATE Update weights:
    \[
    w_i^+ \leftarrow \hat{w}_i \sum_{j=1}^{2d} \pi_{i,j}(\mathbf{z}_{t_k})
    \]
\ENDFOR
\STATE Normalize weights:
\[
w_i^+ = \frac{w_i^+}{\sum_{l=1}^{M}w^+_l}
\]
\STATE \textbf{return}
$\left\{w_i^+\right\}_{i=1}^{M}$
\end{algorithmic}
\end{algorithm}

\subsection{Implementation details} \label{sec:imple_details}
In this subsection, we explain how Algorithms~\ref{alg:single_particle_update}--\ref{alg:weight_update} are used to implement AMF. In the time-update step, individual particles are propagated using Algorithm~\ref{alg:single_particle_update}. Since the propagations of individual particles are independent, they can be performed in parallel. When particle propagations accumulate errors such that the local linear approximation assumption becomes invalid, the particles are split using Algorithm~\ref{alg:particle_splitting}. Splitting is repeatedly applied until all resulting particles satisfy the error criterion. After propagation and splitting, the increased number of particles is reduced through particle combining described in Algorithm~\ref{alg:kl_gmm_reduction}. Combining is repeated until no more particle pairs can be combined. In the measurement-update step, individual particles and their weights are updated using Algorithms~\ref{alg:spmu} and~\ref{alg:weight_update}, respectively. Then, the updated particles are combined using Algorithm~\ref{alg:kl_gmm_reduction}. The single-particle and weight updates can also be performed in parallel, similar to the single-particle propagations. Including the parallel computations, the overall implementation of AMF is illustrated in Algorithm~\ref{alg:parallel_amf_full}.

\begin{algorithm}[h]
\caption{AMF with Parallel Computation}
\label{alg:parallel_amf_full}
\begin{algorithmic}[1]

\REQUIRE Gaussian particles, time interval $\Delta t$, and measurements $\{\mathbf{z}_{t_k}\}_{k=1}^{K}$
\FOR{$k=1$ to $K$}
    \STATE \textbf{Parallel time-update step:}
    \WHILE{$t_{k-1}\leq$ current time $t<t_{k}$}
    \FORALL{particles \textbf{in parallel}}
        \STATE Propagate particles from $t$ to $\min\{t+\Delta t,t_k\}$ using Algorithm~\ref{alg:single_particle_update}, while performing particle splitting using Algorithm~\ref{alg:particle_splitting}.
    \ENDFOR
    \STATE Combine the propagated particles by partitioning them into grid cells and applying Algorithm~\ref{alg:kl_gmm_reduction} within each cell.
    \STATE $t\leftarrow \min\{t+\Delta t,t_k\}$
    \ENDWHILE
    \STATE \textbf{Parallel measurement-update step:}
    \FORALL{particles \textbf{in parallel}}
        \STATE Update particles using Algorithm~\ref{alg:spmu}
        \STATE Update particle weights using Algorithm~\ref{alg:weight_update}
    \ENDFOR
    \STATE Combine the updated particles by partitioning them into grid cells and applying Algorithm~\ref{alg:kl_gmm_reduction} within each cell.
\ENDFOR
\end{algorithmic}
\end{algorithm}

\subsection{Baseline methods} \label{sec:breview}
In this subsection, we introduce the baselines and explain the rationale for their selection. The first baseline is CD-CKF, which was chosen because it is one of the most widely used and accurate sigma-point filters \cite{arasaratnam2010cubature}. Indeed, prior studies have shown its improved numerical stability and superior performance compared with other sigma-point filters, such as the continuous--discrete unscented Kalman filter \cite{arasaratnam2009cubature, arasaratnam2010cubature}. The second baseline is LSKF, which was selected because it was developed more recently than CD-CKF and has been shown to achieve improved performance in challenging radar-tracking problems \cite{wang2021level}. Furthermore, the LSKF is directly relevant to AMF as it is used for single-particle propagation, illustrated in Algorithm~\ref{alg:single_particle_update}.

Besides these baselines, we included the Gaussian sum accurate continuous–discrete extended cubature Kalman filter (GS-ACD-ECKF) as a third baseline, which estimates the system state using multiple Gaussian particles \cite{wang2018accurate}. This filter was selected because it extends the first baseline, CD-CKF, and has been shown to outperform CD-CKF in radar tracking problems \cite{wang2018accurate}. Because the third baseline employs a fixed number of particles, it is necessary to include an additional baseline that adaptively adjusts the number of particles during filtering. Several adaptive strategies based on particle splitting and combining have been proposed \cite{Vittaldev2016spacecraft, Jones2024physics, Dunik2018Directional, Gutierrez2024Class, Kulik2026nonlinearity, wang2023asymmetric, Demars2013entropy}. Among these methods, we adopt APPD \cite{wang2023asymmetric} because it serves as the primary inspiration for AMF. In APPD, a particle is split using an eigen-decomposition approach, while particle combining is performed based on the proximity of particle means. These operations are applied during the time-update step, while the measurement-update step is performed using Algorithms~\ref{alg:spmu} and~\ref{alg:weight_update}. This APPD-based filter (APPDF) was selected as the fourth baseline because it enables the benefits of our newly proposed particle management strategy to be evaluated in isolation from other algorithmic factors present in other methods \cite{tuggle2018automated, faubel2009split, Faubel2010Further, Volker2011versatile, Horwood2011Adaptive, Yu2014Adaptive}.

\section{Results} \label{sec:Results}

In this section, we demonstrate the performance of AMF through four numerical experiments. The first experiment evaluates the accuracy and computational efficiency of the particle propagation--split--combine scheme (Algorithms~\ref{alg:single_particle_update}--\ref{alg:kl_gmm_reduction}) for uncertainty propagation (i.e., the time update) in the Van der Pol oscillator with a slow–fast manifold. Using the same benchmark, the second experiment evaluates the performance of the full filtering algorithm (Algorithm~\ref{alg:parallel_amf_full}), including the measurement update (Algorithms~\ref{alg:spmu} and~\ref{alg:weight_update}). Third, we test the scalability of AMF in higher-dimensional systems using coupled Van der Pol oscillators. Finally, we evaluate AMF on the Lorenz attractor, a chaotic system with multimodal state PDFs. The hyperparameters, such as $\epsilon_{\text{threshold}}$, $W_{\text{threshold}}$, and $C_{\text{threshold}}$, used in the benchmark studies are listed in Supplementary Table~1.


\subsection{Example 1: Uncertainty propagation in the Van der Pol oscillator} \label{sec:example1}

We demonstrate the performance of the particle propagation--split--combine scheme of AMF by solving the uncertainty propagation problem for the Van der Pol oscillator with asymmetric state PDFs. The system is governed by the following SDE:
\begin{equation} \label{eq:vdp_sde_example1}
    d\begin{bmatrix} y_t \\ z_t \end{bmatrix} = 
    \begin{bmatrix} z_t \\ \mu(1-y_t^2)z_t - y_t \end{bmatrix} dt + 
    \sqrt{\mathbf{K}} d\mathbf{W}_t
\end{equation}
\noindent where $\mu$ is the nonlinearity parameter and $\mathbf{W}_t$ denotes a two-dimensional standard Wiener process. The process noise matrix is set to $\mathbf{K} = 0.006 \mathbf{I}_2$ where $\mathbf{I}_n$ is the $n\times n$ identity matrix. The ground-truth state PDFs were obtained by performing the time-update step of PF via the Euler–Maruyama method with 50,000 particles and a time step of $\Delta t=0.05$, as this configuration was sufficient to achieve convergence (Supplementary Fig.~1). The uncertainty propagation method for the time-update step of AMF is compared with those of LSKF \cite{wang2021level}, CD-CKF \cite{arasaratnam2010cubature}, GS-ACD-ECKF \cite{wang2018accurate}, and APPDF \cite{wang2023asymmetric}. All simulations were conducted until $t=30$ across a range of nonlinearity parameters $\mu \in \{0.6, 0.8, 1.0, 1.2, 1.4, 1.6, 1.8, 2.0\}$. Note that this parameter range covers a significantly nonlinear regime, which is typically not considered in prior studies \cite{KANDEPU2008Applying, Wu2023GNSS, EMZIR2023Multidimensional, EMZIR2024Efficient}.

Figs. \ref{fig:fig5}(a) and \ref{fig:fig5}(b) show that AMF tracks the mean system state more accurately than the baselines. Supplementary Video~1 further illustrates this improvement by visualizing the mean state trajectories over time. This improved performance, quantified by the sum of squared errors (SSE) between the estimated mean trajectories and the PF ground truth, remains consistent across different degrees of nonlinearity (Fig. \ref{fig:fig5}(c)). Importantly, while the SSE of the baselines increases rapidly with increasing nonlinearity ($\mu$), AMF maintains a consistently low and stable error level, highlighting its robustness in capturing nonlinear dynamics.

\begin{figure}[!ht]
    \centering
    \includegraphics[width=\linewidth]{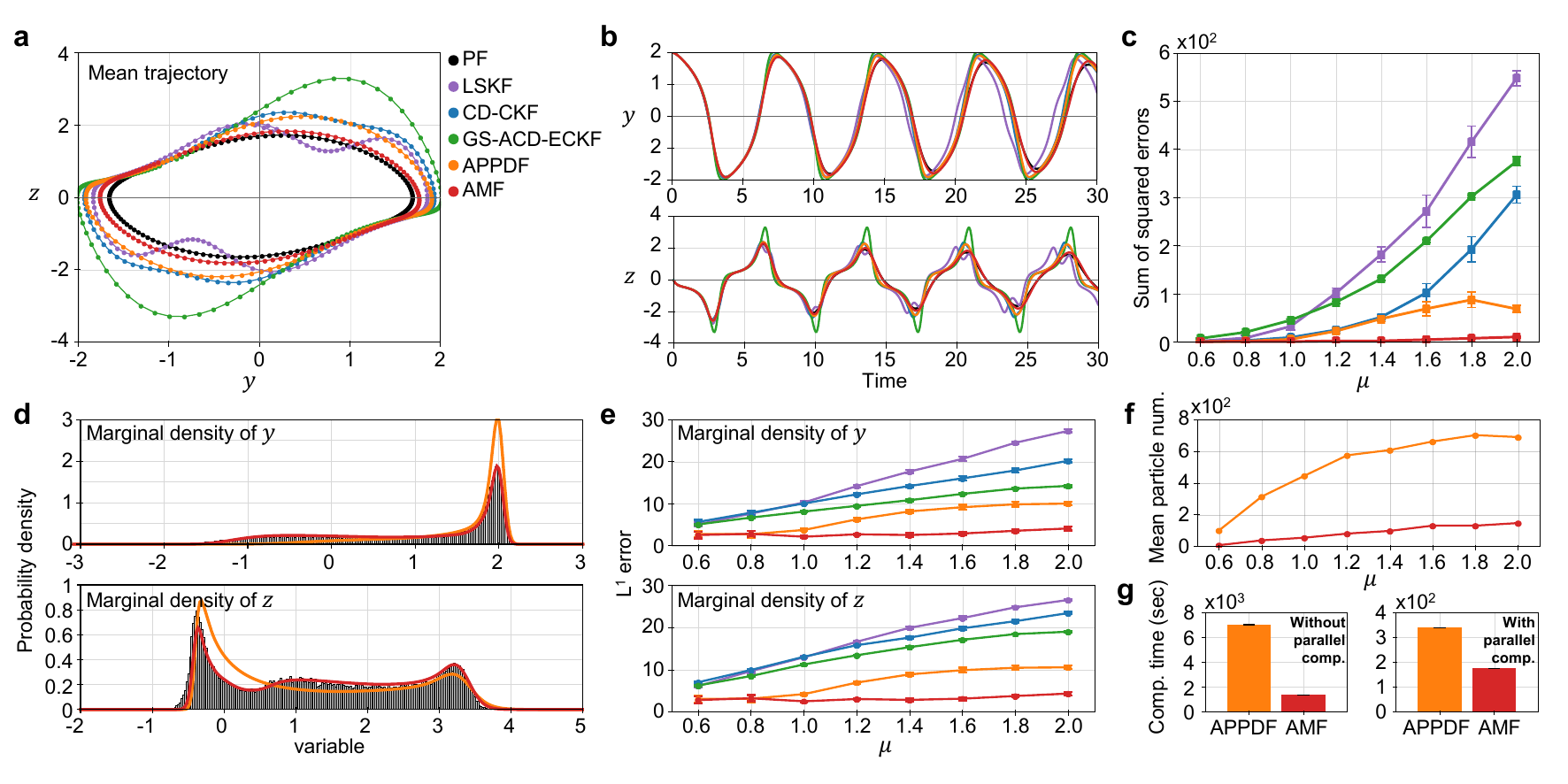}
    \caption{\textbf{Performance of AMF for uncertainty propagation in the Van der Pol oscillator.} 
    \textbf{(a)} Mean state trajectories of AMF, the baselines, and the PF ground truth (GT) for $\mu=1.6$ over $t=20$--$27$.
    \textbf{(b)} Mean trajectories of states $y$ and $z$ over time for $\mu=1.6$.
    \textbf{(c)} SSE between the estimated mean state trajectories and GT for different values of $\mu$.
    \textbf{(d)} Marginal PDFs of states $y$ and $z$ of AMF, APPDF, and GT for $\mu=1.6$ at $t=28.35$.
    \textbf{(e)} $\text{L}^1$ error between the estimated PDFs and GT.
    \textbf{(f)} Mean number of particles during simulation.
    \textbf{(g)} Computational time in sequential and parallel computing settings for $\mu=2.0$ and $\mathbf{K}=0.06\mathbf{I}_2$. Error bars denote the mean $\pm$ standard deviation over five independent runs.}
    \label{fig:fig5}
\end{figure}

Beyond mean propagation, the fidelity of PDF propagation was assessed by comparing the marginal PDFs of states $y$ and $z$. The asymmetric marginal PDFs were captured more accurately by AMF than by APPDF, highlighting the advantages of the particle split--combine scheme proposed in this study (Fig.~\ref{fig:fig5}(d) and Supplementary Video~2). The improvement, quantified by the accumulated $\text{L}^1$ error between the estimated PDFs and the PF ground truth over time, was preserved across different degrees of nonlinearity (Fig. \ref{fig:fig5}(e)). Notably, the accumulated $\text{L}^1$ error for the AMF remained nearly invariant to the increase in $\mu$, whereas the errors for the other methods increased significantly. This confirms that AMF accurately tracks the temporal evolution of non-Gaussian PDFs even under high nonlinearity.

We next compare the mean number of particles during uncertainty propagation between AMF and APPDF. AMF performed uncertainty propagation with fewer particles than APPDF while achieving more accurate propagation (Fig. \ref{fig:fig5}(f) and Supplementary Video~3). Moreover, as the system's nonlinearity increases, the mean number of particles dramatically increases in APPDF. However, AMF shows only a slight increase. This indicates that the number of particles is more effectively regulated by the particle split--combine scheme in AMF (Algorithms~\ref{alg:particle_splitting} and \ref{alg:kl_gmm_reduction}), as guaranteed by \Cref{thm:main_result} and \cite{runnalls2007kullback}. Due to tracking the state PDFs with fewer particles, AMF requires less computational time than APPDF (Fig. \ref{fig:fig5}(g), left). The computational cost can be further dramatically reduced through parallel computing (Fig. \ref{fig:fig5}(g), right), indicating that the algorithmic architecture of AMF is well suited for parallelization. Taken together, these results confirm that AMF offers a practical and high-fidelity solution for state propagation in highly nonlinear systems at a reasonable computational cost.

\subsection{Example 2: Filtering of the Van der Pol oscillator} \label{sec:example2}

We evaluate whether the improved performance of AMF in uncertainty propagation during the time-update step (Fig. \ref{fig:fig5}) is maintained even after measurement updates are performed. The process equation is the same as in Example 1 \eqref{eq:vdp_sde_example1}, and we consider $\mu \in \{1.0, 1.25, 1.5, 1.75, 2.0\}$. Measurements are provided at $t=10$ and $t=20$, and each measurement value is selected to slightly deviate from the ground-truth limit-cycle trajectory in Example 1 to induce a meaningful state correction during the measurement-update step (Supplementary Fig.~2). The relationship between the state $\mathbf{x}_k = [y_k, z_k]^T$ and the measurement vector $\mathbf{z}_k = [r_k, \theta_k]^T$ is defined by the following nonlinear polar coordinate mapping:

\begin{equation} \label{eq:measurement_model1}
    \mathbf{z}_k = h(\mathbf{x}_k) + \mathbf{r}_k, \quad \mathbf{r}_k \sim \mathcal{N}(\mathbf{0}, \mathbf{R})
\end{equation}
\noindent where the measurement function $h(\mathbf{x}_k)$ and the noise covariance matrix $\mathbf{R}$ are given by
\begin{equation} \label{eq:h_and_R}
    h(\mathbf{x}_k) = \begin{bmatrix} \sqrt{y_k^2 + z_k^2} \\ \arctan2(z_k, y_k) \end{bmatrix}, \quad \mathbf{R} = \begin{bmatrix} 0.5^2 & 0 \\ 0 & 0.8^2 \end{bmatrix}.
\end{equation}
\noindent

AMF consistently exhibits improved performance in estimating the filtered mean state across all tested levels of nonlinearity (Figs. \ref{fig:fig6}(a)--\ref{fig:fig6}(c) and Supplementary Video~4). Furthermore, the evolution of state PDFs was filtered more accurately by AMF than by the baselines (Figs. \ref{fig:fig6}(d) and \ref{fig:fig6}(e) and Supplementary Video~5). The improvement becomes more pronounced as the system becomes further nonlinear. In addition to the improvement in accuracy, AMF also shows improved computational efficiency in the full filtering problem. Specifically, AMF tracked the evolution of state PDFs with fewer particles than APPDF (Fig. \ref{fig:fig6}(f) and Supplementary Video~6). AMF thus exhibited reduced computational time (Fig. \ref{fig:fig6}(g), left). The computational time can be further reduced through parallel implementation (Fig. \ref{fig:fig6}(g), right). These results confirm that AMF provides a robust, accurate, and computationally efficient filtering framework for data assimilation in highly nonlinear systems.

\subsection{Example 3: Filtering of the coupled Van der Pol oscillators} \label{sec:example3}

So far, we have confirmed the performance of AMF using a highly nonlinear but $2$--dimensional system (Figs. \ref{fig:fig5} and \ref{fig:fig6}). We next evaluate its performance using higher-dimensional coupled Van der Pol oscillators \cite{Nganso2025White}. The system dynamics are defined by the following $6$--dimensional SDE:

\begin{figure}[!t]
    \centering
    \includegraphics[width=\linewidth]{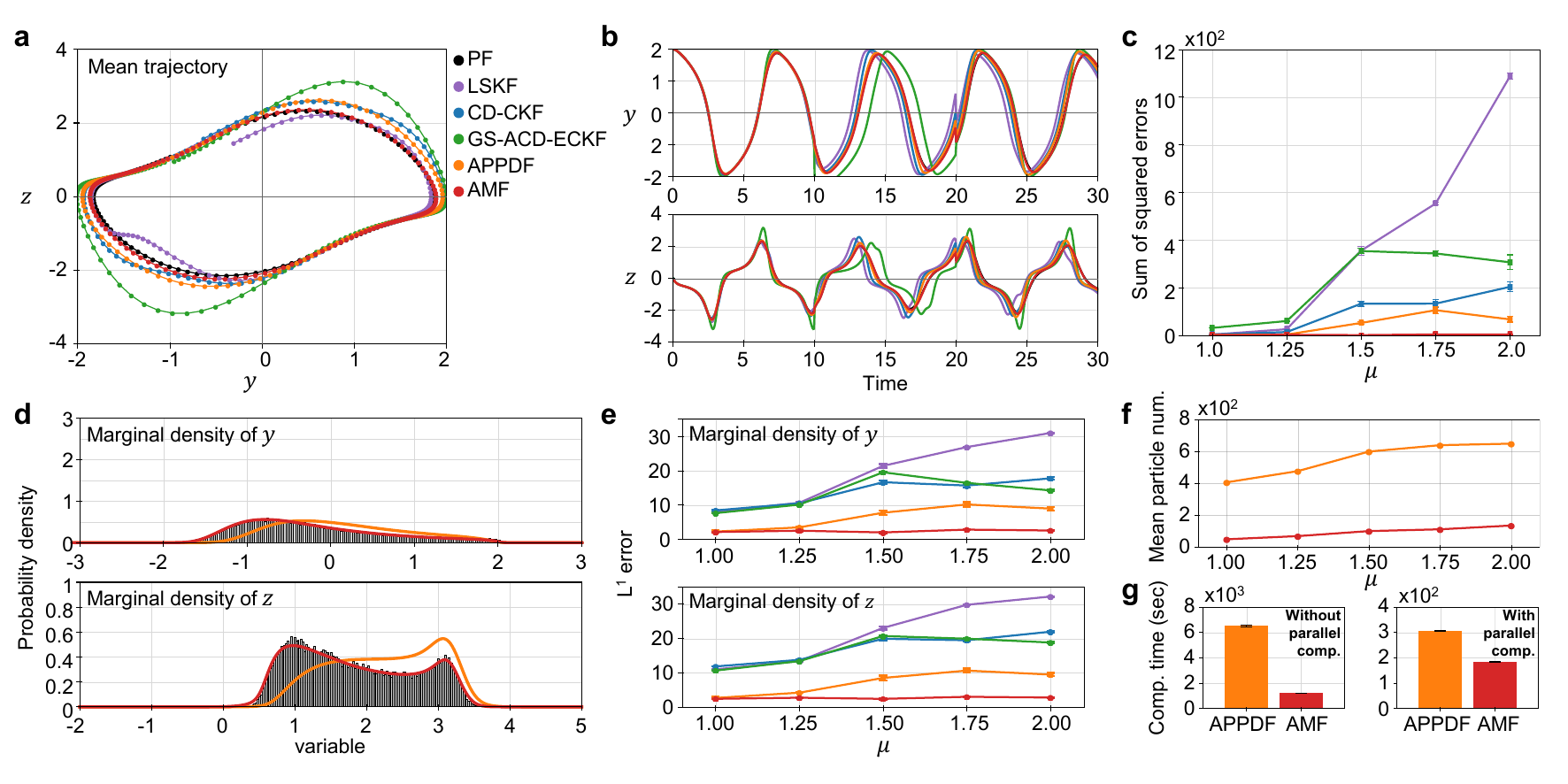}
    \caption{\textbf{Performance of AMF for filtering in the Van der Pol oscillator.} 
    \textbf{(a)} Mean state trajectories of AMF, the baselines, and the PF ground truth (GT) for $\mu=1.5$ over $t=20$--$27$.
    \textbf{(b)} Mean trajectories of states $y$ and $z$ over time for $\mu=1.5$.
    \textbf{(c)} SSE between the estimated mean trajectories and GT for different values of $\mu$.
    \textbf{(d)} Marginal PDFs of states $y$ and $z$ of AMF, APPDF, and GT for $\mu=1.5$ at $t=27.5$.
    \textbf{(e)} $\text{L}^1$ error between the estimated PDFs and GT. AMF maintains a constantly low error across increasing $\mu$.
    \textbf{(f)} Mean number of particles during filtering.
    \textbf{(g)} Computational time in sequential and parallel computing settings for $\mu=2.0$ and $\mathbf{K}=0.06\mathbf{I}_2$. Error bars denote the mean $\pm$ standard deviation over five independent runs.}
    \label{fig:fig6}
\end{figure}

\begin{equation} \label{eq:coupled_vdp_sde}
    d\begin{bmatrix} {y_1}_t \\ {z_1}_t \\ {y_2}_t \\ {z_2}_t \\ {y_3}_t \\ {z_3}_t \end{bmatrix} = 
    \begin{bmatrix} {z_1}_t \\ \mu(1-{y_1}_t^2){z_1}_t - {y_1}_t + k({y_2}_t+{y_3}_t-2{y_1}_t) \\ {z_2}_t \\ \mu(1-{y_2}_t^2){z_2}_t - {y_2}_t + k({y_1}_t+{y_3}_t-2{y_2}_t) \\ {z_3}_t \\ \mu(1-{y_3}_t^2){z_3}_t - {y_3}_t + k({y_1}_t+{y_2}_t-2{y_3}_t)\end{bmatrix} dt + 
    \sqrt{\mathbf{K}} d\mathbf{W}_t
\end{equation}

\noindent where $\mu$ is the nonlinearity parameter, $k$ is the coupling parameter, and $\mathbf{W}_t$ denotes a $6$--dimensional standard Wiener process. The process noise matrix is set to $\mathbf{K} = 0.006 \mathbf{I}_6$. The ground-truth state PDFs were obtained by performing PF with 300,000 particles and a time step of $\Delta t = 0.01$, as this setting was sufficient to achieve convergence (Supplementary Fig.~3). We consider $\mu \in \{1.5, 2.0, 2.5\}$ and $k \in \{0.1, 0.2, 0.3\}$. The filtering ends at $t=10$, and the measurement is provided at $t=5$.
Similar to Example 2 \eqref{eq:measurement_model1}, the relationship between the state $\mathbf{x}_k = [y_{1,k}, z_{1,k}, y_{2,k}, z_{2,k}, y_{3,k}, z_{3,k}]^T$ and the measurement vector $\mathbf{z}_k =[r_{1,k}, \theta_{1,k}, r_{2,k}, \theta_{2,k}, r_{3,k}, \theta_{3,k}]^T$ is defined by the following nonlinear polar coordinate mapping:

\begin{equation} \label{eq:measurement_model2}
    \mathbf{z}_k = h(\mathbf{x}_k) + \mathbf{r}_k, \quad \mathbf{r}_k \sim \mathcal{N}(\mathbf{0}, \mathbf{R})
\end{equation}
\noindent where the measurement function $h(\mathbf{x}_k)$ and the noise covariance matrix $\mathbf{R}$ are given by
\begin{equation} \label{eq:h_and_R2}
    h(\mathbf{x}_k) = \begin{bmatrix} 
    \sqrt{y_{1,k}^2 + z_{1,k}^2} \\ \arctan2(z_{1,k}, y_{1,k}) 
    \\ \sqrt{y_{2,k}^2 + z_{2,k}^2} \\ \arctan2(z_{2,k}, y_{2,k}) 
    \\ \sqrt{y_{3,k}^2 + z_{3,k}^2} \\ \arctan2(z_{3,k}, y_{3,k})
    \end{bmatrix}, \quad \mathbf{R} = 0.2^2 \mathbf{I}_6.
\end{equation}

Fig. \ref{fig:fig7}(a) and Supplementary Video~7 show that AMF tracks the state of the three coupled oscillators more accurately than LSKF and APPDF, on which AMF was developed, for a representative case ($\mu=2$ and $k=0.3$). This improved performance is preserved across different values of the nonlinearity and coupling parameters (Fig. \ref{fig:fig7}(b)). In particular, the SSE of AMF becomes dramatically smaller than those of the other methods as the nonlinearity increases. Notably, the SSE of APPDF grows rapidly and exceeds that of LSKF as the nonlinearity increases (Fig. \ref{fig:fig7}(b)), although the LSKF estimate appears farther from the ground-truth trajectory (Fig. \ref{fig:fig7}(a)). This is due to a phase difference between the APPDF estimate and the ground-truth trajectory, such that similar state values are attained at different times (Supplementary Video~7).

\begin{figure}[!htbp]
    \centering
    \includegraphics[width=\linewidth]{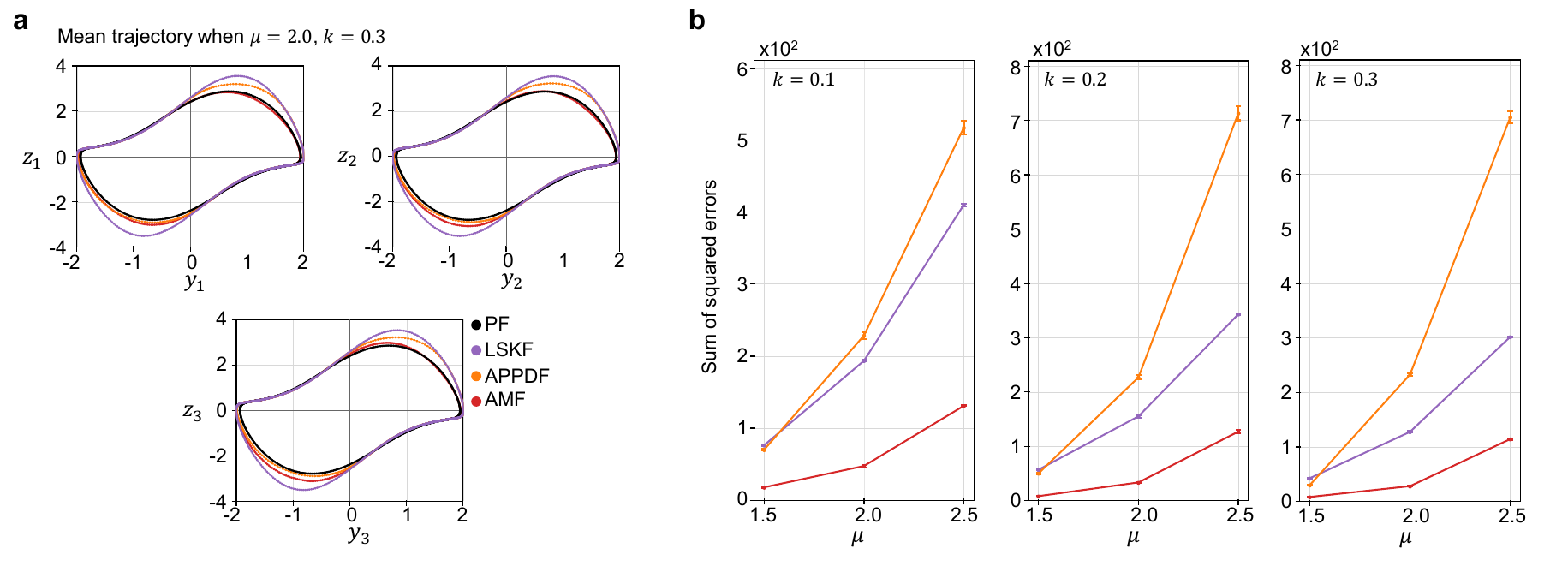}
    \caption{\textbf{Accuracy of AMF for filtering in the coupled Van der Pol oscillators}
    \textbf{(a)} Mean state trajectories of AMF, the baselines, and the PF ground truth (GT) for $\mu=2.0$ and $k=0.3$ over $t=0.5$--$10.0$. 
    \textbf{(b)} SSE between the estimated mean trajectories and GT across different combinations of $\mu$ and $k$. Error bars denote the mean ± standard deviation over five independent runs.}
    \label{fig:fig7}
\end{figure}

The lower accuracy of APPDF compared with LSKF is counterintuitive because APPDF extends LSKF with a particle split--combine scheme based on eigen-decomposition \cite{wang2023asymmetric}. We hypothesize that APPDF leads to excessive particle splitting, resulting in excessive pruning, and ineffective particle combining, leading to the accumulation of numerical errors, whereas AMF does not. To test this hypothesis, we examined changes in the number of particles due to particle splitting ($\Delta N_{\text{split}}$), pruning ($\Delta N_{\text{prune}}$; i.e., removing particles with small weights), and combining ($\Delta N_{\text{combine}}$) during uncertainty propagation in the coupled Van der Pol oscillators (Fig. \ref{fig:fig8}(a)).

Fig. \ref{fig:fig8}(b) shows that the cumulative number of particle splits, $\sum \Delta N_{\mathrm{split}}$, is smaller in AMF than in APPDF. The excessive particle splitting in APPDF generates many small-weight particles, resulting in more particles being pruned than in AMF. AMF thus preserves more information during the pruning step. Furthermore, AMF exhibits a larger $\sum \Delta N_{\mathrm{combine}}$ than APPDF, confirming that it more effectively reduces the number of particles while preserving as much information as possible. These results are also observed in the time series of $\Delta N_{\mathrm{split}}$, $\Delta N_{\mathrm{prune}}$, and $\Delta N_{\mathrm{combine}}$ (Fig. \ref{fig:fig8}(c)). Therefore, AMF requires a lower mean number of particles and lower computational cost than APPDF (Figs. \ref{fig:fig8}(d) and \ref{fig:fig8}(e)), confirming the hypothesis. Specifically, as $\mu$ increases, APPDF performs increasingly excessive particle splitting (Figs. \ref{fig:fig8}(b) and \ref{fig:fig8}(c)). This results in a large number of small-weight particles being generated and subsequently pruned, leading to a reduction in the total number of particles in APPDF (Fig. \ref{fig:fig8}(d)). This reduction results in information loss, which significantly degrades estimation accuracy. In addition, repeated particle splitting accumulates numerical errors. As a result, APPDF becomes less accurate than LSKF in highly nonlinear regimes, while AMF avoids these issues (Fig. \ref{fig:fig7}(b)).

\begin{figure}[!t]
    \centering
    \includegraphics[width=\linewidth]{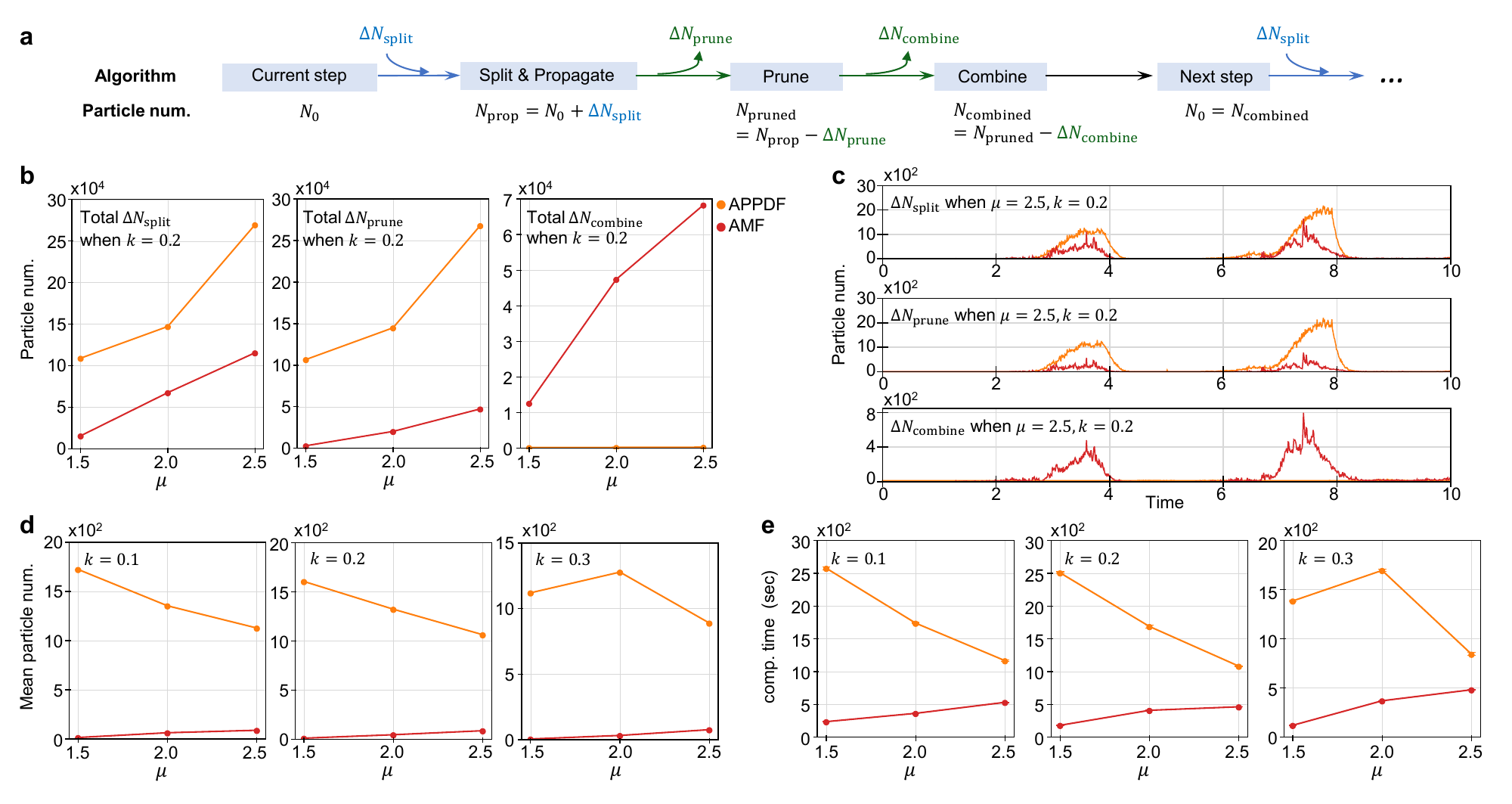}
    \caption{\textbf{Changes in the number of particles due to particle splitting, pruning, and combining.}
    \textbf{(a)} Schematic diagram illustrating the evolution of the particle number during uncertainty propagation using AMF and APPDF. Changes in the particle number due to splitting, pruning, and combining are denoted by $\Delta N_{\mathrm{split}}$, $\Delta N_{\mathrm{prune}}$, and $\Delta N_{\mathrm{combine}}$, respectively.
    \textbf{(b)} Cumulative particle-number changes, $\sum \Delta N_{\mathrm{split}}$, $\sum \Delta N_{\mathrm{prune}}$, and $\sum \Delta N_{\mathrm{combine}}$, for APPDF and AMF across different settings.
    \textbf{(c)} Time series of $\Delta N_{\mathrm{split}}$, $\Delta N_{\mathrm{prune}}$, and $\Delta N_{\mathrm{combine}}$.
    \textbf{(d)} Mean number of particles during simulation.
    \textbf{(e)} Computational time of simulations. Error bars denote the mean ± standard deviation over five independent runs.}
    \label{fig:fig8}
\end{figure}

\subsection{Example 4: Filtering of the Lorenz attractor} 
\label{sec:example4}

We further evaluate AMF using the Lorenz attractor, a representative chaotic system. The system dynamics are described by the following SDE:
\begin{equation} \label{eq:lorenz_sde}
    d
    \begin{bmatrix}
        x_t \\ y_t \\ z_t
    \end{bmatrix}
    =
    \begin{bmatrix}
        \sigma(y_t-x_t) \\
        x_t(\rho-z_t)-y_t \\
        x_ty_t-\beta z_t
    \end{bmatrix} dt
    +
    \sqrt{\mathbf{K}}d\mathbf{W}_t
\end{equation}
where $\sigma=3$, $\rho=26.5$, $\beta=1$, and $\mathbf{W}_t$ denotes a three-dimensional standard Wiener process. The process noise matrix is set to $\mathbf{K}=0.4\mathbf{I}_3$. The ground-truth state PDFs were obtained by performing PF with 500,000 particles and a time step of $\Delta t = 0.01$, as this setting was sufficient to achieve convergence (Supplementary Fig.~4). The simulation ends at $t=20$, and a single measurement is provided at $t=10$. The measurement equation that maps the state vector $\mathbf{x}_k$ to the measurement vector $\mathbf{z}_k$ is given by
\begin{equation}
\mathbf{z}_k = \mathbf{x}_k + \mathbf{r}_k,
\quad
\mathbf{r}_k \sim \mathcal{N}(\mathbf{0}, 0.8^2\mathbf{I}_3).
\end{equation}

Figs. \ref{fig:fig9}(a)--\ref{fig:fig9}(d) show heat maps of the state PDFs estimated by PF, LSKF, APPDF, and AMF, respectively, projected onto the $xy$, $yz$, and $zx$ planes at a representative time point $t=12$. The corresponding marginal PDFs along the $x$, $y$, and $z$ dimensions are compared in Fig. \ref{fig:fig9}(e). The results demonstrate that AMF most accurately captures the asymmetric and multimodal state PDFs. The improved accuracy of AMF is further confirmed by comparing the time evolution of the estimated state PDFs (Supplementary Video~8). For quantitative evaluation, the SSE between the mean trajectory estimated by each method and the ground-truth mean trajectory was computed (Fig. \ref{fig:fig9}(f)). In addition, the $\text{L}^1$ error of the marginal PDFs was calculated for each method (Fig. \ref{fig:fig9}(g)). These results further confirm that AMF provides the most accurate estimates of both the mean trajectory and the state distribution.

Next, we compare the mean number of particles and the total computational time between APPDF and AMF (Fig. \ref{fig:fig9}(h)). This confirms that the proposed split--combine scheme of AMF dramatically reduces computational cost while improving filtering accuracy, consistent with the results obtained for the Van der Pol oscillator (Figs. \ref{fig:fig5}--\ref{fig:fig8}). Taken together, these results demonstrate the potential of AMF to accurately track the state of chaotic systems with asymmetric and multimodal PDFs.

\begin{figure}[!t]
    \centering
    \includegraphics[width=\linewidth]{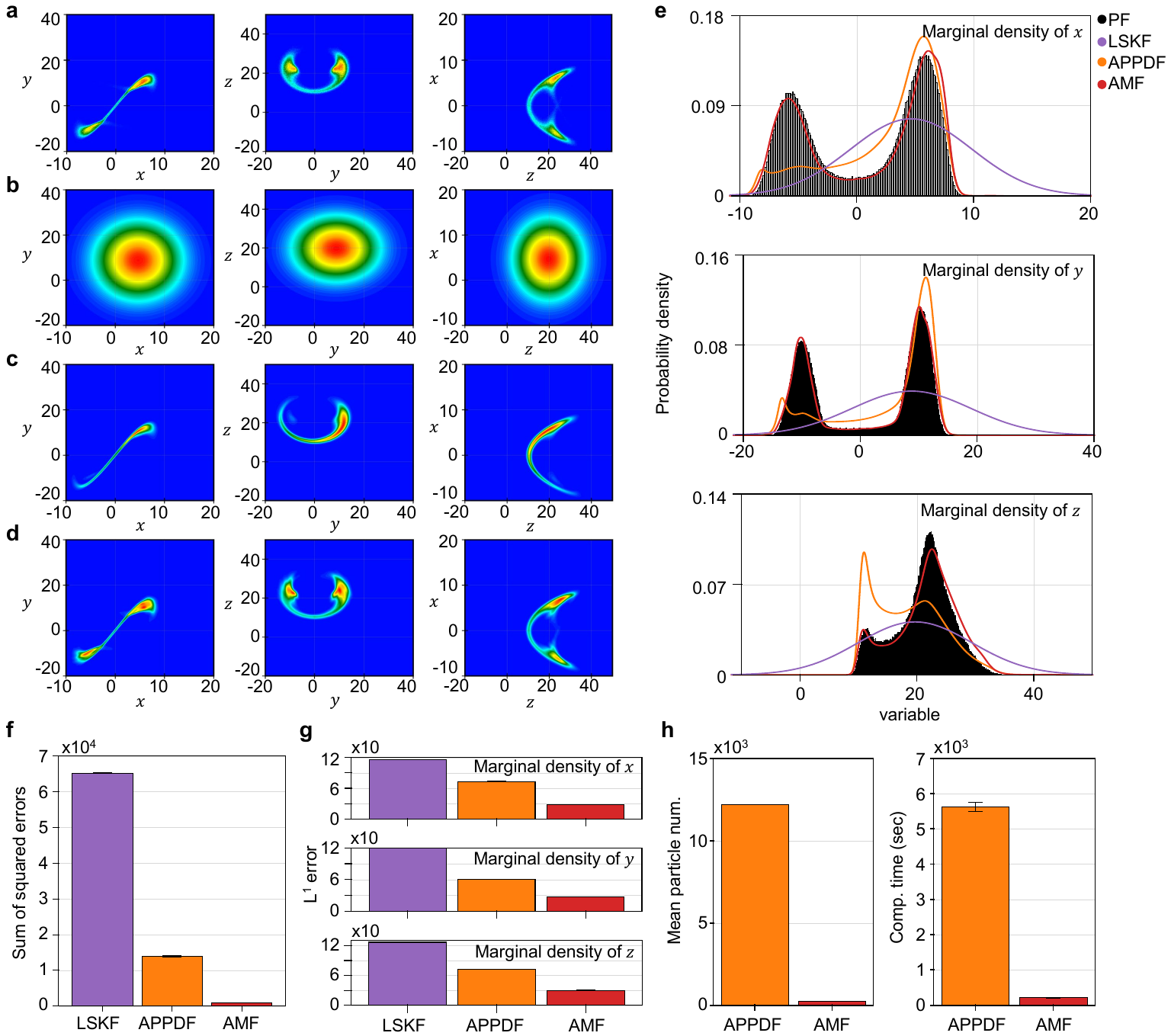}
    \caption{\textbf{Performance of AMF for filtering in the Lorenz attractor.}
    \textbf{(a)--(d)} Heat maps of the state PDFs projected onto the $xy$-, $yz$-, and $zx$-planes for PF (a), LSKF (b), APPDF (c), and AMF (d), respectively, at $t=12$.
    \textbf{(e)} Marginal PDFs of states $x$, $y$, and $z$ at $t=12$.
    \textbf{(f)} SSE between the estimated mean trajectories and the PF ground truth (GT).
    \textbf{(g)} $\text{L}^1$ error between the estimated marginal PDFs and GT.
    \textbf{(h)} Mean number of particles during filtering (left) and computational time (right) for APPDF and AMF. Error bars denote the mean ± standard deviation over five independent runs.}
    \label{fig:fig9}
\end{figure}

\section{Conclusion} \label{sec:Conclusion}
In this work, we developed AMF for estimating asymmetric and multimodal posterior densities of the system state over time (Fig. \ref{fig:fig1}). The core of AMF is an adaptive split–combine scheme that dynamically adjusts the number and weights of Gaussian particles during both time-update and measurement-update steps. In particular, the splitting method guarantees a reduction in variance along a target level-set-point direction of a particle (\Cref{thm:main_result}) without any online auxiliary optimization (Algorithm~\ref{alg:particle_splitting}). Moreover, the combining method efficiently evaluates the similarity between densities using the upper bound on the KL divergence (Algorithm~\ref{alg:kl_gmm_reduction}). Taken together, these features lead to improved performance in terms of both accuracy and computational efficiency (Figs. \ref{fig:fig5}--\ref{fig:fig9}).

AMF overcomes several limitations of existing filters and uncertainty propagation methods. Specifically, prior methods perform particle splitting along the eigenvector direction, which only indirectly reduces local nonlinearity in the target level-set-point direction \cite{faubel2009split, wang2023asymmetric}. This can result in excessive particle splitting, the accumulation of splitting errors, and consequently reduced filtering accuracy (Figs. \ref{fig:fig5}--\ref{fig:fig9}). In addition, particle combining based solely on mean proximity can introduce numerical errors (Fig. \ref{fig:fig3}). These limitations are addressed by AMF (\Cref{thm:main_result} and Algorithms~\ref{alg:particle_splitting}--\ref{alg:weight_update}). Furthermore, AMF improves computational efficiency (Figs. \ref{fig:fig5}(g) and \ref{fig:fig6}(g)) through a parallel computing framework (Algorithm~\ref{alg:parallel_amf_full}), exploiting the independence of split-and-propagate operations across particles.

AMF outperformed existing methods across challenging benchmarks with asymmetric and multimodal state PDFs (Figs. \ref{fig:fig5}--\ref{fig:fig9}). Notably, it achieved improved accuracy with fewer particles, highlighting the effectiveness of the proposed split–combine scheme. We also showed the scalability of AMF to high-dimensional systems by successfully filtering $6$--dimensional coupled oscillators using CPU-based parallelization (Fig. \ref{fig:fig7}). This scalability could be further extended to even higher-dimensional complex systems, such as neuronal population dynamics, through a GPU-based implementation \cite{rodriguez2015opencl}, representing a promising direction for future research.

Despite the progress made in this study, there remains room for improvement in filtering methods, including AMF. First, the process equation is assumed to have a constant noise level over time. Relaxing this assumption is particularly important for biological systems; for example, the level of biochemical reaction noise depends on both the molecular copy numbers \cite{schnoerr2017approximation}. One potential approach is to employ the Lamperti transform, which reformulates the SDE into an equivalent form with constant diffusion \cite{moller2010state}. Second, filtering methods generally require a known governing SDE; however, in many practical applications, the underlying dynamics are not available $a$ $priori$. To address this, data-driven inference of the latent SDE, using methods such as SDE Matching \cite{bartosh2025sde}, should precede filtering. Third, measurement updates in ultra-high-dimensional systems (e.g., with more than $10^5$ state variables) could become inaccurate because the Gaussian-assumed cubature rule may lose accuracy in such settings. Thus, developing scalable measurement-update methods remains a challenge. One promising avenue is the integration of score-based generative modeling into filtering, as it enables efficient sampling from high-dimensional PDFs \cite{rozet2023score, bao2024score, yoon2026rethinking}.

We showed that AMF accurately estimates the time evolution of state PDFs in both the Van der Pol oscillator and the Lorenz system. Indeed, variants of these systems have been widely used to model real-world nonlinear phenomena across multiple scales, ranging from neuronal dynamics to atmospheric convection. This suggests that AMF has the potential to address real-world data assimilation problems. For example, it could be applied to estimate endogenous circadian rhythms \cite{kim2023wearable, mayer2025predicting, huang2021predicting} and use these estimates to assess human health conditions \cite{lee2024real, lim2024accurately}. Furthermore, AMF could be used for weather forecasting \cite{van2009particle} and to enhance the spatial and temporal resolution of neural data \cite{woolrich2009bayesian}.

Beyond data assimilation, AMF also provides an accurate framework for simulating uncertainty propagation (Figs. \ref{fig:fig5} and \ref{fig:fig8}). This capability could be leveraged for continuous-time generative modeling, where sample generation is formulated as the evolution of probability densities over time \cite{song2020score, lipman2022flow, albergo2025stochastic}. Taken together, these findings demonstrate that AMF provides a flexible and scalable framework for modeling probability density evolution in nonlinear dynamical systems.

\section{CRediT authorship contribution statement} \label{sec:act}
\textbf{San Kim}: Methodology, Software, Validation, Formal analysis, Investigation, Writing--original draft, Writing--review \& editing, Visualization; \textbf{Won Chang}: Conceptualization, Writing--review \& editing, Funding acquisition; \textbf{Daniel B. Forger}: Conceptualization, Writing--review \& editing, Funding acquisition; \textbf{Dae Wook Kim}: Conceptualization, Methodology, Validation, Formal analysis, Investigation, Writing--original draft, Writing--review \& editing, Visualization, Supervision, Project administration, Funding acquisition.

\section{Data availability} \label{sec:dai}
All code used to perform the simulations will be made freely available on GitHub upon acceptance of the manuscript.

\section{Declaration of competing interest} \label{sec:dci}
The authors declare that they have no known competing financial interests or personal relationships that could have appeared to influence the work reported in this paper.

\section{Acknowledgments} \label{sec:ack}
This work was supported by the National Research Foundation of Korea (Grant No. RS-2023-00301976, RS-2025-00523567, RS-2025-00561696, RS-2025-02215354, RS-2026-25512726, and RS-2026-25522728), the Korea Basic Science Institute (Grant No. RS-2026-25500300), the New Faculty Startup Fund from KAIST (Grant No. G04240060), the New Faculty Startup Fund from Seoul National University (Grant No. 326-20240027), and the U.S. National Science Foundation (Grant No. DMS-2052499).


\bibliographystyle{elsarticle-num} 
\bibliography{references}
\clearpage

\setcounter{figure}{0}
\renewcommand{\figurename}{Supplementary Figure}
\renewcommand{\thefigure}{\arabic{figure}}

\begin{table}[H]
\centering
\small
\setlength{\tabcolsep}{3pt}
\begin{tabular}{lccccc}
\toprule
System
& \makecell{$\epsilon_{\mathrm{threshold}}$}
& \makecell{$W_{\mathrm{threshold}}$}
& \makecell{$C_{\mathrm{threshold}}$}
& \makecell{$r$}
& \makecell{Grid cell \\ volume} \\
\midrule
\makecell[l]{Van der Pol oscillator}
& $0.05$ & $10^{-4}$ & $0.03$ & $0.005$ & $0.04^2$ \\
\makecell[l]{Coupled Van der Pol\\ oscillator}
& $0.05$ & $10^{-4}$ & $0.05$ & $0.005$ & $1^6$ \\
Lorenz attractor 
& $0.05$ & $10^{-5}$ & $0.05$ & $0.05$ & $1^3$ \\
\bottomrule
\end{tabular}
\caption*{{Supplementary Table 1:} \textbf{Hyperparameter settings used in the numerical experiments.} $\epsilon_{\mathrm{threshold}}$ and $W_{\mathrm{threshold}}$ are thresholds for particle splitting and pruning, respectively, used in both AMF and APPDF. $C_{\mathrm{threshold}}$ is the combining threshold used in AMF, and $r$ is the combining radius used in APPDF. The grid cell volume used to discretize the effective support of the state space for particle combining in both AMF and APPDF is reported. The value of $\epsilon_{\mathrm{threshold}}$ is adopted from Wang and Forger (Ref.~42 in the main text), which inspired this study. The value of $W_{\mathrm{threshold}}$ is chosen such that a particle with weight 1 can be split at least 6 and at most 20 times. In the Lorenz attractor case, $W_{\mathrm{threshold}}$ is set to a smaller value because the effective support of the state space is larger than that of the Van der Pol oscillator (Supplementary Videos~2, 5, and 8). $C_{\mathrm{threshold}}$, $r$, and the grid cell volume are selected according to the size of the effective support in the state space to ensure appropriate particle combining for each system.}
\end{table}
\clearpage

\begin{figure}[H]
    \centering
    \includegraphics[width=\linewidth]{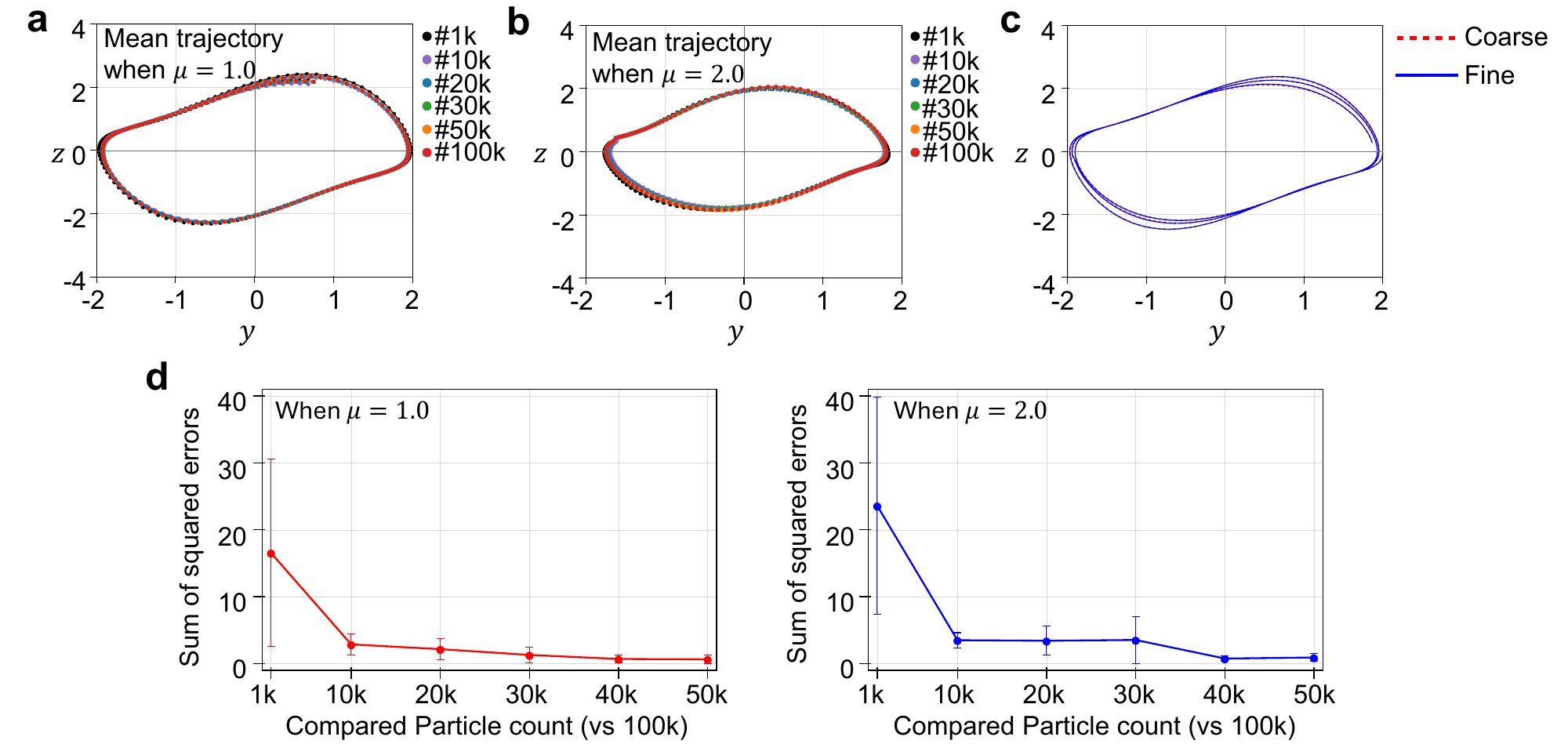}
    \caption{\textbf{Convergence of PF for the Van der Pol oscillator.} To assess the convergence of PF, we examine how the mean state trajectory depends on the number of particles and the numerical integration time step for the Van der Pol oscillator with $\mu=1.0$ and $\mu=2.0$. \textbf{(a,\hspace{0.02cm}b)} Mean state trajectories obtained with 1k, 10k, 20k, 30k, 50k, and 100k particles using $dt=0.05$, plotted over $t=5$--$12.5$, for $\mu=1.0$ (a) and $\mu=2.0$ (b). \textbf{(c)} Mean state trajectories computed using coarse ($dt=0.05$) and fine ($dt=0.001$) integration time steps with 50k particles, plotted over $t=0$--$20$. The coarse and fine trajectories nearly overlap, with a sum of squared error (SSE) of 0.924 between the two trajectories. \textbf{(d)} SSE between trajectories obtained with 1k, 10k, 20k, 30k, and 50k particles and the 100k-particle reference trajectory over $t=0$--$20$ for $\mu=1.0$ (left) and $\mu=2.0$ (right). Error bars denote the mean $\pm$ standard deviation over ten independent runs. For both values of $\mu$, the SSE decreases as the number of particles increases and becomes nearly zero from 40k particles onward. Based on these convergence results, we use 50k particles and $dt=0.05$ to implement PF in Examples 1 and 2.}
    \label{fig:supple_fig1}
\end{figure}

\begin{figure}[H]
    \centering
    \includegraphics[width=\linewidth]{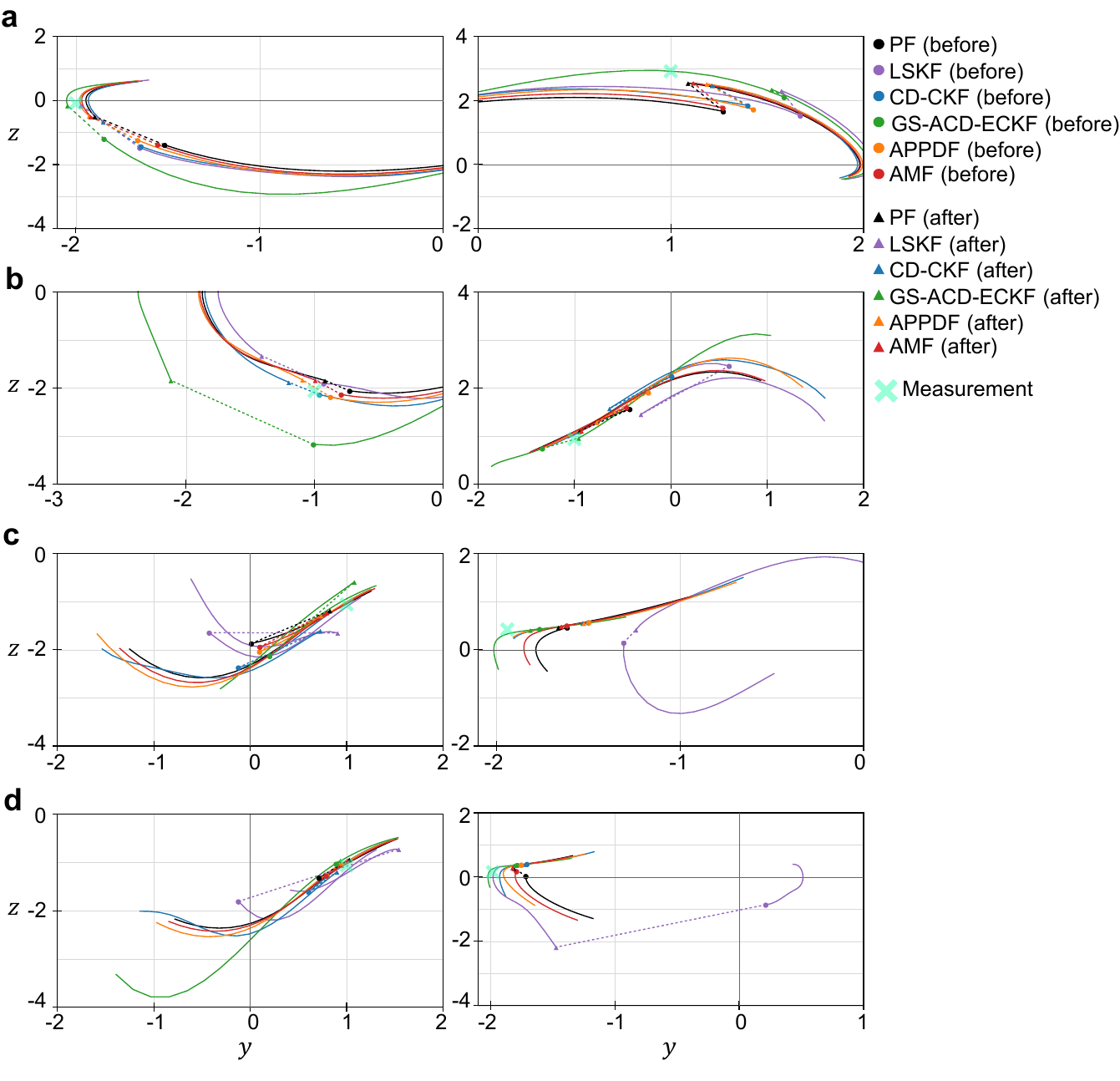}
    \caption{\textbf{Effect of the measurement update on filter trajectories in the Van der Pol oscillator.} Filter trajectories are shown for different values of the nonlinearity parameter $\mu$: (a) $\mu=1.25$, (b) $\mu=1.5$, (c) $\mu=1.75$, and (d) $\mu=2.0$. For each $\mu$, measurements are assimilated at $t=10$ and $t=20$. The subplots in the first and second columns show trajectories over $t=9$--$11$ and $t=19$--$21$, respectively. In each subplot, the measurement is marked by a single cross. Along each filter trajectory, the mean states immediately before and after the measurement update are marked by a circle and a triangle, respectively. Dotted lines connect the pre-measurement-update and post-measurement-update states, visualizing the displacement induced by the measurement update. This shows that the measurement update leads to a non-negligible state correction. This indicates that the measurement-update step also contributes to the filtering performance, in addition to the time-update step, in our experimental settings.}
    \label{fig:supple_fig2}
\end{figure}
\clearpage

\begin{figure}[H]
    \centering
    \includegraphics[width=\linewidth]{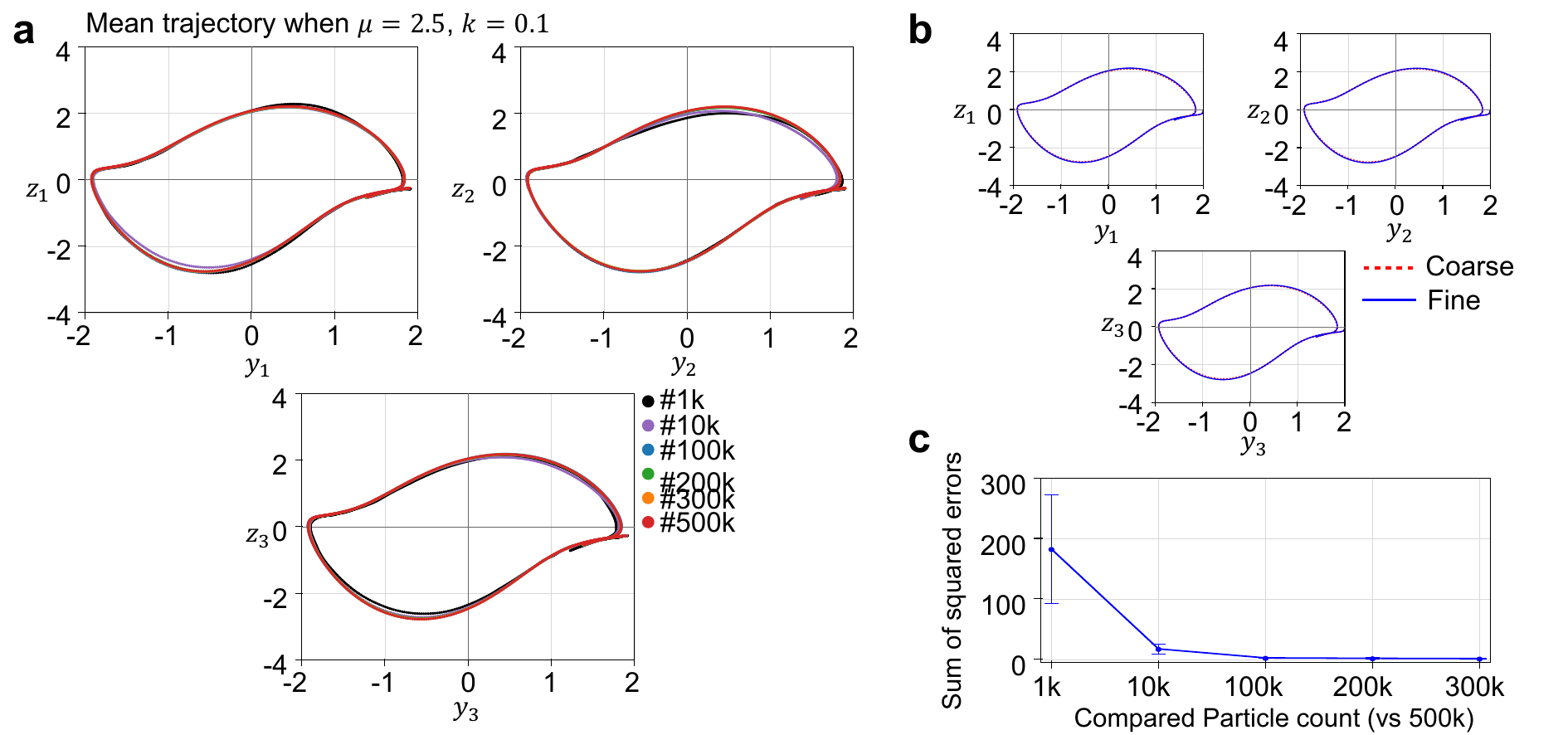}
    \caption{\textbf{Convergence of PF for the coupled Van der Pol oscillators.} To assess the convergence of PF, we examine how the mean state trajectory depends on the number of particles and the numerical integration time step for the coupled Van der Pol oscillators with $\mu=2.5$ and $k=0.1$, corresponding to the most nonlinear case considered in Example 3. \textbf{(a)} Mean state trajectories of each oscillator obtained with 1k, 10k, 100k, 200k, 300k, and 500k particles using $dt=0.01$, plotted over $t=0.5$--$10$. \textbf{(b)} Mean state trajectories computed using coarse ($dt=0.01$) and fine ($dt=0.0005$) integration time steps with 300k particles, plotted over $t=0$--$10$. The coarse and fine trajectories nearly overlap, with a SSE of 1.796. \textbf{(c)} SSE between trajectories obtained with 1k, 10k, 100k, 200k, and 300k particles and the 500k-particle reference trajectory over $t=0$--$10$ for $\mu=2.5$ and $k=0.1$. The SSE decreases as the number of particles increases and becomes nearly zero from 100k particles onward. Based on these convergence results, we use 300k particles and $dt=0.01$ to implement PF in Example 3.}
    \label{fig:supple_fig3}
\end{figure}
\clearpage

\begin{figure}[H]
    \centering
    \includegraphics[width=\linewidth]{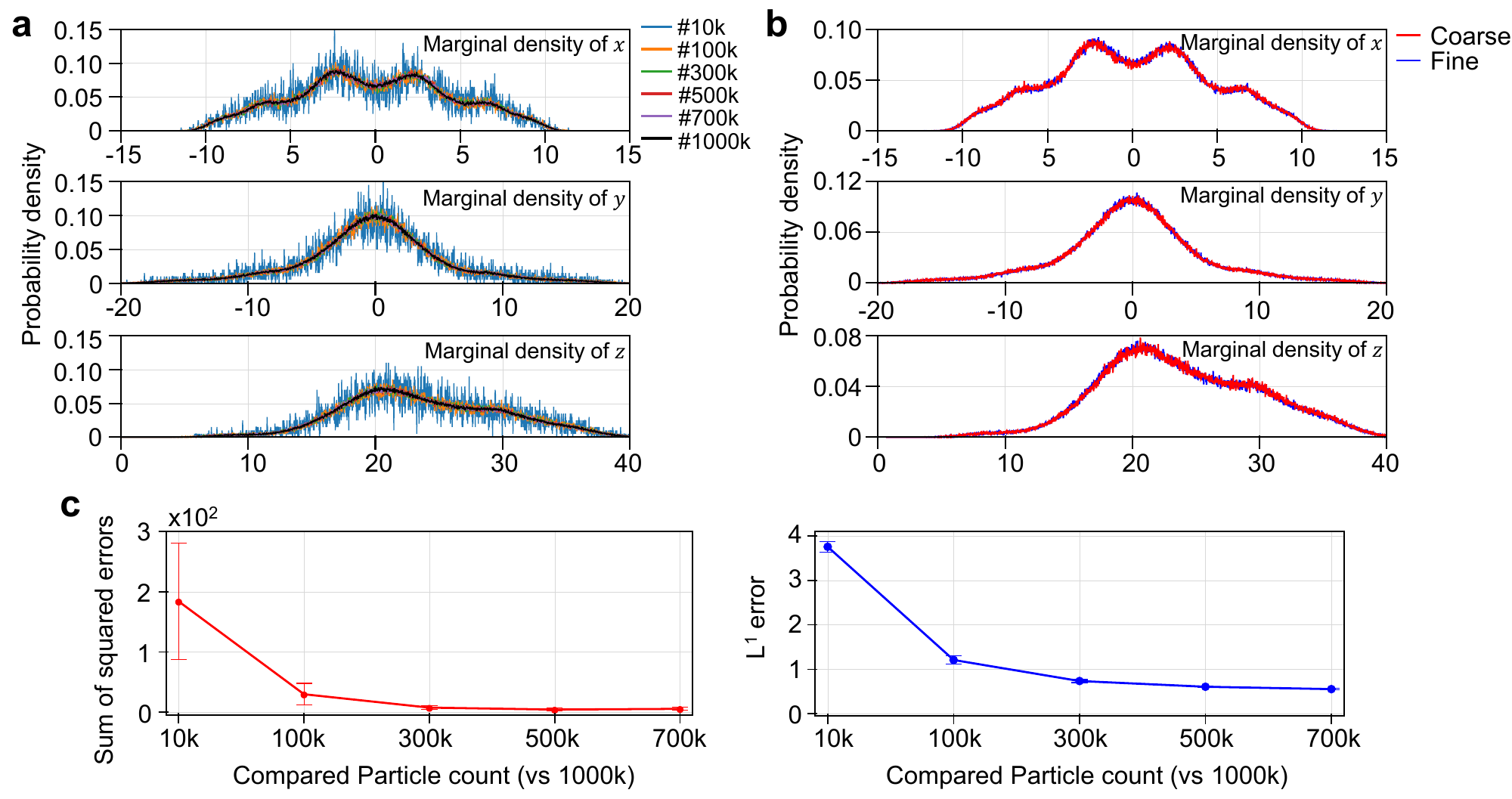}
    \caption{\textbf{Convergence of PF for the Lorenz Attractor.} To assess the convergence of PF, we examine how the mean state trajectory and marginal densities of $x$, $y$, and $z$ depend on the number of particles and the numerical integration time step for the Lorenz attractor. \textbf{(a)} Marginal densities of $x$, $y$, and $z$ obtained with 10k, 100k, 300k, 500k, 700k, and 1000k particles using $dt=0.01$, plotted at $t=15$. \textbf{(b)} Marginal densities of $x$, $y$, and $z$ computed using coarse ($dt=0.01$) and fine ($dt=0.0005$) integration time steps with 500k particles, plotted at $t=15$. The SSE between the coarse and fine mean state trajectories is 10.78, and the sum of $\text{L}^1$ norm between the marginal densities at $t=10,11,\ldots,20$ is 0.715, indicating close agreement between the coarse and fine simulations. \textbf{(c)} SSE between mean state trajectories (left) and $\text{L}^1$ errors between the marginal densities of $x$, $y$, and $z$ (right), computed using 10k, 100k, 300k, 500k, and 700k particles relative to the 1000k-particle reference result over $t=0$--$20$. The $\text{L}^1$ error is computed by summing the $\text{L}^1$ differences between the marginal densities at $t=10,11,\ldots,20$, as in (b). Both the SSE and the $\text{L}^1$ error converge from 300k particles onward. Based on these convergence results, we use 500k particles and $dt=0.01$ to implement PF in Example 4.}
    \label{fig:supple_fig4}
\end{figure}
\clearpage

\captionsetup{type=figure}

\captionof*{figure}{{Supplementary Video 1.} \textbf{Temporal evolution of mean state trajectories for uncertainty propagation in the Van der Pol oscillator.} Mean state trajectories of AMF, the baselines, and the PF ground truth for $\mu=1.6$ over $t=0$--$30$.}

\captionof*{figure}{{Supplementary Video 2.} \textbf{Temporal evolution of joint and marginal PDFs for uncertainty propagation in the Van der Pol oscillator.} Joint PDFs of the PF ground truth, APPDF, and AMF are shown in the top-left, top-middle, and top-right panels, respectively. Comparisons of the $y$- and $z$-marginal PDF are shown in the bottom-left and bottom-right panels, respectively. Results are shown for $\mu=1.6$ over $t=0$--$30$.}

\captionof*{figure}{{Supplementary Video 3.} \textbf{Temporal evolution of the level sets and the number of particles for uncertainty propagation in the Van der Pol oscillator.} Adaptive changes in the level sets of APPDF and AMF are shown in the top-left and bottom-left panels, respectively. Adaptive changes in the particle numbers of APPDF and AMF are shown in the top-right and bottom-right panels, respectively. Results are shown for $\mu=1.6$ over $t=0$--$30$.}

\captionof*{figure}{{Supplementary Video 4.} \textbf{Temporal evolution of mean state trajectories for filtering in the Van der Pol oscillator.} Mean state trajectories of AMF, the baselines, and the PF ground truth for $\mu=1.5$ over $t=0$--$30$.}

\captionof*{figure}{{Supplementary Video 5.} \textbf{Temporal evolution of joint and marginal PDFs for filtering in the Van der Pol oscillator.} Joint PDFs of the PF ground truth, APPDF, and AMF are shown in the top-left, top-middle, and top-right panels, respectively. Comparisons of the $y$- and $z$-marginal PDF are shown in the bottom-left and bottom-right panels, respectively. Results are shown for $\mu=1.5$ over $t=0$--$30$.}

\captionof*{figure}{{Supplementary Video 6.} \textbf{Temporal evolution of the level sets and the number of particles for filtering in the Van der Pol oscillator.} Adaptive changes in the level sets of APPDF and AMF are shown in the top-left and bottom-left panels, respectively. Adaptive changes in the particle numbers of APPDF and AMF are shown in the top-right and bottom-right panels, respectively. Results are shown for $\mu=1.5$ over $t=0$--$30$.}

\captionof*{figure}{{Supplementary Video 7.} \textbf{Temporal evolution of mean state trajectories for filtering in the coupled Van der Pol oscillator.} Mean state trajectories of AMF, the baselines, and the PF ground truth for $\mu=2.0$ and $k=0.3$ over $t=0$--$10$.}

\captionof*{figure}{{Supplementary Video 8.} \textbf{Temporal evolution of joint PDFs for filtering in the Lorenz attractor.} The $xy$-, $yz$-, and $zx$-joint PDFs are shown for the PF ground truth, LSKF, APPDF, and AMF from top to bottom, respectively, over $t=0$--$20$.}


\appendix
\numberwithin{equation}{section}

\section{Gaussian level-set invariance}
\label{app:app1}

Let us assume that the initial probability density function $u(\mathbf{x},0)$ is Gaussian. The factorization $\mathbf{M}(0)$ of the initial covariance matrix $\boldsymbol{\Sigma}(0)$ is denoted by:
\begin{equation} \label{eq:factorization}
    \mathbf{M}(0) = \begin{bmatrix} \mathbf{x}_1(0) & \cdots & \mathbf{x}_d(0) \end{bmatrix}.
\end{equation}
It is straightforward to show that the column vectors $\mathbf{x}_i(0)$ lie on the same level set:
\begin{align*}
    \mathbf{x}_i(0)^T \boldsymbol{\Sigma}(0)^{-1} \mathbf{x}_i(0) &= \mathbf{x}_i(0)^T \mathbf{M}(0)^{-T} \mathbf{M}(0)^{-1} \mathbf{x}_i(0) \\
    &= \left( \mathbf{M}(0)^{-1} \mathbf{x}_i(0) \right)^T \left( \mathbf{M}(0)^{-1} \mathbf{x}_i(0) \right) \\
    &= \mathbf{e}_i^T \mathbf{e}_i = 1,
\end{align*}
\noindent where $\mathbf{e}_i$ is the $i$-th standard basis vector. We next consider the evolution of these vectors. Under the local linear approximation $\mathbf{v(x)} \approx \mathbf{Jx}$, the velocity field $\mathbf{v}_L$ is linear with respect to $\mathbf{x}$, as described in Sec. \ref{sec:spp} of the main text. Hence, the flow map from time $0$ to $t$ can be represented by a linear transition matrix $\mathbf{A}(t)$, such that $\mathbf{x}_i(t) = \mathbf{A}(t)\mathbf{x}_i(0)$. The covariance matrix $\boldsymbol{\Sigma}(t)$ then evolves via congruence transformation:
\begin{equation} \label{eq:cov_update}
    \boldsymbol{\Sigma}(t) = \mathbf{M}(t)\mathbf{M}(t)^T = \left(\mathbf{A}(t)\mathbf{M}(0)\right) \left(\mathbf{A}(t)\mathbf{M}(0)\right)^T = \mathbf{A}(t) \boldsymbol{\Sigma}(0) \mathbf{A}(t)^T.
\end{equation}
By substituting $\mathbf{x}_i(t)$ and $\boldsymbol{\Sigma}(t)$ into the quadratic form, we observe that the value is invariant:
\begin{align*}
    \mathbf{x}_i(t)^T \boldsymbol{\Sigma}(t)^{-1} \mathbf{x}_i(t) 
    &= \left( \mathbf{A}(t)\mathbf{x}_i(0) \right)^T \left( \mathbf{A}(t) \boldsymbol{\Sigma}(0) \mathbf{A}(t)^T \right)^{-1} \left( \mathbf{A}(t)\mathbf{x}_i(0) \right) \\
    &= \mathbf{x}_i(0)^T \mathbf{A}(t)^T \mathbf{A}(t)^{-T} \boldsymbol{\Sigma}(0)^{-1} \mathbf{A}(t)^{-1} \mathbf{A}(t) \mathbf{x}_i(0) \\
    &= \mathbf{x}_i(0)^T \boldsymbol{\Sigma}(0)^{-1} \mathbf{x}_i(0)\\
    &= 1.
\end{align*}
Therefore, tracking the propagation of a Gaussian particle is equivalent to tracking its level set.

\section{Interpretation of the numerator in \eqref{eq:relative_error}}
\label{app:app2}
Here, we show that the numerator in \eqref{eq:relative_error} represents the sum of all higher-order ($\geq2$) terms arising from the Taylor expansion of the drift function $\mathbf{v}$, scaled by a constant factor. Let $\mathbf{v}:\mathbb{R}^d \to \mathbb{R}^d$ be a drift function and let $\Delta\mathbf{x}^{\hat{\imath}}$ be denoted by $\boldsymbol{\delta}$ for brevity. The Taylor expansion of $\mathbf{v}(\mathbf{x}+\boldsymbol{\delta})$ around $\mathbf{x}$ can be written as
\begin{equation} \label{eq:taylor_drift}
\mathbf{v}(\mathbf{x}+\boldsymbol{\delta})
=
\mathbf{v}(\mathbf{x})
+
\sum_{n=1}^{\infty}
\frac{1}{n!}
D^n\mathbf{v}(\mathbf{x})
[\boldsymbol{\delta},\ldots,\boldsymbol{\delta}]
\end{equation}
where $D^n\mathbf{v}(\mathbf{x})$ denotes the $n$-th order derivative, which is an $n$-linear map
\[
D^n\mathbf{v}(\mathbf{x}) :
(\mathbb{R}^d)^n \to \mathbb{R}^d,
\]
defined by
\[
D^n\mathbf{v}(\mathbf{x})[\mathbf{h}_1,\ldots,\mathbf{h}_n]
=
\sum_{i_1,\ldots,i_n=1}^d
\frac{\partial^n \mathbf{v}}{\partial x_{i_1}\cdots\partial x_{i_n}}(\mathbf{x})
\,h_{1,i_1}\cdots h_{n,i_n}.
\]

We now perform a Taylor expansion of each term appearing in the numerator of \eqref{eq:relative_error}.

\textbf{1. The first term:} Substituting $2\boldsymbol{\delta}$ into \eqref{eq:taylor_drift} yields
\begin{align}
\frac12\bigl(\mathbf{v}(\mathbf{x}+2\boldsymbol{\delta})-\mathbf{v}(\mathbf{x})\bigr)
&=
\frac12
\sum_{n=1}^{\infty}
\frac{1}{n!}
D^n\mathbf{v}(\mathbf{x})
[2\boldsymbol{\delta},\ldots,2\boldsymbol{\delta}] \\
&=
\sum_{n=1}^{\infty}
\frac{2^{\,n-1}}{n!}
D^n\mathbf{v}(\mathbf{x})
[\boldsymbol{\delta},\ldots,\boldsymbol{\delta}].
\end{align}

\textbf{2. The second term:} \eqref{eq:taylor_drift} yields
\begin{equation}
\mathbf{v}(\mathbf{x}+\boldsymbol{\delta})-\mathbf{v}(\mathbf{x})
=
\sum_{n=1}^{\infty}
\frac{1}{n!}
D^n\mathbf{v}(\mathbf{x})
[\boldsymbol{\delta},\ldots,\boldsymbol{\delta}].
\end{equation}

Hence, the numerator in \eqref{eq:relative_error} is given by:
\begin{align}
\left\| \left[
\frac12(\mathbf{v}(\mathbf{x}+2\boldsymbol{\delta})-\mathbf{v}(\mathbf{x}))
\right]
-
\left[
\mathbf{v}(\mathbf{x}+\boldsymbol{\delta})-\mathbf{v}(\mathbf{x})
\right] \right\|\\
=
\left\| \sum_{n=1}^{\infty}
\left(
\frac{2^{n-1}-1}{n!}
\right)
D^n\mathbf{v}(\mathbf{x})
[\boldsymbol{\delta},\ldots,\boldsymbol{\delta}] \right\|.
\end{align}

Because $\left(\frac{2^{1-1}-1}{1!}\right)=0$, the linear Jacobian term vanishes. Therefore, the numerator contains all nonlinear terms except for the linear term.

\section{Invariance of the particle-splitting error}
\label{app:app3}
Here, we show that the error introduced by particle splitting \eqref{eq:final_particles} is identical to the error obtained from the corresponding $1$-dimensional particle splitting optimization problem \eqref{eq:split_opt}. Let us consider the random vector $\vec{y} = \mathbf{T}\vec{x}= \begin{pmatrix} y_1 \\ \vec{y}_{\perp} \end{pmatrix} \sim \mathcal{N}(\vec{0}, \mat{\Sigma}_\vec{y})$ \eqref{eq:y_partition}, where $y_1 \in \mathbb{R}$, $\vec{y}_{\perp} \in \mathbb{R}^{n-1}$, and the covariance matrix $\mat{\Sigma}_\vec{y} = \begin{pmatrix} \sigma_d^2 & \vec{0}^T \\ \vec{0} & \mat{\Sigma}_{\vec{y}, {\perp}} \end{pmatrix}$ \eqref{eq:sigma_prime_block}. 
The marginal distributions are Gaussian, with $y_1 \sim \mathcal{N}(0, \sigma_d^2)$ and $\vec{y}_{\perp} \sim \mathcal{N}(\vec{0}, \mat{\Sigma}_{\vec{y}, {\perp}})$. Since $y_1$ and $\vec{y}_{\perp}$ are jointly Gaussian and uncorrelated, they are independent. This allows for the factorization of the joint probability density function (PDF):
\begin{equation} \label{eq:joint_1}
    K(\vec{y}; \vec{0}, \mat{\Sigma}_{\vec{y}}) = K(y_1; 0, \sigma_d^2)K(\vec{y}_{\perp}; \vec{0}, \mat{\Sigma}_{\vec{y},\perp})
\end{equation}
Similarly, the PDF of a single split particle in the transformed space is: 
\begin{equation} \label{eq:joint_2}
    K(\vec{y}; \vec{0}, \mat{\Sigma}_{\vec{y},\text{split}}) = K(y_1; 0, \sigma_d^2/2)K(\vec{y}_{\perp}; \vec{0}, \mat{\Sigma}_{\vec{y},\perp})    
\end{equation}
The difference between the PDF of the parent particle $\vec{y}$ and the sum of the PDFs of the three child particles, denoted by $f_Y(\vec{y})$, is:
\begin{multline}
    f_Y(\vec{y}) := K(\vec{y}; \vec{0}, \mat{\Sigma}_{\vec{y}}) \\
    - \left[ (1-2w)K(\vec{y}; \vec{0}, \mat{\Sigma}_{\vec{y},\text{split}}) + w(K(\vec{y}; \vec{a}, \mat{\Sigma}_{\vec{y},\text{split}}) + K(\vec{y}; -\vec{a}, \mat{\Sigma}_{\vec{y},\text{split}})) \right]
\end{multline}
where $\vec{a}$ is $[a, 0, \dots, 0]^T$, $a=1.03332\sigma_d$, and $w=0.21921$. Then, the $\text{L}^1$ error, denoted by $\varepsilon_\vec{y}$, is given by:
\begin{equation}
    \varepsilon_\vec{y} := \int_{\mathbb{R}^{n}} |f_Y(\vec{y})|d\vec{y}
\end{equation}
Similarly, the $\text{L}^1$ error of the corresponding $1$-dimensional optimization problem, denoted by $\varepsilon_{\mathrm{single}}$, is given by:
\begin{multline} 
    \varepsilon_{\text{single}}:=\int_{\mathbb{R}} \left| K(x \mid 0, \sigma_d^2) \right. \\
    \left. - \left( (1-2w)K(x \mid 0, \frac{\sigma_d^2}{2}) + wK(x \mid -a, \frac{\sigma_d^2}{2}) + wK(x \mid a, \frac{\sigma_d^2}{2}) \right) \right|dx
\end{multline}
Note that we use the $\text{L}^1$ error here instead of the maximum difference error in \eqref{eq:split_opt} because it allows analytical error analysis, and the difference between the errors calculated using the parameter values obtained by solving \eqref{eq:split_opt} is negligible (approximately $10^{-5}$).

First, we show that $\varepsilon_\vec{y}=\varepsilon_{\text{single}}$, which implies that the high-dimensional reconstruction error is entirely determined by the corresponding $1$-dimensional particle splitting optimization problem. By \eqref{eq:joint_1}, \eqref{eq:joint_2}, and Fubini's theorem, we have:
\begin{equation} \label{eq:decoup_single}
\begin{split}
    \varepsilon_\vec{y} &= \int_{\mathbb{R}^{n}} |f_Y(\vec{y})|d\vec{y} \\
    &= \int_{\mathbb{R}^{n-1}}\int_{\mathbb{R}} \left| K(y_1; 0, \sigma_d^2) - \left[ (1-2w)K\left(y_1; 0, \frac{\sigma_d^2}{2}\right) + w(\dots) \right] \right| \\
    &\quad \times \left| K(\vec{y}_{\perp}; \vec{0}, \mat{\Sigma}_{\perp}) \right| dy_1 d\vec{y}_{\perp} \\
    &= \int_{\mathbb{R}^{n-1}}\left| K(\vec{y}_{\perp}; \vec{0}, \mat{\Sigma}_{\perp}) \right| d\vec{y}_{\perp} \\
    &\quad \times \int_{\mathbb{R}} \left| K(y_1; 0, \sigma_d^2) - \left[ (1-2w)K\left(y_1; 0, \frac{\sigma_d^2}{2}\right) + w(\dots) \right] \right| dy_1 \\
    &= 1 \times \varepsilon_{\text{single}} = \varepsilon_{\text{single}} 
\end{split}
\end{equation}

Next, we show that $\varepsilon_{\text{single}}$ is equal to the $\text{L}^1$ error computed for the random variable $\vec{x}$ in the original space, denoted by $\varepsilon_\vec{x}$. Because $\vec{y} = \mat{T}\vec{x}$, we have $d\vec{x} = |\det(\mat{T})|^{-1} d\vec{y}$, and the difference between the PDF of the parent particle and the sum of the PDFs of the three child particles satisfies $f_X(\vec{x}) = f_Y(\vec{y}) |\det(\mat{T})|$. Therefore,
\begin{align}
    \varepsilon_\vec{x}
    &:=\int_{\mathbb{R}^n} \left| f_X(\vec{x}) \right|d\vec{x} \nonumber\\
    &=\int_{\mathbb{R}^n} \left( \left| f_Y(\vec{y}) \right| \left|\det\mat{T}\right| \right) \left( \frac{1}{\left| \det\mat{T}\right|}d\vec{y} \right) \nonumber \\
    &=\int_{\mathbb{R}^n} \left| f_Y(\vec{y})\right|d\vec{y} \nonumber\\
    &=\varepsilon_\vec{y}=\varepsilon_{\text{single}} 
\end{align}
This provides a rationale for using the parameters $w$ and $a$, derived from \eqref{eq:split_opt}, to split a multivariate Gaussian distribution of $\vec{x}$.

\section{Derivation of the KL divergence upper bound}
\label{app:app4}

Here, we derive \eqref{eq:kl_bound1} and \eqref{eq:kl_bound_final}, presented in Sec. \ref{sec:pc} of the main text. Let
\[
    w_{\text{com}}=w_1+w_2,
    \qquad
    \pi_{\text{com}}(x)=w_{\text{com}}\mathcal{N}(x;\boldsymbol{\mu}_{\text{com}},\mathbf{\Sigma}_{\text{com}})
\]
denote the weighted density of the combined particle, where
\[
    \boldsymbol{\mu}_{\text{com}}=\frac{w_1\boldsymbol{\mu}_1+w_2\boldsymbol{\mu}_2}{w_{\text{com}}},
\]
and
\begin{equation}
\label{eq:app_combined_cov}
\begin{aligned}
    \mathbf{\Sigma}_{\text{com}}
    &=
    \frac{w_1}{w_{\text{com}}}
    \left[
    \mathbf{\Sigma}_1
    +
    (\boldsymbol{\mu}_1-\boldsymbol{\mu}_{\text{com}})
    (\boldsymbol{\mu}_1-\boldsymbol{\mu}_{\text{com}})^T
    \right]
    \\
    &\quad+
    \frac{w_2}{w_{\text{com}}}
    \left[
    \mathbf{\Sigma}_2
    +
    (\boldsymbol{\mu}_2-\boldsymbol{\mu}_{\text{com}})
    (\boldsymbol{\mu}_2-\boldsymbol{\mu}_{\text{com}})^T
    \right].
\end{aligned}
\end{equation}
Since direct computation of KL divergence \eqref{eq:kl_div} is intractable, a computable upper bound is employed instead. 
The upper bound is derived using the three lemmas.

\begin{lemma} \label{thm:lemma_1}
If $p_1(x)$, $p_2(x)$ and $q(x)$ are any PDFs over $d$-dimensions, $0 \le w \le 1$ and $\bar{w}=1-w$, then:
\begin{equation} \label{eq:theorem1}
    D_{\text{KL}}(wp_1 + \bar{w}q, wp_2 + \bar{w}q) \le w D_{\text{KL}}(p_1, p_2)
\end{equation}
\end{lemma}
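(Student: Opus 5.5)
The plan is to use the joint convexity of the KL divergence in its pair of arguments. Joint convexity is itself a consequence of the log-sum inequality: for nonnegative numbers $a_1,a_2,b_1,b_2$,
\[
(a_1+a_2)\log\frac{a_1+a_2}{b_1+b_2}\le a_1\log\frac{a_1}{b_1}+a_2\log\frac{a_2}{b_2}.
\]
We will apply this pointwise in $x$ with
\[
a_1=w\,p_1(x),\qquad a_2=\bar w\,q(x),\qquad b_1=w\,p_2(x),\qquad b_2=\bar w\,q(x).
\]

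First, we dispose of the degenerate cases. If $w=0$, both mixtures equal $q$, so the left-hand side of \eqref{eq:theorem1} is $0$. If $D_{\text{KL}}(p_1,p_2)=\infty$, there is nothing to prove. In the remaining cases we use the conventions $0\log(0/b)=0$ and $a\log(a/0)=\infty$ for $a>0$.

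Next, we substitute these choices into the log-sum inequality at each fixed $x$. Its left-hand side becomes the integrand of $D_{\text{KL}}(wp_1+\bar w q,\,wp_2+\bar w q)$. On the right-hand side, the weights $w$ cancel inside the first logarithm, giving $w\,p_1(x)\log\frac{p_1(x)}{p_2(x)}$. The second term is $\bar w\,q(x)\log 1=0$. Integrating over $\mathbb{R}^d$ then yields \eqref{eq:theorem1} directly.

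For completeness, we will also prove the log-sum inequality itself in one line. It follows from convexity of $t\mapsto t\log t$ through Jensen's inequality, using weights $b_i/(b_1+b_2)$ and points $t_i=a_i/b_i$.

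The main technical care lies in justifying the integration step. The integrand of the KL divergence can be negative, so we must check that splitting and integrating is legitimate. To handle this, we write the pointwise inequality as $f(x)\le g(x)$, where $g$ has an integrable negative part, because $\int p_1\log(p_1/p_2)$ is finite in the case considered. We also note that the negative part of $f$ is controlled in the same way, since $t\log t\ge -1/e$ on the region where $p_2$ and $q$ are bounded. If needed, we can instead use the standard trick of rewriting the KL integrand as $p\log(p/r)-p+r\ge 0$. This form has the same integral, because both $p$ and $r$ integrate to one, and its pointwise nonnegativity lets monotone integration apply cleanly. The pointwise log-sum inequality survives this rewriting, since the added linear terms match on both sides.
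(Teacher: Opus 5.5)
Your proof is correct. The paper states Lemma~\ref{thm:lemma_1} without proof, importing it from \cite{runnalls2007kullback}; your argument is the standard joint-convexity proof of this fact, namely the pointwise log-sum inequality in which the shared component contributes $\bar w q\log 1=0$, followed by integration.

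One minor correction to your integrability remark: the left integrand satisfies $(wp_1+\bar w q)\log\frac{wp_1+\bar w q}{wp_2+\bar w q}\ge -(wp_2+\bar w q)/e$ everywhere. Since $wp_2+\bar w q$ is integrable, no boundedness of $p_2$ or $q$ is needed. In any case, your rewriting as $p\log(p/r)-p+r\ge 0$ already makes the integration step airtight.
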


\begin{lemma} \label{thm:lemma_2}
If $p(x)$, $q_1(x)$ and $q_2(x)$ are any PDFs over $d$-dimensions, $0 \le w \le 1$ and $\bar{w}=1-w$, then:
\begin{align} 
    D_{\text{KL}}(wq_1 + \bar{w}q_2, p) &\le w D_{\text{KL}}(q_1, p) + \bar{w} D_{\text{KL}}(q_2, p) \label{eq:theorem2a} \\
    D_{\text{KL}}(p, wq_1 + \bar{w}q_2) &\le w D_{\text{KL}}(p, q_1) + \bar{w} D_{\text{KL}}(p, q_2) \label{eq:theorem2b}
\end{align}
\end{lemma}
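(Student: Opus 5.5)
The statement immediately above is Lemma~\ref{thm:lemma_2}, the convexity of the KL divergence in each argument separately. I will prove both inequalities pointwise and then integrate.

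\textbf{Inequality (\ref{eq:theorem2b}).} Here the mixture sits in the second argument. Write
\[
D_{\text{KL}}(p, wq_1+\bar w q_2)=\int p\log p\,dx-\int p\log(wq_1+\bar w q_2)\,dx .
\]
Since $\log$ is concave, $\log(wq_1+\bar w q_2)\ge w\log q_1+\bar w\log q_2$ holds pointwise. Multiplying by $p\ge 0$ and integrating gives
\[
-\int p\log(wq_1+\bar w q_2)\,dx \le -w\int p\log q_1\,dx-\bar w\int p\log q_2\,dx .
\]
Splitting $\int p\log p = w\int p\log p+\bar w\int p\log p$ then yields exactly $wD_{\text{KL}}(p,q_1)+\bar wD_{\text{KL}}(p,q_2)$.

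\textbf{Inequality (\ref{eq:theorem2a}).} Here the mixture sits in the first argument. Write
\[
D_{\text{KL}}(wq_1+\bar wq_2,p)=\int \phi(wq_1+\bar wq_2)\,dx-\int (wq_1+\bar wq_2)\log p\,dx,
\qquad \phi(t)=t\log t .
\]
The cross term is linear in the mixture, so it splits exactly into $w$ and $\bar w$ parts. The function $\phi$ is convex on $[0,\infty)$, with $\phi''(t)=1/t>0$ and the convention $\phi(0)=0$. Hence $\phi(wq_1+\bar wq_2)\le w\phi(q_1)+\bar w\phi(q_2)$ pointwise, and integrating gives the claim.

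An equivalent route for (\ref{eq:theorem2a}) is the log-sum inequality applied at each $x$ to the pairs $(wq_1,wp)$ and $(\bar wq_2,\bar wp)$. This route in fact also proves the joint convexity behind Lemma~\ref{thm:lemma_1}.

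\textbf{Main obstacle.} The only delicate step is the measure-theoretic bookkeeping, namely justifying that the integrals can be split when some of them are infinite or have indefinite sign.

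I would first dispatch the trivial cases. If the right-hand side is $+\infty$, there is nothing to prove. Otherwise $D_{\text{KL}}(q_i,p)<\infty$ for both $i$ (or $D_{\text{KL}}(p,q_i)<\infty$ for the second inequality). This forces the absolute-continuity conditions: $q_i\ll p$ for (\ref{eq:theorem2a}), and $p\ll q_i$, hence $p\ll wq_1+\bar wq_2$, for (\ref{eq:theorem2b}).

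To avoid splitting integrals that need not converge separately, I would not split at all. Instead I would integrate the single pointwise inequality between integrands directly:
\[
p\log\frac{p}{wq_1+\bar wq_2}\le w\,p\log\frac{p}{q_1}+\bar w\,p\log\frac{p}{q_2}
\]
for (\ref{eq:theorem2b}), and the analogous inequality obtained from the log-sum inequality for (\ref{eq:theorem2a}). Because $\int p\,dx=\int q_i\,dx=1$, each KL integrand is bounded below by $q-p$-type terms, which are integrable. So every integral is well defined in $(-\infty,\infty]$, and integration preserves the pointwise inequality. The endpoint cases $w\in\{0,1\}$ hold with equality.
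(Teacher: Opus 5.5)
Your proof is correct. The paper does not prove this lemma: Appendix~D states it, together with Lemmas~\ref{thm:lemma_1} and~\ref{thm:lemma_3}, as a known fact borrowed from Runnalls' work. So there is no competing argument to compare against, and your proposal supplies justification the paper omits.

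Your route is the standard one, and it is rigorous in the extended-real sense:
\begin{itemize}
\item For the second argument, you use concavity of $\log$.
\item For the first argument, you use the log-sum inequality, or equivalently convexity of $t\log t$.
\item In both cases you then integrate a single pointwise inequality between integrands. Each integrand has an integrable negative part, because $f\log(f/g)\ge f-g\ge -g$ with $g$ the integrable second argument.
\end{itemize}

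In a final write-up, present that single-integrand version as the proof itself. Drop the preliminary splitting into $\int p\log p$, $\int \phi(q_i)$ and $\int q_i\log p$. Those pieces can diverge or be undefined even when all the divergences in the statement are finite, which you already noticed under ``main obstacle''. Also keep the separate treatment of $w\in\{0,1\}$, which avoids any $0\cdot\infty$ ambiguity on the right-hand side.

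Your side remark deserves to stay. Applied at each point, the log-sum inequality gives joint convexity of $D_{\text{KL}}$. Lemma~\ref{thm:lemma_1} then follows immediately from $D_{\text{KL}}(wp_1+\bar w q,\, wp_2+\bar w q)\le w D_{\text{KL}}(p_1,p_2)+\bar w D_{\text{KL}}(q,q)=wD_{\text{KL}}(p_1,p_2)$. So one argument covers both unproved lemmas that the KL bound in Appendix~D relies on.
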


\begin{lemma} \label{thm:lemma_3}
Let $g_1(x)$ and $g_2(x)$ be the $d$-dimensional Gaussian PDFs with mean vectors $\boldsymbol{\mu}_1, \boldsymbol{\mu}_2$ and positive-definite covariance matrices $\mathbf{\Sigma}_1, \mathbf{\Sigma}_2$, respectively. Then:
\begin{equation} \label{eq:gaussian_kl}
    2D_{\text{KL}}(g_1, g_2) = \mathrm{tr}(\mathbf{\Sigma}_2^{-1}[\mathbf{\Sigma}_1 - \mathbf{\Sigma}_2 + (\boldsymbol{\mu}_1 - \boldsymbol{\mu}_2)(\boldsymbol{\mu}_1 - \boldsymbol{\mu}_2)^T]) + \log\left(\frac{\det(\mathbf{\Sigma}_2)}{\det(\mathbf{\Sigma}_1)}\right)
\end{equation}
\end{lemma}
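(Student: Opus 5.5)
The proof is the standard Gaussian KL formula, so the plan is a direct computation. I would start from the definition
\[
D_{\text{KL}}(g_1,g_2)=\int g_1(x)\log g_1(x)\,dx-\int g_1(x)\log g_2(x)\,dx .
\]
The log-density of each Gaussian is written explicitly as
\[
\log g_j(x)=-\tfrac{d}{2}\log(2\pi)-\tfrac12\log\det(\mathbf{\Sigma}_j)-\tfrac12(x-\boldsymbol{\mu}_j)^T\mathbf{\Sigma}_j^{-1}(x-\boldsymbol{\mu}_j).
\]
The $(2\pi)$ constants cancel between the two integrals. The determinant terms give $\tfrac12\log\bigl(\det(\mathbf{\Sigma}_2)/\det(\mathbf{\Sigma}_1)\bigr)$, because $g_1$ integrates to one. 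What remains is to evaluate the expectations under $g_1$ of the two quadratic forms.

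For these expectations I will use the trace trick. For any symmetric $\mathbf{A}$ and any vector $\mathbf{m}$, if $x\sim\mathcal{N}(\boldsymbol{\mu}_1,\mathbf{\Sigma}_1)$ then
\[
\mathbb{E}\bigl[(x-\mathbf{m})^T\mathbf{A}(x-\mathbf{m})\bigr]=\mathrm{tr}(\mathbf{A}\,\mathbb{E}[(x-\mathbf{m})(x-\mathbf{m})^T])=\mathrm{tr}\bigl(\mathbf{A}[\mathbf{\Sigma}_1+(\boldsymbol{\mu}_1-\mathbf{m})(\boldsymbol{\mu}_1-\mathbf{m})^T]\bigr).
\]
This follows by writing $x-\mathbf{m}=(x-\boldsymbol{\mu}_1)+(\boldsymbol{\mu}_1-\mathbf{m})$; the cross terms vanish because the centred variable has zero mean.

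I then apply this identity twice. With $\mathbf{A}=\mathbf{\Sigma}_1^{-1}$ and $\mathbf{m}=\boldsymbol{\mu}_1$ it gives $\mathrm{tr}(\mathbf{I}_d)=d=\mathrm{tr}(\mathbf{\Sigma}_2^{-1}\mathbf{\Sigma}_2)$. With $\mathbf{A}=\mathbf{\Sigma}_2^{-1}$ and $\mathbf{m}=\boldsymbol{\mu}_2$ it gives $\mathrm{tr}\bigl(\mathbf{\Sigma}_2^{-1}[\mathbf{\Sigma}_1+(\boldsymbol{\mu}_1-\boldsymbol{\mu}_2)(\boldsymbol{\mu}_1-\boldsymbol{\mu}_2)^T]\bigr)$.

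Combining the pieces, with the $-\tfrac12$ quadratic contribution from $\log g_1$ and the $+\tfrac12$ contribution from $-\log g_2$, gives $\tfrac12$ times
\[
\mathrm{tr}\bigl(\mathbf{\Sigma}_2^{-1}[\mathbf{\Sigma}_1-\mathbf{\Sigma}_2+(\boldsymbol{\mu}_1-\boldsymbol{\mu}_2)(\boldsymbol{\mu}_1-\boldsymbol{\mu}_2)^T]\bigr)+\log\frac{\det(\mathbf{\Sigma}_2)}{\det(\mathbf{\Sigma}_1)},
\]
using linearity of the trace. Multiplying by $2$ yields \eqref{eq:gaussian_kl}.

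There is no real obstacle here. The only points needing care are the sign bookkeeping between the two log terms and the justification of the second-moment identity. Positive definiteness guarantees that the inverses and log-determinants exist. Gaussian tails guarantee that all the integrals are finite.
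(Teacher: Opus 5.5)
Your computation is correct and complete. Expanding both log-densities, cancelling the $(2\pi)$ constants and evaluating the two quadratic forms via $\mathbb{E}\bigl[(x-\mathbf{m})^T\mathbf{A}(x-\mathbf{m})\bigr]=\mathrm{tr}\bigl(\mathbf{A}[\mathbf{\Sigma}_1+(\boldsymbol{\mu}_1-\mathbf{m})(\boldsymbol{\mu}_1-\mathbf{m})^T]\bigr)$ gives exactly the stated formula once $d$ is rewritten as $\mathrm{tr}(\mathbf{\Sigma}_2^{-1}\mathbf{\Sigma}_2)$. Symmetry of $\mathbf{A}$ is not even needed for that trace identity.

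The paper gives no proof of this lemma to compare against. It quotes the formula as a standard fact, following Runnalls, and uses it only to expand the KL upper bound $\mathcal{B}(\mathbf{x}_1,\mathbf{x}_2)$. Your derivation is the standard one that fills this in.
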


To apply Lemma~\ref{thm:lemma_1}, we let $w=w_{\text{com}}=w_1+w_2$ and $p_1=\frac{w_1\pi_1+w_2\pi_2}{w_{\text{com}}}$. Note that the term $\bar{w}q$ in equation \eqref{eq:theorem1} corresponds to the weighted sum of residual particles $\pi_{\text{res}}$. Setting $p_2$ to $\frac{\pi_{\text{com}}}{w_{\text{com}}}$ yields
\begin{equation} \label{eq:app_kl_bound_step1}
    D_{\text{KL}}(w_1\pi_1+w_2\pi_2+\pi_{\text{res}}, \pi_{\text{com}}+\pi_{\text{res}}) \le w_{\text{com}} D_{\text{KL}}\left(\frac{w_1\pi_1+w_2\pi_2}{w_{\text{com}}}, \frac{\pi_{\text{com}}}{w_{\text{com}}}\right).
\end{equation}
Next, by substituting $w$ with $\frac{w_1}{w_{\text{com}}}$, $q_1$ with $\pi_1$, $q_2$ with $\pi_2$, and $p$ with $\frac{\pi_{\text{com}}}{w_{\text{com}}}$ into equation \eqref{eq:theorem2a} of Lemma~\ref{thm:lemma_2}, we obtain
\begin{equation} \label{eq:app_kl_bound_step2}
    D_{\text{KL}}\left(\frac{w_1\pi_1+w_2\pi_2}{w_{\text{com}}}, \frac{\pi_{\text{com}}}{w_{\text{com}}}\right) \le \frac{w_1}{w_{\text{com}}} D_{\text{KL}}\left(\pi_1, \frac{\pi_{\text{com}}}{w_{\text{com}}}\right) + \frac{w_2}{w_{\text{com}}} D_{\text{KL}}\left(\pi_2, \frac{\pi_{\text{com}}}{w_{\text{com}}}\right).
\end{equation}
Combining equations \eqref{eq:app_kl_bound_step1} and \eqref{eq:app_kl_bound_step2} derives the desired upper bound for the KL divergence, denoted as $\mathcal{B}(\mathbf{x}_1,\mathbf{x}_2)$.
\begin{equation}
\label{eq:app_kl_upper_bound}
\begin{aligned}
    &D_{\text{KL}}
    (w_1\pi_1+w_2\pi_2+\pi_{\text{res}}, \pi_{\text{com}}+\pi_{\text{res}})
    \\
    &\quad\le
    w_1 D_{\text{KL}} \left(\pi_1, \frac{\pi_{\text{com}}}{w_{\text{com}}}\right)+w_2 D_{\text{KL}}\left(\pi_2,\frac{\pi_{\text{com}}}{w_{\text{com}}}\right)
    \\
    &\quad:=\mathcal{B}(\mathbf{x}_1,\mathbf{x}_2).
\end{aligned}
\end{equation}
Substituting equation \eqref{eq:gaussian_kl} from Lemma~\ref{thm:lemma_3} into $\mathcal{B}(\mathbf{x}_1,\mathbf{x}_2)$ results in
\begin{equation}
\label{eq:app_kl_bound_expanded}
\begin{aligned}
    2\mathcal{B}(\mathbf{x}_1,\mathbf{x}_2)
    &=w_1\bigg[\mathrm{tr}\bigg(\mathbf{\Sigma}_{\text{com}}^{-1}\bigg[\mathbf{\Sigma}_1 -\mathbf{\Sigma}_{\text{com}} +(\boldsymbol{\mu}_1-\boldsymbol{\mu}_{\text{com}}) (\boldsymbol{\mu}_1-\boldsymbol{\mu}_{\text{com}})^T\bigg]\bigg)
    \\
    &\qquad\qquad+\log\left(\frac{\det(\mathbf{\Sigma}_{\text{com}})}{\det(\mathbf{\Sigma}_1)}\right)\bigg]
    \\
    &\quad+w_2\bigg[\mathrm{tr}\bigg(\mathbf{\Sigma}_{\text{com}}^{-1}\bigg[\mathbf{\Sigma}_2 - \mathbf{\Sigma}_{\text{com}}+(\boldsymbol{\mu}_2-\boldsymbol{\mu}_{\text{com}})(\boldsymbol{\mu}_2-\boldsymbol{\mu}_{\text{com}})^T\bigg]\bigg)
    \\
    &\qquad\qquad+\log\left(\frac{\det(\mathbf{\Sigma}_{\text{com}})}{\det(\mathbf{\Sigma}_2)}\right)\bigg].
\end{aligned}
\end{equation}
Using the linearity of the trace operator, we can consolidate all trace-related terms from \eqref{eq:app_kl_bound_expanded} into the single expression, which is shown in Claim 1 below to be zero.

\begin{claim}
The following trace term is zero:
\begin{align*}
    \mathrm{tr}\Biggl(\mathbf{\Sigma}_{\text{com}}^{-1}\Biggl[ &w_1\left((\mathbf{\Sigma}_1-\mathbf{\Sigma}_{\text{com}}) + (\boldsymbol{\mu}_1-\boldsymbol{\mu}_{\text{com}})(\boldsymbol{\mu}_1-\boldsymbol{\mu}_{\text{com}})^T\right) \\
    & + w_2\left((\mathbf{\Sigma}_2-\mathbf{\Sigma}_{\text{com}}) + (\boldsymbol{\mu}_2-\boldsymbol{\mu}_{\text{com}})(\boldsymbol{\mu}_2-\boldsymbol{\mu}_{\text{com}})^T\right)\Biggr]\Biggr) = 0.
\end{align*}
\end{claim}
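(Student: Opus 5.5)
The plan is to show that the matrix inside the trace is itself the zero matrix. The trace then vanishes trivially, without using any property of $\mathbf{\Sigma}_{\text{com}}^{-1}$ beyond its existence.

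Using linearity, we split the bracketed matrix into three groups. The first is the covariance part $w_1\mathbf{\Sigma}_1+w_2\mathbf{\Sigma}_2$. The second is the mean-spread part
\[
w_1(\boldsymbol{\mu}_1-\boldsymbol{\mu}_{\text{com}})(\boldsymbol{\mu}_1-\boldsymbol{\mu}_{\text{com}})^T+w_2(\boldsymbol{\mu}_2-\boldsymbol{\mu}_{\text{com}})(\boldsymbol{\mu}_2-\boldsymbol{\mu}_{\text{com}})^T .
\]
The third is the term $-(w_1+w_2)\mathbf{\Sigma}_{\text{com}}=-w_{\text{com}}\mathbf{\Sigma}_{\text{com}}$.

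Next, multiply the definition \eqref{eq:app_combined_cov} of $\mathbf{\Sigma}_{\text{com}}$ by $w_{\text{com}}$. This gives
\[
w_{\text{com}}\mathbf{\Sigma}_{\text{com}} = w_1\left[\mathbf{\Sigma}_1+(\boldsymbol{\mu}_1-\boldsymbol{\mu}_{\text{com}})(\boldsymbol{\mu}_1-\boldsymbol{\mu}_{\text{com}})^T\right]+w_2\left[\mathbf{\Sigma}_2+(\boldsymbol{\mu}_2-\boldsymbol{\mu}_{\text{com}})(\boldsymbol{\mu}_2-\boldsymbol{\mu}_{\text{com}})^T\right],
\]
which is exactly the sum of the first two groups. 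Substituting it, the three groups cancel identically and the bracket is $\mathbf{0}$. Hence $\mathrm{tr}(\mathbf{\Sigma}_{\text{com}}^{-1}\mathbf{0})=0$.

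The only point needing care is matching the weights. The combined covariance is normalized by $w_{\text{com}}$, while the bracket carries unnormalized weights $w_1,w_2$. Collecting the two $-w_i\mathbf{\Sigma}_{\text{com}}$ terms into $-w_{\text{com}}\mathbf{\Sigma}_{\text{com}}$ resolves this. We also note that $\mathbf{\Sigma}_{\text{com}}$ is invertible: it is positive definite because $\mathbf{\Sigma}_1,\mathbf{\Sigma}_2$ are positive definite, as assumed in Lemma~\ref{thm:lemma_3}, and the added rank-one terms are positive semidefinite.

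With the Claim established, the trace terms in \eqref{eq:app_kl_bound_expanded} drop out. Only the log-determinant terms remain, and they combine directly into \eqref{eq:kl_bound_final}.
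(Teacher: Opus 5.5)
Your proof is correct and follows the same strategy as the paper: show that the bracketed matrix vanishes identically because it contains $w_{\text{com}}\mathbf{\Sigma}_{\text{com}}$ minus itself. The paper takes a small detour, rewriting $\boldsymbol{\mu}_i-\boldsymbol{\mu}_{\text{com}}$ in terms of $\boldsymbol{\mu}_1-\boldsymbol{\mu}_2$ so that both sides carry $\frac{w_1w_2}{w_{\text{com}}}(\boldsymbol{\mu}_1-\boldsymbol{\mu}_2)(\boldsymbol{\mu}_1-\boldsymbol{\mu}_2)^T$, whereas you read the cancellation directly off \eqref{eq:app_combined_cov}, which is slightly cleaner.
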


\begin{proof}

Let $\mathbf{A}$ denote the matrix term inside the trace term in Claim 1:
\begin{multline} \label{eq:app_matrix_A_def}
    \mathbf{A} = w_1\left[(\mathbf{\Sigma}_1-\mathbf{\Sigma}_{\text{com}}) + (\boldsymbol{\mu}_1-\boldsymbol{\mu}_{\text{com}})(\boldsymbol{\mu}_1-\boldsymbol{\mu}_{\text{com}})^T\right] \\
    + w_2\left[(\mathbf{\Sigma}_2-\mathbf{\Sigma}_{\text{com}})+ (\boldsymbol{\mu}_2-\boldsymbol{\mu}_{\text{com}})(\boldsymbol{\mu}_2-\boldsymbol{\mu}_{\text{com}})^T\right]
\end{multline}
We will show that $\mathbf{A} = \mathbf{0}$, which implies $\mathrm{tr}(\mathbf{\Sigma}_{\text{com}}^{-1}\mathbf{A}) = 0$.

First, we simplify the mean difference terms. Recall the definition of the combined weight and mean \eqref{eq:app_combined_cov}:
\begin{equation*}
    w_{\text{com}} = w_1 + w_2, \quad \boldsymbol{\mu}_{\text{com}} = \frac{w_1 \boldsymbol{\mu}_1 + w_2 \boldsymbol{\mu}_2}{w_{\text{com}}}
\end{equation*}
Then, we have:
\begin{align}
    \boldsymbol{\mu}_1 - \boldsymbol{\mu}_{\text{com}} &= \boldsymbol{\mu}_1 - \frac{w_1 \boldsymbol{\mu}_1 + w_2 \boldsymbol{\mu}_2}{w_{\text{com}}} \nonumber \\
    &= \frac{(w_1 + w_2)\boldsymbol{\mu}_1 - w_1 \boldsymbol{\mu}_1 - w_2 \boldsymbol{\mu}_2}{w_{\text{com}}} \nonumber \\
    &= \frac{w_2(\boldsymbol{\mu}_1 - \boldsymbol{\mu}_2)}{w_{\text{com}}}
\end{align}
\begin{align}
    \boldsymbol{\mu}_2 - \boldsymbol{\mu}_{\text{com}} &= \boldsymbol{\mu}_2 - \frac{w_1 \boldsymbol{\mu}_1 + w_2 \boldsymbol{\mu}_2}{w_{\text{com}}} \nonumber \\
    &= \frac{(w_1 + w_2)\boldsymbol{\mu}_2 - w_1 \boldsymbol{\mu}_1 - w_2 \boldsymbol{\mu}_2}{w_{\text{com}}} \nonumber \\
    &= \frac{-w_1(\boldsymbol{\mu}_1 - \boldsymbol{\mu}_2)}{w_{\text{com}}}
\end{align}
Let $\boldsymbol{\delta}_{12} = (\boldsymbol{\mu}_1 - \boldsymbol{\mu}_2)$. Substituting these back into the mean difference terms in equation~\eqref{eq:app_matrix_A_def}:
\begin{align}
    w_1 (\boldsymbol{\mu}_1 - \boldsymbol{\mu}_{\text{com}})(\boldsymbol{\mu}_1 - \boldsymbol{\mu}_{\text{com}})^T &= w_1 \left( \frac{w_2}{w_{\text{com}}} \boldsymbol{\delta}_{12} \right) \left( \frac{w_2}{w_{\text{com}}} \boldsymbol{\delta}_{12} \right)^T = \frac{w_1 w_2^2}{w_{\text{com}}^2} \boldsymbol{\delta}_{12}\boldsymbol{\delta}_{12}^T \\
    w_2 (\boldsymbol{\mu}_2 - \boldsymbol{\mu}_{\text{com}})(\boldsymbol{\mu}_2 - \boldsymbol{\mu}_{\text{com}})^T &= w_2 \left( \frac{-w_1}{w_{\text{com}}} \boldsymbol{\delta}_{12} \right) \left( \frac{-w_1}{w_{\text{com}}} \boldsymbol{\delta}_{12} \right)^T = \frac{w_2 w_1^2}{w_{\text{com}}^2} \boldsymbol{\delta}_{12}\boldsymbol{\delta}_{12}^T
\end{align}
Summing these two parts yields:
\begin{align}
    \left( \frac{w_1 w_2^2 + w_2 w_1^2}{w_{\text{com}}^2} \right) \boldsymbol{\delta}_{12}\boldsymbol{\delta}_{12}^T = \frac{w_1 w_2 (w_2 + w_1)}{w_{\text{com}}^2} \boldsymbol{\delta}_{12}\boldsymbol{\delta}_{12}^T = \frac{w_1 w_2}{w_{\text{com}}} \boldsymbol{\delta}_{12}\boldsymbol{\delta}_{12}^T
\end{align}
Now, we can rewrite the matrix $\mathbf{A}$:
\begin{align}
    \mathbf{A} &= (w_1 \mathbf{\Sigma}_1 + w_2 \mathbf{\Sigma}_2) - w_{\text{com}}\mathbf{\Sigma}_{\text{com}} + \frac{w_1 w_2}{w_{\text{com}}}\boldsymbol{\delta}_{12}\boldsymbol{\delta}_{12}^T
\end{align}
Finally, we utilize the definition of the combined covariance $\mathbf{\Sigma}_{\text{com}}$:
\begin{equation}
    \mathbf{\Sigma}_{\text{com}} = \frac{1}{w_{\text{com}}} \left( w_1 \mathbf{\Sigma}_1 + w_2 \mathbf{\Sigma}_2 + \frac{w_1 w_2}{w_{\text{com}}} \boldsymbol{\delta}_{12}\boldsymbol{\delta}_{12}^T \right)
\end{equation}
Multiplying both sides by $w_{\text{com}}$, we get:
\begin{equation} \label{eq:app_wcom_Sigmacom}
    w_{\text{com}}\mathbf{\Sigma}_{\text{com}} = w_1 \mathbf{\Sigma}_1 + w_2 \mathbf{\Sigma}_2 + \frac{w_1 w_2}{w_{\text{com}}} \boldsymbol{\delta}_{12}\boldsymbol{\delta}_{12}^T
\end{equation}
Substituting \eqref{eq:app_wcom_Sigmacom} into the expression for $\mathbf{A}$:
\begin{equation}
    \mathbf{A} = \left( w_1 \mathbf{\Sigma}_1 + w_2 \mathbf{\Sigma}_2 + \frac{w_1 w_2}{w_{\text{com}}} \boldsymbol{\delta}_{12}\boldsymbol{\delta}_{12}^T \right) - w_{\text{com}}\mathbf{\Sigma}_{\text{com}} = \mathbf{0}
\end{equation}
Since $\mathbf{A} = \mathbf{0}$, it follows immediately that $\mathrm{tr}(\mathbf{\Sigma}_{\text{com}}^{-1}\mathbf{A}) = 0$. 
\end{proof}

Therefore, $\mathcal{B}(\mathbf{x}_1,\mathbf{x}_2)$ simplifies to:
\begin{equation*}
    \mathcal{B}(\mathbf{x}_1,\mathbf{x}_2) = \frac{1}{2}\left[(w_1+w_2)\log \det(\mathbf{\Sigma}_{\text{com}}) - w_1\log \det(\mathbf{\Sigma}_1) - w_2\log \det(\mathbf{\Sigma}_2)\right].
\end{equation*}
This proves \eqref{eq:kl_bound_final}, and together with \eqref{eq:app_kl_upper_bound}, proves \cref{eq:kl_bound1}.










\end{document}